  \newcommand{\bb}{\mathfrak{B}}
\newcommand{\bc}{\underline{c}}  \newcommand{\bi}{\underline{i}}
\newcommand{\bj}{\underline{j}}  \newcommand\bk{\underline{b}}
  \newcommand{\bU}{\rm U}
\newcommand{\BS}{\mathfrak S}
\newcommand{\CM}{\mathcal M}    
\newcommand{\fs}{\mathfrak s}   
\newcommand{\fu}{\mathfrak u}
\newcommand{\ft}{\mathfrak t}
    \newcommand{\lam}{\lambda}
\newcommand{\mU}{\mathbf U}
\newcommand{\PP}{\mathcal P}
\newcommand{\Q}{\mathbb Q}
\newcommand{\T}{\mathcal T}
\newcommand{\ui}{\underline i}
\newcommand{\uk}{\underline k}
\newcommand{\uj}{\underline j}
\newcommand{\Z}{\mathbb Z}
\DeclareMathOperator{\ann}{ann} \DeclareMathOperator{\BD}{Bd}
\DeclareMathOperator{\ch}{char}  \DeclareMathOperator{\diag}{diag}
\DeclareMathOperator{\End}{End} \DeclareMathOperator{\Ext}{Ext}
 \DeclareMathOperator{\Hom}{Hom}
\DeclareMathOperator{\id}{id} \DeclareMathOperator{\Ind}{Ind}
 \DeclareMathOperator{\Ker}{Ker}
\DeclareMathOperator{\Res}{Res} \DeclareMathOperator{\RS}{RowStd}
\DeclareMathOperator{\SGN}{SGN} \DeclareMathOperator{\res}{res}
\DeclareMathOperator{\Std}{Std} \DeclareMathOperator{\Span}{Span}
\DeclareMathOperator{\fS}{S} \DeclareMathOperator{\fT}{T}
\numberwithin{equation}{section}
\newtheorem{thm}[equation]{Theorem}
\newtheorem{prop}[equation]{Proposition}
\newtheorem{lem}[equation]{Lemma}
\newtheorem{cor}[equation]{Corollary}
\title[Schur--Weyl duality]{Schur--Weyl duality for orthogonal groups}
\author{Stephen Doty and Jun Hu}
\begin{document}
\maketitle

\begin{abstract}
We prove Schur--Weyl duality between the Brauer algebra $\bb_n(m)$ and
the orthogonal group $O_{m}(K)$ over an arbitrary infinite field $K$
of odd characteristic. If $m$ is even, we show that each connected
component of the orthogonal monoid is a normal variety; this implies
that the orthogonal Schur algebra associated to the identity component
is a generalized Schur algebra. As an application of the main result,
an explicit and characteristic-free description of the annihilator of
$n$-tensor space $V^{\otimes n}$ in the Brauer algebra $\bb_n(m)$
is also given.
\end{abstract}

\section{Introduction}

Let $m, n\in\mathbb{N}$.  Write $\lambda
\vdash n$ to mean that $\lambda = (\lambda_1, \lambda_2, \dots)$ is a
partition of $n$, and denote by $\ell(\lam)$ the largest integer $i$
such that $\lam_i\neq 0$.

Let $K$ be an infinite field and $V$ an $m$-dimensional $K$-vector
space. The natural left action of the general linear group $GL(V)$ on
$V^{\otimes n}$ commutes with the right permutation action of the
symmetric group $\BS_n$. Let $\varphi, \psi$ be the corresponding
natural representations $$
\varphi:(K\BS_n)^{\mathrm{op}}\rightarrow\End_{K}\bigl(V^{\otimes
  n}\bigr),\quad \psi:KGL(V)\rightarrow\End_{K}\bigl(V^{\otimes
  n}\bigr), $$ respectively. The well-known Schur--Weyl duality (see
\cite{CL}, \cite{DP}, \cite{KSX}, \cite{Sc}, \cite{W}) says that
\begin{enumerate}
\item[(a)]
  $\varphi\bigl((K\BS_n)^{\mathrm{op}}\bigr)=\End_{KGL(V)}\bigl(V^{\otimes
  n}\bigr)$, and if $m\geq n$ then $\varphi$ is injective, and hence
  an isomorphism onto $\End_{KGL(V)}\bigl(V^{\otimes n}\bigr)$,
\item[(b)] $\psi\bigl(KGL(V)\bigr)=\End_{K\BS_n}\bigl(V^{\otimes n}\bigr)$,
\item[(c)] if $\ch K=0$, then there is an irreducible
  $KGL(V)$-$K\BS_n$-bimodule decomposition $$ V^{\otimes
  n}=\bigoplus_{\substack{\lam=(\lam_1,\lam_2,\cdots)\vdash
    n\\ \ell(\lam)\leq m}} {\Delta}_{\lam}\otimes S^{\lam},$$ where
  ${\Delta}_{\lam}$ (resp., $S^{\lam}$) denotes the irreducible
  $KGL(V)$-module (resp., irreducible $K\BS_n$-module) associated to
  $\lam$.
\end{enumerate}

There are also Schur--Weyl dualities for symplectic groups and
orthogonal groups in the semisimple case, i.e.,\ when $K$ has
characteristic zero; see \cite{B}, \cite{B1} and \cite{B2}. In these
cases, the symmetric group will be replaced by certain specialized
Brauer algebras. We are mostly interested in the non-semisimple
case. In \cite{DDH}, Schur--Weyl duality between the Brauer algebra
$\bb_n(-2m)$ and the symplectic group ${\rm Sp}_{2m}(K)$ over an
arbitrary infinite field $K$ was proved. In \cite{Hu2}, the second
author gave an explicit and characteristic-free description of the
annihilator of $n$-tensor space $V^{\otimes n}$ in the Brauer
algebra $\bb_n(-2m)$.\smallskip

The aim of this work is to generalize these results to the
orthogonal case. We first recall the definition of orthogonal group
over an arbitrary infinite field $K$ with $\ch K\neq 2$. Let $V$ be
an $m$-dimensional $K$-vector space with a non-degenerate symmetric
bilinear form $(\,,)$. Then the orthogonal similitude group (resp.,
orthogonal group) relative to $(\,,)$ is
$$ GO(V):=\biggl\{g\in GL(V)\biggm|\begin{matrix}\text{$\exists\, 0\neq d\in
K$, such that $(gv,gw)=d(v,w)$}\\
\text{$\forall\,v,w\in V$}\end{matrix}\biggr\}
$$
$\Bigl($resp., $O(V):=\Bigl\{g\in GL(V)\Bigm|(gv,
gw)=(v,w),\,\,\forall\,\,v,w\in V\Bigr\}.\,\,\Bigr) $\smallskip

By restriction from $GL(V)$, we get natural left actions of
$GO(V)$ and $O(V)$ on $V^{\otimes n}$. Note that if $0\neq d\in K$
is such that $(gv,gw)=d(v,w)$ for any $v,w\in V$, then
$\bigl((\sqrt{d^{-1}}g)v,(\sqrt{d^{-1}}g)w\bigr)=(v,w)$ for any
$v,w\in V$. Therefore, if $K$ is large enough such that
$\sqrt{d}\in K$ for any $d\in K$, then $g\in GO(V)$ implies that
$(a\id_{V})g \in O(V)$ for some $0\neq a\in K$. In that case,
$$\begin{aligned}
\psi(g)&=\psi\bigl((a^{-1}\id_{V})(a\id_{V})g\bigr)=\psi\bigl(a^{-1}\id_{V}\bigr)\psi\bigl((a\id_{V})g\bigr)\\
&=\bigl(a^{-n}\id_{V^{\otimes n
}}\bigr)\psi\bigl((a\id_{V})g\bigr)=a^{-n}\psi\bigl((a\id_{V})g\bigr).\end{aligned}
$$
It follows that
$$ \psi\bigl(K O(V)\bigr)=\psi\bigl(K GO(V)\bigr)$$
provided $K$ is closed under square roots.\smallskip

We now recall the definition of Brauer algebra. Let $x$ be an
indeterminate over $\Z$. The Brauer algebra $\bb_n(x)$ over $\Z[x]$ is
a unital $\Z[x]$-algebra with generators
$s_1,\cdots,s_{n-1},e_1,\cdots,e_{n-1}$ and relations (see
\cite{E}):
$$\begin{matrix}s_i^2=1,\,\,e_i^2=xe_i,\,\,e_is_i=e_i=s_ie_i,
\quad\forall\,1\leq i\leq n-1,\\
s_is_j=s_js_i,\,\,s_ie_j=e_js_i,\,\,e_ie_j=e_je_i,\quad\forall\,1\leq
i<j-1\leq n-2,\\ s_is_{i+1}s_i=s_{i+1}s_is_{i+1},\,\,
e_ie_{i+1}e_i=e_i,\,\,
e_{i+1}e_ie_{i+1}=e_{i+1},\,\,\forall\,1\leq
i\leq n-2,\\
s_ie_{i+1}e_i=s_{i+1}e_i,\,\,e_{i+1}e_is_{i+1}=e_{i+1}s_i,\quad\forall\,1\leq
i\leq n-2.\end{matrix}
$$
$\bb_n(x)$ is a free $\Z[x]$-module with rank $(2n-1)\cdot
(2n-3)\cdots 3\cdot 1$. For any commutative $\Z[x]$-algebra $R$ with
$x$ specialized to $\delta\in R$, we define
$\bb_n(\delta)_{R}:=R\otimes_{\Z[x]}\bb_n(x)$. This algebra was
first introduced by Richard Brauer (see \cite{B}) in order to
describe how the $n$-tensor space $V^{\otimes{n}}$ decomposes into
irreducible modules over the orthogonal group $O(V)$ or the
symplectic group $Sp(V)$, where $V$ is an orthogonal or symplectic
vector space. In Brauer's original formulation, the algebra
$\bb_n(x)$ was defined as the complex linear space with basis the
set $\BD_n$ of all Brauer $n$-diagrams, graphs on $2n$ vertices and
$n$ edges with the property that every vertex is incident to
precisely one edge. The multiplication of two Brauer $n$-diagrams is
defined using natural concatenation of diagrams. For more details,
we refer the readers to \cite{GW} and \cite{Hu2}. Note that the
subalgebra of $\bb_n(x)$ generated by $s_1,s_2,\cdots,s_{n-1}$ is
isomorphic to the group algebra of the symmetric group $\BS_{n}$
over $\Z[x]$.\smallskip

The Brauer algebra has been studied in a number of references, e.g.,
\cite{B}, \cite{B1}, \cite{B2}, \cite{CDM}, \cite{DDH}, \cite{DP},
\cite{FG}, \cite{Ga1}, \cite{Ga2}, \cite{Hu2}, \cite{HW1},
\cite{HW2}, \cite{LP}, \cite{Wz}. To set up a Schur--Weyl duality
for orthogonal groups, we only need certain specialized Brauer
algebras which we now recall. Let
$\bb_n(m):=\Z\otimes_{\Z[x]}\bb_n(x)$, where $\Z$ is regarded as
$\Z[x]$-algebra by specifying $x$ to $m$. Let
$\bb_n(m)_K:=K\otimes_{\Z}\bb_n(m)$, where $K$ is regarded as
$\Z$-algebra in the natural way. Then there is a right action of the
specialized Brauer algebra $\bb_n(m)_K$ on the $n$-tensor space
$V^{\otimes n}$ which commutes with the natural left action of
$GO(V)$. We recall the definition of this action. Let $\delta_{i,j}$
denote the value of the usual Kronecker delta. For any integer $i$
with $1\leq i\leq m$, we set $i'=m+1-i$. We fix an ordered basis
$\bigl\{v_1,v_2,\cdots,v_{m}\bigr\}$ of $V$ such that
$$ (v_i, v_{j})=\delta_{i, j'},\quad\forall\,\,1\leq i, j\leq m.$$
The right action of $\bb_n(m)$ on $V^{\otimes n}$ is defined on
generators by
$$\begin{aligned} (v_{i_1}\otimes\cdots\otimes
v_{i_n})s_j&:=v_{i_1}\otimes\cdots\otimes v_{i_{j-1}}\otimes
v_{i_{j+1}}\otimes v_{i_{j}}\otimes v_{i_{j+2}}
\otimes\cdots\otimes v_{i_n},\\
(v_{i_1}\otimes\cdots\otimes v_{i_n})e_j&:=\delta_{i_{j},i'_{j+1}}
v_{i_1}\otimes\cdots\otimes v_{i_{j-1}}\otimes\biggl(
\sum_{k=1}^{m}v_{k}\otimes v_{k'}\biggr)\otimes v_{i_{j+2}}\\
& \qquad\qquad\otimes\cdots\otimes v_{i_n}.\end{aligned}
$$
That is, the action of $s_j$ is by place permutation and the action
of $e_j$ is by a composition of Weyl's ``contraction'' operator with
an ``expansion'' operator. Let $\varphi$ be the $K$-algebra
homomorphism $$ \varphi:\,\,(\bb_n(m))^{\rm
op}\rightarrow\End_{K}\bigl(V^{\otimes n}\bigr)
$$
induced by the above action.

\begin{lem} {\rm (\cite{B}, \cite{B1}, \cite{B2})} \label{B1}
1) The natural left action of $GO(V)$ on $V^{\otimes
n}$ commutes with the right action of $\bb_n(m)$. Moreover, if
$K=\mathbb{C}$, then
$$\begin{aligned}
\varphi\bigl(\bb_n(m)_{\mathbb{C}}^{\mathrm{op}}\bigr)&=\End_{\mathbb{C}
GO(V)}\bigl(V^{\otimes n}\bigr)=\End_{\mathbb{C}
O(V)}\bigl(V^{\otimes n}\bigr),\\
\psi\bigl(\mathbb{C}GO(V)\bigr)&=\psi\bigl(\mathbb{C}
O(V)\bigr)=\End_{\bb_n(m)_{\mathbb{C}}}\bigl(V^{\otimes
n}\bigr),\end{aligned}
$$

2)  if $K=\mathbb{C}$ and $m\geq n$ then $\varphi$ is injective,
and hence an isomorphism onto
$\End_{\mathbb{C}GO(V)}\bigl(V^{\otimes n}\bigr)$,

3) if $K=\mathbb{C}$, then there is an irreducible
$\mathbb{C}GO(V)$-$\bb_n(m)_{\mathbb{C}}$-bimodule decomposition
$$
V^{\otimes n}=\bigoplus_{f=0}^{[n/2]}\bigoplus_{\substack{\lam\vdash n-2f\\
\lam'_1+\lam'_2\leq m}}\Delta({\lam})\otimes D({\lam}),$$ where
$\Delta({\lam})$ (respectively, $D({\lam})$) denotes the
irreducible $\mathbb{C}GO(V)$-module (respectively, the
irreducible $\bb_n(m)$-module) corresponding to $\lam$, and
$\lam'=(\lam'_1,\lam'_2,$ $\cdots)$ denotes the conjugate
partition of $\lam$.
\end{lem}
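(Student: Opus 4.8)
I would deduce all three parts from two classical inputs — the First Fundamental Theorem of invariant theory for the orthogonal group and the double centralizer theorem — beginning with a direct verification that the two actions commute. Because $GL(V)$ acts diagonally on $V^{\otimes n}$, it commutes with the place-permutation action of every $s_j$. For $e_j$, I would factor $\varphi(e_j)$ through the $j$-th and $(j+1)$-st tensor slots as the composite of Weyl's contraction $c\colon V\otimes V\to K$, $v\otimes w\mapsto(v,w)$, followed by the expansion $b\colon K\to V\otimes V$, $1\mapsto\sum_{k=1}^m v_k\otimes v_{k'}$ (the coevaluation dual to $c$ in the chosen basis). If $g\in GO(V)$ has similitude factor $d$, then $c\circ(g\otimes g)=d\,c$ while $(g\otimes g)\circ b=d^{-1}b$, so the two factors cancel and $\varphi(e_j)$ commutes with the $GO(V)$-action, hence a fortiori with the $O(V)$-action. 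Therefore $\varphi\bigl(\bb_n(m)_{\mathbb C}^{\mathrm{op}}\bigr)\subseteq\End_{\mathbb C GO(V)}\bigl(V^{\otimes n}\bigr)\subseteq\End_{\mathbb C O(V)}\bigl(V^{\otimes n}\bigr)$.

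For equality over $\mathbb C$ I would invoke the First Fundamental Theorem for $O_m(\mathbb C)$ (Weyl, Brauer): every $O(V)$-equivariant endomorphism of $V^{\otimes n}$ is a $\mathbb C$-linear combination of the diagram operators $\varphi(D)$, $D\in\BD_n$, so $\varphi\bigl(\bb_n(m)_{\mathbb C}^{\mathrm{op}}\bigr)=\End_{\mathbb C O(V)}\bigl(V^{\otimes n}\bigr)$. Since $\mathbb C$ is closed under square roots, the observation made in the text shows every $g\in GO(V)$ equals $(a\,\id_V)h$ with $a\in\mathbb C^{\times}$ and $h\in O(V)$; as $a\,\id_V$ acts on $V^{\otimes n}$ as the central scalar $a^n\,\id$, an endomorphism commuting with $O(V)$ automatically commutes with $GO(V)$, whence $\End_{\mathbb C O(V)}\bigl(V^{\otimes n}\bigr)=\End_{\mathbb C GO(V)}\bigl(V^{\otimes n}\bigr)$, the first line of 1). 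For the second line, $\ch\mathbb C=0$ and reductivity of $O(V)$ make $V^{\otimes n}$ a semisimple $\mathbb C O(V)$-module, so $A:=\psi\bigl(\mathbb C O(V)\bigr)$ is a semisimple subalgebra of $\End_{\mathbb C}\bigl(V^{\otimes n}\bigr)$ with $\End_A\bigl(V^{\otimes n}\bigr)=\varphi\bigl(\bb_n(m)_{\mathbb C}^{\mathrm{op}}\bigr)$; the double centralizer theorem then yields $\End_{\bb_n(m)_{\mathbb C}}\bigl(V^{\otimes n}\bigr)=A=\psi\bigl(\mathbb C O(V)\bigr)=\psi\bigl(\mathbb C GO(V)\bigr)$, the last equality being the one noted in the text. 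I expect this step to be the crux: everything hinges on the surjectivity assertion of the FFT.

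For 2), since $\dim_{\mathbb C}\bb_n(m)_{\mathbb C}=(2n-1)(2n-3)\cdots3\cdot1=|\BD_n|$, injectivity of $\varphi$ is equivalent to linear independence of $\{\varphi(D):D\in\BD_n\}$ in $\End_{\mathbb C}\bigl(V^{\otimes n}\bigr)$. I would prove this by a separating-tensor argument: to each Brauer $n$-diagram $D$ attach basis tensors $v_{\bi}=v_{i_1}\otimes\cdots\otimes v_{i_n}$ and $v_{\bj}$ so that the coefficient of $v_{\bj}$ in $v_{\bi}\varphi(D)$ is nonzero while the coefficient of $v_{\bj}$ in $v_{\bi}\varphi(D')$ vanishes for every $D'\neq D$; the hypothesis $m\geq n$ leaves enough pairwise distinct basis labels to place along the horizontal edges of $D$ that no other diagram can reproduce the same pairing pattern. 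Linear independence, hence injectivity, follows, and together with 1) this shows $\varphi$ is an isomorphism onto $\End_{\mathbb C GO(V)}\bigl(V^{\otimes n}\bigr)$.

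For 3), the double centralizer property from 1) together with $\ch\mathbb C=0$ gives a bimodule decomposition $V^{\otimes n}=\bigoplus_{\lambda}\Delta(\lambda)\otimes D(\lambda)$, with $\lambda$ ranging over the finitely many irreducible $\mathbb C GO(V)$-constituents of $V^{\otimes n}$ and $D(\lambda):=\Hom_{\mathbb C GO(V)}\bigl(\Delta(\lambda),V^{\otimes n}\bigr)$ the associated $\bb_n(m)_{\mathbb C}$-module, automatically irreducible and with distinct $\lambda$ yielding non-isomorphic modules. It remains to identify the index set: the rational irreducibles of $O_m(\mathbb C)$ are parametrized by partitions $\lambda$ with $\lambda'_1+\lambda'_2\leq m$, and — combining the $GL_m$ Schur--Weyl decomposition of $V^{\otimes n}$ with the branching rule from $GL_m$ to $O_m$, or via Brauer's direct character computation — $\Delta(\lambda)$ occurs in $V^{\otimes n}$ precisely when $|\lambda|=n-2f$ for some $0\leq f\leq[n/2]$; the resulting modules $D(\lambda)$ are exactly the cell modules of $\bb_n(m)_{\mathbb C}$, which in the semisimple range form a complete set of irreducibles. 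Matching these two parametrizations is the second, more combinatorial, delicate point, after which the asserted decomposition follows.
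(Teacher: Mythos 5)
The paper does not prove Lemma \ref{B1}; it cites the result from Brauer and Brown, so there is no internal proof for me to compare against. Reading your proposal on its own merits: parts 1) and 3) are sound. Your verification that the similitude factor cancels between contraction and expansion, the appeal to the First Fundamental Theorem for $O_m(\mathbb{C})$, the reduction from $GO(V)$ to $O(V)$ using the square-root observation, the double centralizer theorem in the semisimple situation, and the branching-rule identification of the index set in 3) are all exactly the right moves.

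Part 2), however, has a genuine gap as stated. You claim that to each Brauer diagram $D$ one can attach basis tensors $v_{\bi}, v_{\bj}$ so that the $(v_{\bi},v_{\bj})$-matrix entry of $\varphi(D)$ is nonzero while those of $\varphi(D')$ vanish for all $D'\neq D$, on the grounds that $m\geq n$ provides enough ``pairwise distinct'' labels. With the bilinear form $(v_i,v_j)=\delta_{i,j'}$ used in the paper (which is also the form your expansion map $1\mapsto\sum_k v_k\otimes v_{k'}$ implicitly invokes), this fails in boundary cases: take $m=n=2$ and $D=\mathrm{id}$. The only $\bi$ for which $\varphi(\mathrm{id})$ and $\varphi(s_1)$ differ on $v_{\bi}$ are $\bi=(1,2)$ and $(2,1)$, and for both of these $i_1=i_2'$, so $v_{\bi}\varphi(e_1)=v_1\otimes v_2+v_2\otimes v_1$ has a nonzero coefficient at $v_{\bi}$ as well. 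No choice of $(\bi,\bj)$ separates $\mathrm{id}$ from both $s_1$ and $e_1$, yet $\varphi$ is injective in this case. The issue is that with the hyperbolic form, ``pairwise distinct'' indices are not the same as ``orthogonally unpaired'' indices --- two distinct labels $i\neq j$ can satisfy $i=j'$, which defeats the separating argument. The fix, which the paper itself performs in Section 4 before its own injectivity argument, is to replace $V$ by the isomorphic orthogonal space $\widehat V$ with the symmetric form $(v_i,v_j)_1=\delta_{i,j}$ (this is possible over $\mathbb{C}$ because $\sqrt{-1}$ and $\sqrt{2}$ exist), under which orthogonal pairing becomes literal repetition of indices and your separating-tensor scheme then works cleanly for $m\geq n$. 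You should state that reduction explicitly; without it the argument is false as written.
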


The first main result in this work removes the restriction on $K$ in
part 1) and part 2) of the above theorem. We have

\begin{thm} \label{mainthm1}
For any infinite field $K$ of odd characteristic, we
have \begin{enumerate}
\item[(a)] $\psi\bigl(K GO(V)\bigr)=\End_{\bb_n(m)}\bigl(V^{\otimes
n}\bigr)$; \item[(b)] $
\varphi\bigl(\bb_n(m)\bigr)=\End_{KGO(V)}\bigl(V^{\otimes n}\bigr)
=\End_{KO(V)}\bigl(V^{\otimes n}\bigr),$ and if $m\geq n$, then
$\varphi$ is also injective, and hence an isomorphism onto $$\End_{K
GO(V)}\bigl(V^{\otimes n}\bigr).$$
\end{enumerate}
\end{thm}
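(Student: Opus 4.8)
The plan is to deduce Theorem~\ref{mainthm1} from the characteristic-zero case (Lemma~\ref{B1}) by a standard base-change / specialization argument, using the fact that the relevant $\Hom$- and $\End$-spaces are defined by equations with integer coefficients. The two parts are intertwined: part (a) is a double-centralizer-type statement relating $\psi(KGO(V))$ to $\End_{\bb_n(m)}(V^{\otimes n})$, and part (b) is the reverse centralizer statement. The key structural input will be that $GO(V)$, or at least a suitably large subgroup, is defined over $\Z$ (equivalently, acts on a $\Z$-form $V_\Z^{\otimes n}$ of the tensor space), so that the spaces $\End_{KGO(V)}(V^{\otimes n})$ and $\End_{\bb_n(m)}(V^{\otimes n})$ arise from $\Z$-modules by the functor $K\otimes_\Z(-)$.

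**First I would** set up the integral picture: fix the $\Z$-lattice $V_\Z = \bigoplus_i \Z v_i$ with the form $(v_i,v_j)=\delta_{i,j'}$, so that $\bb_n(m)$ acts on $V_\Z^{\otimes n}$ over $\Z$ by the displayed formulas (all structure constants are integers). The centralizer $A_\Z := \End_{\bb_n(m)}(V_\Z^{\otimes n})$ is then a $\Z$-form, and because $V_\Z^{\otimes n}$ is $\Z$-free of finite rank, $A_\Z$ is a pure (direct summand) sublattice of $\End_\Z(V_\Z^{\otimes n})$, so $K\otimes_\Z A_\Z \hookrightarrow \End_K(V^{\otimes n})$ lands inside $\End_{\bb_n(m)}(V^{\otimes n})$, and in fact equals it because purity is preserved under flat base change. \textbf{Next}, for part (a), one uses that the orthogonal \emph{monoid} (the Zariski closure of $GO(V)$ inside $\mathrm{End}(V)$, or of the scaled copies) is defined over $\Z$; its coordinate ring, tensored with $K$, surjects onto $\psi(KGO(V))$ because $K$ is infinite (polynomial functions on a variety over an infinite field are detected pointwise). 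Thus $\psi(KGO(V))$ is also the base change $K\otimes_\Z B_\Z$ of an integral algebra $B_\Z$ of invariants/coinvariants inside $\End_\Z(V_\Z^{\otimes n})$. Over $\C$, Lemma~\ref{B1} gives $\C\otimes B_\Z = \End_{\bb_n(m)_\C}(V_\C^{\otimes n}) = \C\otimes A_\Z$; one then argues, using that both $B_\Z$ and $A_\Z$ are pure sublattices of $\End_\Z(V_\Z^{\otimes n})$ agreeing after $\otimes\C$, that $B_\Z = A_\Z$ already over $\Z$, whence $K\otimes B_\Z = K\otimes A_\Z$, which is precisely statement (a). The same lattice-matching argument, applied with the roles of the two algebras swapped (i.e. $\End_{KGO(V)}(V^{\otimes n})$ as a base change and comparison with $\varphi(\bb_n(m))$), gives the first equality in (b); the equality with $\End_{KO(V)}(V^{\otimes n})$ follows from the remark in the introduction that $\psi(KO(V))=\psi(KGO(V))$ once $K$ is closed under square roots, together with a further descent to an arbitrary infinite $K$.

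**The injectivity claim in (b)** when $m\ge n$ is handled separately and more cheaply: $\varphi$ is a map of $\Z$-algebras $\bb_n(m)^{\mathrm{op}}\to\End_\Z(V_\Z^{\otimes n})$, and by Lemma~\ref{B1}(2) its $\C$-base change is injective for $m\ge n$; since $\bb_n(m)$ is $\Z$-free of finite rank, injectivity of $\C\otimes\varphi$ forces $\ker\varphi=0$ over $\Z$ (the kernel is a sublattice that vanishes rationally, hence is zero), and then $\ker(K\otimes\varphi)=0$ as well provided $\bb_n(m)/\ker\varphi$ stays torsion-free — which it does, being a sublattice of $\End_\Z(V_\Z^{\otimes n})$. \textbf{The main obstacle} I anticipate is not the formal base-change bookkeeping but establishing cleanly that $\psi(KGO(V))$ is genuinely the base change of an integral form of the correct ``coordinate ring of the orthogonal monoid'' — i.e. that the orthogonal monoid scheme over $\Z$ (or its identity-component analogue when $m$ is even) is flat and that its ring of functions behaves well under reduction mod $p$. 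This is exactly where the normality statement advertised in the abstract (each connected component of the orthogonal monoid is a normal variety, hence the orthogonal Schur algebra is a generalized Schur algebra) does the real work: normality/flatness of the integral monoid is what guarantees that the good-filtration and surjectivity properties survive the passage from $\C$ to characteristic $p$. So the heart of the proof is geometric — controlling the integral model of the orthogonal monoid — while the duality statements themselves follow by the purity-of-lattices formalism sketched above.
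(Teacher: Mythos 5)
Your overall framing is on the right track, but the proposal treats as formal bookkeeping two steps that are in fact the hard core of the paper, and the specific lattice-theoretic claims you lean on are not correct without substantial extra input.

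First, the assertion that the $\Z$-forms commute with base change is exactly what needs to be proved, and it does not follow from purity. It is true that $A_\Z:=\End_{\bb_n(m)_\Z}(V_\Z^{\otimes n})$, being the kernel of a $\Z$-linear map between free modules of finite rank, is a pure sublattice of $\End_\Z(V_\Z^{\otimes n})$; this gives \emph{injectivity} of $K\otimes_\Z A_\Z\to\End_{\bb_n(m)_K}(V_K^{\otimes n})$, but not \emph{surjectivity}. Surjectivity can fail when the relevant cokernels acquire $p$-torsion. The paper proves surjectivity (Corollary \ref{basechange}) only after a real argument: Cliff's explicit spanning set for the graded pieces $A_K^o(m,n)$ of the coordinate algebra of the orthogonal monoid, combined with a comparison to $\overline{K}[GO_m]$ and the Demazure--Gabriel descent, to show that spanning set is an integral basis (Lemma \ref{key1}). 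Similarly, the claim that $\End_{KGO(V)}(V^{\otimes n})$ arises from an integral form whose dimension is constant in $K$ requires the tilting-module theory for the \emph{disconnected} group $O_m(\overline K)$ developed in Section 3 (Lemmas \ref{key21}--\ref{key2}); this is not available off the shelf, because the standard good-filtration machinery is for connected reductive groups. Your appeal to ``normality of the integral monoid'' is not what the paper uses and does not by itself deliver these base-change statements; the normality result (Theorem \ref{mainthm15}) concerns the monoid as a variety over $\overline K$ and is used for the generalized-Schur-algebra assertion, not to justify the base change.

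Second, your argument for the injectivity of $\varphi$ when $m\ge n$ has the same kind of gap. From $\ker\varphi_\Z=0$ and torsion-freeness of $\bb_n(m)/\ker\varphi_\Z=\bb_n(m)$ you conclude $\ker(\varphi\otimes K)=0$, but that inference requires $\operatorname{Tor}_1^\Z(\operatorname{coker}\varphi_\Z, K)=0$, i.e.\ that $\varphi(\bb_n(m)_\Z)$ is a \emph{pure} sublattice of $\End_\Z(V_\Z^{\otimes n})$; torsion-freeness of the source quotient is not enough. The paper does not attempt this reduction; it proves injectivity directly over an arbitrary infinite $K$ (after a harmless change of bilinear form) by a concrete annihilator computation using Enyang's cellular basis (Section 4). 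Surjectivity of $\varphi$ then follows for $m\ge n$ by dimension count, and the case $m<n$ is handled by a separate reduction to a larger $m_0\ge n$ via the maps $\Theta_0,\Theta_1$ and the commutative diagram in Lemma \ref{keycd} (Section 5) --- a step your proposal does not address at all. In short, the specialization philosophy is a reasonable guide, but the genuine content of the theorem is precisely in establishing the flatness/base-change and constant-dimension facts you take for granted, and in the combinatorial injectivity argument, none of which are ``formal bookkeeping.''
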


We remark that the first statement in part b) first appeared in \cite{DP} based on a completely different approach.
The algebra $$S_K^o(m,n):=\End_{\bb_n(m)}\bigl(V^{\otimes n}\bigr)$$ is
called the orthogonal Schur algebra associated to $GO(V)$. Note that we use a different definition of orthogonal Schur algebra in
Section 2 by defining $S_K^o(m,n)$ to be the linear dual of a certain coalgebra; the two
definitions are reconciled in \eqref{Oeh1}.
Let $\mathcal{R}=\mathbb{Z}[1/2]$. Let $V_{\mathcal{R}}$ be the free
$\mathcal{R}$-module generated by $v_1,\cdots,v_m$. Let
$\bb_n(m)_{\mathcal{R}}$ be the Brauer algebra defined over
$\mathcal{R}$. We set
$$S_{\mathcal{R}}^o(m,n):=\End_{\bb_n(m)_{\mathcal{R}}}\bigl(V_{\mathcal{R}}^{\otimes
n}\bigr).$$ In the course of our proof of Theorem \ref{mainthm1}, we
show that the orthogonal Schur algebra is stable under base change,
and the dimensions of both the orthogonal Schur algebra and the
endomorphism algebra $\End_{K GO(V)}\bigl(V^{\otimes n}\bigr)$ are
independent of the infinite field $K$ as long as ${\rm char} K\neq
2$, see Corollary \ref{basechange} and Lemma \ref{key2}. We have
also the following result:

\begin{thm}\label{mainthm15}
Let $K$ be an infinite field of odd characteristic
and let $OM_m(\overline{K})$ be the orthogonal monoid defined over the
algebraic closure of $K$. Suppose $m$ is even. Then
$OM_m(\overline{K})$ has two connected components, say
$OM_m^+(\overline{K}), OM_m^-(\overline{K})$, where
$OM_m^+(\overline{K})$ is the component containing the identity. Both
components $OM_m^+(\overline{K}), OM_m^-(\overline{K})$ are normal
varieties, and $OM_m^+(\overline{K})$ is a reductive normal algebraic
monoid. In particular, the orthogonal Schur algebra $S_K^{o,+}(m,n)$
associated to the identity component of $GO(V)$ is always a
generalized Schur algebra in the sense of \cite{Do1}, \cite{Do2}.
\end{thm}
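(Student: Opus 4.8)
The plan is to identify $OM_m^{+}(\overline K)$ with the Zariski closure $\overline{GO^{+}(V)}$ of the identity component $GO^{+}(V)=GSO(V)$ of $GO(V)$ inside $\End_{\overline K}(V\otimes_K\overline K)$, and likewise $OM_m^{-}(\overline K)$ with $\overline{GO^{-}(V)}$, where $GO^{-}(V)=\tau\cdot GSO(V)$ for a fixed improper similitude $\tau$ (say a reflection, so that $\tau^{2}=1$, $\det\tau=-1$ and $d(\tau)=1$). First I record the two-component statement. Because $m$ is even, $m/2\in\mathbb Z$, so $g\mapsto\det(g)\,d(g)^{-m/2}$ is a well-defined regular character $GO(V)\to\{\pm1\}$ whose fibre over $1$ is $GO^{+}(V)$; hence $GO(V)$ has exactly two connected components. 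Since the construction of $OM_m$ in Section~2 retains this sign datum on the closure, $OM_m(\overline K)=OM_m^{+}(\overline K)\sqcup OM_m^{-}(\overline K)$ is a genuine disjoint union, with $OM_m^{+}(\overline K)$ the piece containing the identity (and closed under multiplication), while left translation by $\tau$ is an isomorphism of varieties $OM_m^{+}(\overline K)\xrightarrow{\ \sim\ }OM_m^{-}(\overline K)$. So everything reduces to the study of $M:=\overline{GSO(V)}$.

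Next I would show that $M$ is a reductive normal algebraic monoid. Its unit group is $M\cap GL(V)=GSO(V)$, which is connected reductive, so $M$ is an irreducible affine algebraic monoid with reductive unit group; it also has a zero, namely $0=\lim_{t\to0}t\,\id_{V}$. By the normality criterion for reductive monoids, $M$ is normal if and only if the Zariski closure $\overline T$ of a maximal torus $T\subseteq GSO(V)$ is normal. Take $T$ to be the group of diagonal similitudes $\diag(d_{1},\dots,d_{m})$ for which $d_{i}d_{i'}$ is independent of $i$, where $\ell:=m/2$ and $i'=m+1-i$. Viewed inside the diagonal matrices $\cong\mathbb A^{m}$, $\overline T$ is the affine toric variety defined by the binomials $d_{i}d_{i'}=d_{j}d_{j'}$; being irreducible of dimension $\ell+1$ and cut out by the $\ell-1$ equations $d_{1}d_{1'}=d_{k}d_{k'}$ ($2\le k\le\ell$), it is a complete intersection, in particular Cohen--Macaulay. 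A direct inspection of the Jacobian of these binomials shows that $\overline T$ is singular only where at least two of the dual pairs $(d_{i},d_{i'})$ vanish simultaneously, a locus of codimension $\ge2$ in $\overline T$; hence $\overline T$ satisfies Serre's conditions $(R_{1})$ and $(S_{2})$ and is therefore normal. Consequently $M$ is normal, and so is $OM_m^{-}(\overline K)\cong\tau\cdot OM_m^{+}(\overline K)$. Both the defining binomials and the codimension count are characteristic-free, so normality holds uniformly in the odd characteristic of $K$.

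Finally, the Schur algebra assertion. The coordinate algebra $K[OM_m^{+}]$ is $\mathbb N$-graded by the central $\mathbb G_{m}$ of $GSO(V)$, and, by the identification of the orthogonal Schur algebra as a coefficient dual (see \eqref{Oeh1}, together with Theorem~\ref{mainthm1} and the base change of Corollary~\ref{basechange}), $S_K^{o,+}(m,n)$ is the linear dual of the degree-$n$ component $K[OM_m^{+}]_{n}$. Since $OM_m^{+}(\overline K)$ is a \emph{normal} reductive monoid, the set $\pi$ of dominant weights of $GSO(V)$ occurring in $K[OM_m^{+}]$ is saturated; this is precisely where normality is used, and it is the content of the analysis of classical monoids in \cite{Do1}, \cite{Do2}. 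Hence $K[OM_m^{+}]_{n}$ is the coefficient space of the generalized Schur algebra $S(\pi_{n})$ attached to the saturated set $\pi_{n}$ of such weights of total degree $n$, and dualizing shows that $S_K^{o,+}(m,n)$ is a generalized Schur algebra in the sense of \cite{Do1}, \cite{Do2}. The main obstacle in this programme is the normality step --- verifying that $M$ falls under the reductive-monoid normality criterion and carrying out the (elementary but not automatic) computation for $\overline T$; the two-component statement is essentially formal once the sign datum is carried along, and the passage to positive characteristic is subsumed in the base-change and dimension-independence results already quoted.
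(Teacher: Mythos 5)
Your proposal reaches the same conclusions but via a genuinely different route for the central normality step. The paper realizes $\overline{K}[OM_{2l}^+]$ as a subring $A$ of the manifestly normal domain $B:=\overline{K}[SO_{2l}]\otimes\overline{K}[x]$ (Theorem~\ref{keythm1}, which rests on Cliff's isomorphism $GO_{2l}\cong O_{2l}\times\overline{K}^{\times}$), then checks that $B\setminus A$ is multiplicatively closed and invokes \cite[Ch.~5, Exercise~7]{AM} to conclude $A$ is integrally closed. You instead appeal to the general criterion that an irreducible affine algebraic monoid $M$ with reductive unit group is normal iff the closure $\overline T$ of a maximal torus is normal (Rittatore/Renner), and then verify normality of $\overline T$ directly: it is the binomial variety $\{d_id_{i'}=d_jd_{j'}\}$ in the diagonal matrices, which is a complete intersection (hence $(S_2)$) with singular locus of codimension $\geq 3$ (hence $(R_1)$), so Serre's criterion applies. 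This is correct, although two small points deserve explicit justification that the sketch elides: (i) that the $\ell-1$ binomials actually generate the prime ideal of $\overline T$ --- equivalently that the scheme they define is irreducible and reduced, which follows from the principal ideal theorem (no component can live inside $\{d_1d_{1'}=0\}$, which meets the variety in dimension only $\ell$) together with generic smoothness from $T\subset Z$ dense open; and (ii) a citation for the torus-closure normality criterion, which is nontrivial. Your approach is shorter and more conceptual given that criterion; the paper's is longer but self-contained, trading the monoid-theoretic black box for the explicit embedding of Theorem~\ref{keythm1}. For the generalized Schur algebra conclusion the paper cites the precise result \cite[Theorem~4.4]{Dt2}, which you should also use rather than relying on the vaguer appeal to "the analysis of classical monoids in \cite{Do1}, \cite{Do2}"; the relevant input beyond normality is the observations that $0\in OM_{2l}^+$ and that $OM_{2l}^+$ has one-dimensional center.
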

When $m$ is odd, the orthogonal Schur algebra $S_K^{o}(m,n)$ is in
general not a generalized Schur algebra.  Note that the recent
papers \cite{Liu,Liu2} also study Schur algebras related to orthogonal
groups.\smallskip

As a consequence of Schur--Weyl duality, we know that the
annihilator of the tensor space $V^{\otimes n}$ in the Brauer
algebra $\bb_n(m)$ is stable under base change as long as $\ch K\neq
2$. Our second main result in this paper gives a characteristic-free
description of this annihilator.

\begin{thm} \label{mainthm2}
Let $K$ be an infinite field of odd characteristic and consider the
partition of $n$ given by
$(m+1,1^{n-m-1}):=(m+1,\underbrace{1,\cdots,1}_{\text{$n-m-1$
    copies}})$.  We have that $$
\Ker\varphi=\CM_{K}^{(m+1,1^{n-m-1})},
$$
where $\CM_{K}^{(m+1,1^{n-m-1})}$ is the right $K[\BS_{2n}]$-module
associated to $(m+1,1^{n-m-1})$ as defined in the paragraphs below
Lemma \ref{cor24}. In particular, $\Ker\varphi$ has a Specht
filtration, regarded as $K[\BS_{2n}]$-module.
\end{thm}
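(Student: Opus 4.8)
The plan is to compute $\Ker\varphi$ by reducing the orthogonal problem to a symmetric-group problem on $\BS_{2n}$, where the relevant kernel can be identified with a known module. First I would use the standard ``doubling'' trick that relates the Brauer algebra $\bb_n(m)$ acting on $V^{\otimes n}$ to the group algebra $K[\BS_{2n}]$ acting on $V^{\otimes 2n}$: there is a well-known embedding $\bb_n(m)\hookrightarrow$ (a corner of) $K[\BS_{2n}]$, or more precisely one realizes $V^{\otimes n}$ as a direct summand (or image of an idempotent projection built from the form $(\,,)$) of $V^{\otimes 2n}$ as a $\bb_n(m)$-module, so that the action of a Brauer diagram is recovered from the place-permutation action of a suitable element of $\BS_{2n}$ together with contractions. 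Under this correspondence, $\Ker\varphi$ is identified with the intersection of $\bb_n(m)$ with the kernel of the $K[\BS_{2n}]$-action on $V^{\otimes 2n}$, and the latter kernel is governed by classical $GL(V)$ Schur--Weyl duality: since $\dim V=m$, the kernel of $(K\BS_{2n})^{\mathrm{op}}\to\End_K(V^{\otimes 2n})$ is the two-sided ideal generated by the Young antisymmetrizer on $m+1$ letters, which by James's theory is precisely the module $\CM_K^{(m+1,1^{2n-m-1})}$-type object. The definition of $\CM_K^{(m+1,1^{n-m-1})}$ given after Lemma \ref{cor24} is tailored exactly so that, after intersecting with $\bb_n(m)$ and re-indexing, one lands on the partition $(m+1,1^{n-m-1})$ of $n$.

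The key steps, in order, would be: (i) set up the precise dictionary between $\bb_n(m)$-module maps on $V^{\otimes n}$ and $K[\BS_{2n}]$-module maps on $V^{\otimes 2n}$, recording how the Brauer generators $s_i, e_i$ translate and how the bilinear form enters; (ii) invoke classical Schur--Weyl duality for $GL(V)$ over the infinite field $K$ (part (a) of the displayed duality in the Introduction, valid in all characteristics) to get $\Ker\bigl((K\BS_{2n})^{\mathrm{op}}\to\End_K(V^{\otimes 2n})\bigr)$ as the ideal $\langle e_{(m+1,1^{\dots})}\rangle$, and identify this ideal with the span of Brauer/permutation diagrams having ``too many'' strands forced together — this is where $\CM_K^{(m+1,1^{n-m-1})}$ is defined to be exactly the analogous span inside $\bb_n(m)$; (iii) show the intersection of that ideal with the image of $\bb_n(m)$ inside $K[\BS_{2n}]$ equals $\CM_K^{(m+1,1^{n-m-1})}$ — one containment is essentially by construction, and the reverse containment should follow by a rank/dimension count using Corollary \ref{basechange} and Lemma \ref{key2}, which say $\dim\Ker\varphi=\dim_{\mathbb C}\Ker\varphi_{\mathbb C}$ is characteristic-independent, combined with the known characteristic-zero answer from Lemma \ref{B1}(3); (iv) deduce the Specht filtration claim from the fact that $\CM_K^{(m+1,1^{n-m-1})}$, as constructed from permutation modules / Brauer-diagram spans indexed by partitions, carries a filtration by Specht modules for $K[\BS_{2n}]$ — a property built into its definition or following from the cell structure.

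I expect the main obstacle to be step (iii): establishing that the intersection of the $GL$-kernel ideal with $\bb_n(m)$ is no larger than the combinatorially-defined $\CM_K^{(m+1,1^{n-m-1})}$, i.e.\ that no ``unexpected'' relations among Brauer diagrams on $V^{\otimes n}$ arise beyond those visibly coming from antisymmetrizing $m+1$ of the $2n$ tensor slots. In characteristic zero this is forced by the explicit bimodule decomposition in Lemma \ref{B1}(3), but in positive characteristic one cannot decompose $V^{\otimes n}$; the resolution is to avoid any characteristic-$p$ representation theory of $\bb_n(m)$ and instead argue purely by dimension count, using that $\varphi$ is defined integrally (over $\mathcal R=\Z[1/2]$), that $\CM^{(m+1,1^{n-m-1})}$ is a free $\mathcal R$-module with the ``right'' rank, and that the base change of both $\Ker\varphi$ and $\CM^{(m+1,1^{n-m-1})}$ to $\mathbb C$ agree by Lemma \ref{B1}. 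Thus the genuinely new content is the base-change stability (Corollary \ref{basechange}), which is proved earlier; granting it, Theorem \ref{mainthm2} follows by transporting the characteristic-zero identity along a flat, rank-preserving integral model.
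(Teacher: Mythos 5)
Your high-level strategy — establish one containment of $\CM_K^{(m+1,1^{n-m-1})}$ into $\Ker\varphi$ and then conclude by a characteristic-independent dimension count — is the same as the paper's, and your identification of Corollary \ref{basechange}, Lemma \ref{key2}, and Lemma \ref{B1}(3) as the ingredients behind the dimension count is on target (the paper packages these into Lemma \ref{lm32}). However, the mechanism you propose for the containment does not work as stated. You write that $\Ker\varphi$ ``is identified with the intersection of $\bb_n(m)$ with the kernel of the $K[\BS_{2n}]$-action on $V^{\otimes 2n}$,'' treating $\bb_n(m)$ as if it embedded as a subalgebra of $K\BS_{2n}$ so that classical $GL$ Schur--Weyl on $V^{\otimes 2n}$ could be intersected down. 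That is not the structure available: under the isomorphism $\End_K(V^{\otimes n})\cong (V^{\otimes 2n})^{\ast}$, the map $\varphi$ is a $\BS_{2n}$-\emph{module} homomorphism from $\bb_n(m)$ (with the signed permutation action $\star$) into $(V^{\otimes 2n})^{\ast}$, whose image is the cyclic $\BS_{2n}$-module generated by $\xi^{\otimes n}$ with $\xi(w_1\otimes w_2)=(w_1,w_2)$. The Brauer algebra is \emph{not} a subalgebra of $K\BS_{2n}$, so ``intersecting with the $GL$ Schur--Weyl kernel'' has no direct meaning, and the functionals $\varphi(D_{\ui,\uj})$ live in $(V^{\otimes 2n})^{\ast}$ rather than in $\End_K(V^{\otimes 2n})$. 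The paper's containment is established much more directly: since $\varphi$ intertwines the $\BS_{2n}$-actions (Lemma \ref{keydes}) and $\CM_K^{(m+1,1^{n-m-1})}$ is the $\BS_{2n}$-module generated by the elements $Y_\lambda$ with $\lambda_1>m$, it suffices to show $Y_\lambda\in\Ker\varphi$; this reduces to $Y_{(\lambda_1)}^{(0)}\in\Ker\varphi$, which is the classical fact (cited to H\"arterich) that the antisymmetrizer on $m+1$ tensor slots kills $V^{\otimes n}$ when $\dim V=m$.

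A second gap is that you treat $\CM_K^{(m+1,1^{n-m-1})}$ as though its rank and Specht filtration were ``built into its definition,'' but these are precisely the nontrivial combinatorial content of Section 6 (the $Y_{\nu,\ft}$ basis of Theorem \ref{thm210}, base-change stability, and the filtration of Theorem \ref{thm211}). Without those results you cannot compute $\dim\CM_K^{(m+1,1^{n-m-1})}$ to compare with $\dim\Ker\varphi$, and you cannot assert the Specht filtration claimed in the theorem. So while the architecture of your argument is sound, the two concrete pillars — the $GL$-Schur--Weyl reduction on $V^{\otimes 2n}$ and the ``by construction'' properties of $\CM_K^{\lambda}$ — do not hold up, and each needs to be replaced by what the paper actually supplies: H\"arterich's vanishing result for the first, and the Section~6 basis theorems for the second.
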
\noindent
We refer the reader to Sections 6 and 7 for the definition of
$\CM_{K}^{(m+1,1^{n-m-1})}$ and the action of $\BS_{2n}$ on it.
\smallskip

The paper is organized as follows. In Section 2 we prove the
surjectivity of $\psi$. The proof is based on Cliff's basis for the
coordinate algebra of orthogonal groups and a generalized
Faddeev--Reshetikhin--Takhtajan's construction. We show that if $m$
is even, then each connected component of the orthogonal monoid is a
normal variety. This implies that the orthogonal Schur algebra
associated to the identity component is a generalized Schur algebra.
In Section 3 we develop a tilting module theory for the orthogonal
group $O_m(\overline{K})$. The main result there is that the tensor
product of two tilting modules over $O_m(\overline{K})$ is again a
tilting module. As a result, we deduce that the dimension of the
endomorphism algebra of tensor space $V^{\otimes n}$ as a module
over $O_m({K})$ does not depend on ${K}$ (for ${\rm char}K\neq 2$).
Based on the results in Section 3, the surjectivity of $\varphi$ in
the case where $m\geq n$ is proved in Section 4 in the same manner
as \cite[Section 3]{DDH}. In Section 5, we prove the surjectivity of
$\varphi$ in the case where $m\leq n$ in a similar way as
\cite[Section 4]{DDH}. In Section 6, we study a permutation action
of the symmetric group $\BS_{2n}$ on the Brauer algebra. We
construct a new $\Z$-basis for the resulting right
$\BS_{2n}$-module, which yields an integral filtration of Brauer
algebra by right $\BS_{2n}$-modules. Using these results and the
Schur--Weyl duality we have proved, we give in Section 7 an explicit
and characteristic-free description of the annihilator of tensor
space $V^{\otimes n}$ in the Brauer algebra $\bb_n(m)$.
\bigskip\bigskip

\section{Orthogonal monoid and orthogonal Schur algebra}

Let $R$ be a noetherian integral domain such that $2\cdot 1_{R}$ is
invertible in $R$. Let $x_{i,j}, 1\leq i,j\leq m$, be $m^2$ commuting
indeterminates over $R$. Let $A_{R}(m)$ be the free commutative
$R$-algebra (i.e., polynomial algebra) in these $x_{i,j}, 1\leq
i,j\leq m$. Let $I_{R}$ be the ideal of $A_{R}(m)$ generated by
elements of the form
\begin{equation} \label{rel21} \left\{\begin{aligned}
&\sum_{k=1}^{m}x_{k,i}x_{k',j},\,\,\, 1\leq i\neq j'\leq m;\\
&\sum_{k=1}^{m}x_{i,k}x_{j,k'},\,\,\, 1\leq i\neq j'\leq m;\\
&\sum_{k=1}^{m}(x_{k,i}x_{k',i'}-x_{j,k}x_{j',k'}),\,\,\, 1\leq i,j
\leq m.
\end{aligned}\right.
\end{equation}
The $R$-algebra $A_{R}(m)/{I_{R}}$ will be denoted by
$A_{R}^{o}(m)$. Write $c_{i,j}$ for the canonical image
$x_{i,j}+I_{R}$ of $x_{i,j}$ in $A_{R}^{o}(m)$ ($1\leq i,j\leq m$).
Then in $A_{R}^{o}(m)$ we have the relations
\begin{equation}\label{rel22}
\left\{\begin{aligned}
&\sum_{k=1}^{m}c_{k,i}c_{k',j}=0,\,\,\, 1\leq i\neq j'\leq m;\\
&\sum_{k=1}^{m}c_{i,k}c_{j,k'}=0,\,\,\, 1\leq i\neq j'\leq m;\\
&\sum_{k=1}^{m}(c_{k,i}c_{k',i'}-c_{j,k}c_{j',k'})=0,\,\,\, 1\leq
i,j \leq m.
\end{aligned}\right.\end{equation}
Note that $A_{R}(m)$ is a graded algebra, $A_{R}(m)=\oplus_{n\geq 0}
A_{R}(m,n)$, where $A_{R}(m,n)$ is the subspace spanned by the
monomials of the form $x_{\bi,\bj}$ for $(\bi,\bj)\in I^2(m,n)$,
where
$$\begin{aligned}
&I(m,n):=\bigl\{\bi=(i_1,\cdots,i_n)\bigm|1\leq i_j\leq
m,\,\forall\,j\bigr\},\\
&I^2(m,n)=I(m,n)\times I(m,n),\quad x_{\bi,\bj}:=x_{i_1,j_1}\cdots
x_{i_n,j_n}.
\end{aligned}
$$
Since $I_{R}$ is a homogeneous ideal, $A_{R}^{o}(m)$ is graded too
and $$A_{R}^{o}(m)=\oplus_{n\geq 0}A_{R}^{o}(m,n),$$ where
$A_{R}^{o}(m,n)$ is the subspace spanned by the monomials of the
form $c_{\bi,\bj}$ for $(\bi,\bj)\in I^2(m,n)$, where $$
c_{\bi,\bj}:=c_{i_1,j_1}\cdots c_{i_n,j_n}.$$

{\it By convention, throughout this paper, we identify the symmetric
group $\BS_n$ with the set of maps acting on their arguments on the
right.}  In other words, if $\sigma \in \BS_n$ and $a\in
\{1,\dots,n\}$ we write $(a)\sigma$ for the value of $a$ under
$\sigma$. This convention carries the consequence that, when
considering the composition of two symmetric group elements, the
leftmost map is the first to act on its argument. For example, we
have $(1,2,3)(2,3)=(1,3)$ in the usual cycle notation.\smallskip

If one defines $$ \Delta(x_{\bi,\bj})=\sum_{\uk\in I(m,n)
}x_{\bi,\uk}\otimes
x_{\uk,\bj},\quad\varepsilon(x_{\bi,\bj})=\delta_{\bi,\bj},\,\,\forall\,\bi,\bj\in
I(m,n),\forall\,n,
$$
then the algebra $A_{R}(m)$ becomes a graded bialgebra, and each
$A_{R}(m,n)$ is a sub-coalgebra of $A_{R}(m)$. Its linear dual
$$S_{R}(m,n):=\Hom_{R}(A_{R}(m,n),R)$$ is the usual {\it Schur
algebra} over $R$ (see \cite{Gr}). Let
$$S_{R}^{o}(m,n):=\Hom_{R}(A_{R}^{o}(m,n),R)$$ the {\em orthogonal
  Schur algebra}.  It is clear that $A_{R}^{o}(m,n)$ is in fact a
quotient coalgebra of $A_{R}(m,n)$, hence $S_{R}^{o}(m,n)$ is a
subalgebra of $S_{R}(m,n)$.\smallskip

For any integers $i,j\in\{1,2,\cdots,m\}$, we let $E_{i,j}$
denote the corresponding matrix unit for
$\End_{R}(V_{R})$, where $V_R$ is a free $R$-module of rank $n$. We define
$$\beta:=\sum_{1\leq i,j\leq
m}E_{i,j}\otimes E_{j,i},\,\,\,\, \gamma:=\sum_{1\leq i,j\leq
m}E_{i,j}\otimes E_{i',j'}.
$$
For $i=1,2,\cdots,n-1$, we set $$ \beta_i:=\id_{V^{\otimes
i-1}}\otimes\beta\otimes\id_{V^{\otimes n-i-1}},\quad
\gamma_i:=\id_{V^{\otimes i-1}}\otimes\gamma\otimes\id_{V^{\otimes
n-i-1}}.
$$
By direct verification, it is easy to see that the map which sends
$s_i$ to $\beta_i$ and $e_i$ to $\gamma_i$ for each $1\leq i\leq
n-1$ extends to a representation of $\bb_n(m)$ on $V_{R}^{\otimes n}$
which is nothing but the representation we have defined above Lemma
1.1. By \cite[Theorem 3.3]{Oe1} and the discussion in \cite[Section
5]{Oe1}, we know that \begin{equation}\label{Oeh1}
\End_{\bb_n(m)_R}\Bigl(V_R^{\otimes n}\Bigr)\cong
S_{R}^{o}(m,n):=\Hom_{R}(A_{R}^{o}(m,n),R).
\end{equation}
Let $i$ be an integer with $1\leq i\leq m$. We define $$
{\det}_0=\sum_{k=1}^{m}c_{k,i}c_{k',i'}\in A_{R}^{o}(m,2).
$$
By the relations in (\ref{rel22}), we know that ${\det}_0$ does not
depend on the choice of $i$.  It is well known (and easy to check)
that ${\det}_0$ is a group-like element in the bialgebra $A^o_R(m)$.
Note that the relations in (\ref{rel21}) are equivalent to $$
C^{t}JC={\det}_0J,\quad CJC^t={\det}_0J,
$$
where $$ C:=(c_{i,j})_{n\times n},\quad J=\begin{pmatrix}0 &0
&\cdots &0 & 1\\
0 &0 &\cdots &1 & 0\\
\vdots &\vdots &\vdots &\vdots &\vdots\\
0& 1 & \cdots & 0 & 0\\
 1 & 0 & \cdots & 0 & 0
\end{pmatrix}_{n\times n}
$$
It follows that $\bigl({\det}_0\bigr)^{n}={\det}^2$, where
$$\det:=\sum_{k=1}^{m}(-1)^{\ell(w)}c_{1,w(1)}c_{2,w(2)}\cdots c_{n,w(n)}$$
denotes the usual determinant function.\smallskip

Let $\overline{K}$ be the algebraic closure of $K$. Let
$M_m(\overline{K})$ be the set of all $m\times m$ matrices over
$\overline{K}$. Then $M_m(\overline{K})$ is a linear algebraic
monoid over $\overline{K}$. We define the orthogonal monoid
$O\!M_m(\overline{K})$ as follows: $$
O\!M_m(\overline{K}):=\biggl\{A\in
M_m(\overline{K})\biggm|\begin{matrix}\text{there exists $d\in
\overline{K}$, such that}\\
\text{$A^t JA=AJA^t=dJ$}.
\end{matrix}\biggr\}.
$$
The coordinate algebra $\overline{K}[M_{m}]$ of $M_m(\overline{K})$
is isomorphic to
$A_{\overline{K}}(m):=A_{K}(m)\otimes_{K}\overline{K}$. The
coordinate algebra of the general linear group
$GL_{m}(\overline{K})$ is isomorphic to $\overline{K}[x_{i,j},
\det(x_{i,j})_{m\times m}^{-1}]_{1\leq i,j\leq m}$. The orthogonal
similitude group $GO_{m}(\overline{K})$ is defined as $$
GO_m(\overline{K}):=\biggl\{A\in
GL_m(\overline{K})\biggm|\begin{matrix}\text{there exists $d\in
\overline{K}^{\times}$, such that}\\
\text{$A^t JA=AJA^t=dJ$}.
\end{matrix}\biggr\}.
$$

The natural embedding $\iota_1: GO_{m}(\overline{K})\hookrightarrow
GL_{m}(\overline{K})$ induces a surjective map $\iota_1^{\#}:
\overline{K}[GL_{m}]\twoheadrightarrow \overline{K}[GO_{m}]$.
Similarly, the natural embedding $\iota_0:
O\!M_{m}(\overline{K})\hookrightarrow M_{m}(\overline{K})$ induces a
surjective map $\iota_0^{\#}: \overline{K}[M_{m}]\twoheadrightarrow
\overline{K}[O\!M_{m}]$. We use $\iota_2, \iota_3$ to denote the
natural inclusion from $GO_m(\overline{K})$ into
$O\!M_{m}(\overline{K})$ and the natural inclusion from
$GL_{m}(\overline{K})$ into $M_{m}(\overline{K})$ respectively. Note
that $GL_m(\overline{K})$ is a dense open subset of
$M_m(\overline{K})$, and by \cite[(6.6(e)), (7.6(g))]{Dt},
$GO_m(\overline{K})$ is a dense open subset of
$O\!M_m(\overline{K})$. Therefore, $\iota_2$ (resp., $\iota_3$)
induces an inclusion $\iota_2^{\#}$ from $\overline{K}[O\!M_{m}]$
into $\overline{K}[GO_m]$ (resp., an inclusion $\iota_3^{\#}$ from
$\overline{K}[M_{m}]$ into $\overline{K}[GL_{m}]$. We denote by
$\widetilde{A}_{\overline{K}}^{o}(m)$ (resp.,
$\widetilde{A}_{\overline{K}}^{o}(m,n)$) the image of
$A_{\overline{K}}(m)$ (resp., of $A_{\overline{K}}(m,n)$) in
$\overline{K}[GO_m]$. We define $\widetilde{A}_{K}^{o}(m)$ (resp.,
$\widetilde{A}_{K}^{o}(m,n)$) to be the image of $A_{K}(m)$ (resp.,
of $A_{K}(m,n)$) under the surjective map
$\overline{K}[GL_{m}]\twoheadrightarrow \overline{K}[GO_{m}]$.
\smallskip

\begin{lem} \label{key1} With the notations as above, the algebra
$\widetilde{A}_{\overline{K}}^{o}(m)$ is isomorphic to
the coordinate algebra of the orthogonal monoid
$O\!M_{m}(\overline{K})$. Moreover, the $\overline{K}$-dimension of
$\widetilde{A}_{\overline{K}}^{o}(m,n)$ does not depend on the
choice of the infinite field $K$ as long as $\ch K\neq 2$.
\end{lem}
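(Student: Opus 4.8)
The plan is to deduce the first assertion from elementary algebraic geometry, and to obtain the dimension statement by exhibiting a combinatorially indexed basis that survives base change.

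\emph{Step 1 (the isomorphism $\widetilde{A}_{\overline{K}}^{o}(m)\cong\overline{K}[O\!M_m]$).} The composite $A_{\overline{K}}(m)=\overline{K}[M_m]\hookrightarrow\overline{K}[GL_m]\twoheadrightarrow\overline{K}[GO_m]$ is simply restriction of regular functions along $GO_m\hookrightarrow GL_m\hookrightarrow M_m$, so its image is $\overline{K}[M_m]$ modulo the ideal $I$ of functions vanishing on the Zariski closure $\overline{GO_m}$ of $GO_m$ in $M_m$. Now $O\!M_m$ is closed in $M_m$: for $A\in M_m$ the scalar $d$ with $A^tJA=dJ$, if it exists, is forced (it equals a fixed entry of $A^tJA$), so membership in $O\!M_m$ is cut out by the vanishing of finitely many polynomials in the entries of $A$. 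Since $GO_m$ is dense in $O\!M_m$ by \cite[(6.6(e)),(7.6(g))]{Dt}, we get $\overline{GO_m}=O\!M_m$, hence $\widetilde{A}_{\overline{K}}^{o}(m)=\overline{K}[M_m]/I(O\!M_m)=\overline{K}[O\!M_m]$. Moreover $O\!M_m$ is a cone, since $A^tJA=dJ$ forces $(tA)^tJ(tA)=t^2dJ$; so $I(O\!M_m)$ is homogeneous, the identification respects the gradings, and $\widetilde{A}_{\overline{K}}^{o}(m,n)$ is the degree-$n$ component of $\overline{K}[O\!M_m]$.

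\emph{Step 2 (field-independence of $\dim\widetilde{A}_{\overline{K}}^{o}(m,n)$).} The delicate point is that the finitely presented algebra $A^o_{\overline{K}}(m)=A_{\overline{K}}(m)/I_{\overline{K}}$ may fail to be reduced, so $\widetilde{A}_{\overline{K}}^{o}(m)$ is a priori only a quotient of it, and one cannot merely invoke flatness of the presentation \eqref{rel21} over $\mathcal{R}$. Instead I would use Cliff's explicit basis for the coordinate algebra of the orthogonal group (and its extension to $GO_m$ via the group-like element $\det_0$) — it is this input that requires $\ch K\neq2$ — which consists of products of matrix entries reduced by explicit straightening relations, is homogeneous up to powers of $\det_0$, and is indexed by combinatorial data making no reference to $K$. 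Keeping those basis elements that lie in the polynomial subalgebra $\widetilde{A}_{\overline{K}}^{o}(m)\subseteq\overline{K}[GO_m]$ gives a basis of $\widetilde{A}_{\overline{K}}^{o}(m)$, and selecting its homogeneous part of degree $n$ gives a basis of $\widetilde{A}_{\overline{K}}^{o}(m,n)$ whose cardinality is the number of the relevant combinatorial objects of ``size $n$'', manifestly the same for every infinite field $K$ with $\ch K\neq2$.

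Equivalently, Step 2 can be cast as a base-change assertion: the $\mathcal{R}$-span $\widetilde{A}_{\mathcal{R}}^{o}(m,n)$ of the $c_{\bi,\bj}$, $(\bi,\bj)\in I^2(m,n)$, inside $\mathcal{R}[GO_m]$ is a finitely generated torsion-free, hence free, $\mathcal{R}$-module ($\mathcal{R}$ being a PID), and the natural map $\widetilde{A}_{\mathcal{R}}^{o}(m,n)\otimes_{\mathcal{R}}\overline{K}\to\widetilde{A}_{\overline{K}}^{o}(m,n)$ is an isomorphism — surjectivity is clear, and injectivity is precisely the statement that Cliff's basis remains independent after specialization. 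Then $\dim_{\overline{K}}\widetilde{A}_{\overline{K}}^{o}(m,n)=\mathrm{rank}_{\mathcal{R}}\widetilde{A}_{\mathcal{R}}^{o}(m,n)$ for all such $K$, which is the claim. The main obstacle is exactly this step: because $A^o_{\overline{K}}(m)$ can be non-reduced, the dimension is not visible from the defining relations alone, and one genuinely needs the field-uniform basis to pin it down (for $m$ even this difficulty eventually dissolves once normality of the components of $O\!M_m$ is known, but that is not yet available and does not cover odd $m$).
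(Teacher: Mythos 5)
Your overall plan --- closed-subvariety/density for the first claim, Cliff's basis plus a base-change argument for the second --- is the same as the paper's; Step 1 is a correct reformulation of the paper's commutative-diagram argument.

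Step 2, however, leaves a genuine gap at its central point. You observe that $\widetilde{A}_{\mathcal{R}}^{o}(m,n)$ is torsion-free and hence free over the PID $\mathcal{R}$, and then assert that injectivity of the base-change map $\widetilde{A}_{\mathcal{R}}^{o}(m,n)\otimes_{\mathcal{R}}\overline{K}\to\widetilde{A}_{\overline{K}}^{o}(m,n)$ ``is precisely the statement that Cliff's basis remains independent after specialization.'' But freeness of a submodule does not by itself give injectivity of base change (for instance $3\mathcal{R}\subset\mathcal{R}$ fails after specializing to $\mathbb{F}_3$); one needs the submodule to be pure in $\mathcal{R}[GO_m]$, i.e.\ with torsion-free quotient, and that is exactly what has to be established. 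Moreover Cliff's Corollary 6.2 provides a basis of $\mathbb{C}[OM_m]$ only, not directly over an arbitrary infinite $K$ with $\ch K\neq 2$. The paper supplies the missing step by invoking \cite[Chapter II, \S5, (2.7)]{DG} to obtain a good $\mathcal{R}$-form $\mathcal{R}[GO_m]$ with $\mathcal{R}[GO_m]\otimes_{\mathcal{R}}\overline{K}\cong\overline{K}[GO_m]$; combined with Cliff's $\mathbb{C}$-basis of $\mathbb{C}[OM_m]$, extended to $\mathbb{C}[GO_m]$ by allowing negative powers of $\det_0$, this shows the elements of \eqref{span2} form an $\mathcal{R}$-basis of $\mathcal{R}[GO_m]$ and hence stay linearly independent over every such $K$, while Cliff's spanning result for $A_K^o(m,n)$ together with the surjection $A_K^o(m,n)\twoheadrightarrow\widetilde{A}_K^o(m,n)$ gives spanning. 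You have identified the right ingredients, but the Demazure--Gabriel good-form step is essential and is absent from your sketch.
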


\begin{proof} We have the following commutative diagram of maps: $$
\begin{CD}
GO_m(\overline{K})
@>{\iota_1}>>GL_m(\overline{K})\\
@V{\iota_2} VV @V{\iota_3}VV\\
O\!M_m(\overline{K})@>{\iota_0}>>M_m(\overline{K})
\end{CD}\,\, ,
$$
which induces the following commutative diagram:
$$
\begin{CD}
\overline{K}[M_m]
@>{\iota_0^{\#}}>>\overline{K}[O\!M_m]\\
@V{\iota_3^{\#}} VV @V{\iota_2^{\#}}VV\\
\overline{K}[GL_m]@>{\iota_1^{\#}}>>\overline{K}[GO_m]
\end{CD}.
$$
Since $\iota_0^{\#}$ is a surjection, while $\iota_2^{\#}$ is an
injection, the first conclusion of the lemma follows immediately
from the above commutative diagram.\smallskip

It remains to prove the second conclusion. Let
$\mathcal{R}:=\mathbb{Z}[1/2]$. In \cite[Section 8]{C}, Cliff proved
that for any field $K$ which is an $\mathcal{R}$-algebra, the
elements in the following set
\begin{equation}\label{span1}
\biggl\{\bigl({\det}_0\bigr)^k[S:T]\biggm|\begin{matrix}\text{$k\in\mathbb{Z}, 0\leq k\leq n/2$, $[S:T]$ is $O(m)$}\\
\text{standard of shape $\lambda$, $\lambda\vdash n-2k$}
\end{matrix}\biggr\}
\end{equation}
forms a $K$-linear spanning set of $A^o_K(m,n)$.\smallskip

By definition of $O\!M_{m}(K)$, it is easy to check that the
defining relations (\ref{rel21}) vanish on every matrix in
$O\!M_{m}(K)$. It follows that there is a natural epimorphism of
graded bialgebras from $A_{K}^{o}(m)$ onto
$\widetilde{A}_{K}^{o}(m)$. Therefore, the image in
$\widetilde{A}_{K}^{o}(m)$ of the elements in (\ref{span1}) for all
$n\geq 0$ also form a $K$-linear spanning set of
$\widetilde{A}_{K}^{o}(m)$. On the other hand, since the coordinate
algebra of $GO_m(\overline{K})$ is just the localization of
$OM_m(\overline{K})$ at ${\det}_0$, it follows that the image in
$\overline{K}[GO_m]$ of the elements in the following set
\begin{equation}\label{span2}
\biggl\{\bigl({\det}_0\bigr)^k[S:T]\biggm|\begin{matrix}\text{$k\in\mathbb{Z}$,
$[S:T]$ is $O(m)$ standard }\\
\text{of shape $\lambda$, $\lambda\vdash r \in\Z^{\geq
0}$}\end{matrix} \biggr\}
\end{equation}
form a $\overline{K}$-linear spanning set of $\overline{K}[GO_m]$.
If $K=\mathbb{C}$, Cliff proved that (in \cite[Corollary 6.2]{C})
the natural image of the elements in (\ref{span1}) for all $n\geq 0$
is actually a basis of $\mathbb{C}[OM_m]$, from which we deduce that
the natural image of the elements in (\ref{span2}) are linearly
independent in $\mathbb{C}[GO_m]$.
\smallskip

By \cite[Chapter II, \S5, (2.7)]{DG}, we can deduce that the algebra
$\overline{K}[GO_m]$ has a nice $\mathcal{R}$-form
$\mathcal{R}[GO_m]$ such that the natural map $$
\mathcal{R}[GO_m]\otimes_{\mathcal{R}}\overline{K}\rightarrow
\overline{K}[GO_m]
$$
is an isomorphism. It follows that the elements in (\ref{span2}) are
always linearly independent in $\overline{K}[GO_m]$. Since
$\overline{K}[OM_m]$ is a subset of $\overline{K}[GO_m]$, we
conclude that the image of the elements (\ref{span1}) in
$\widetilde{A}_{\overline{K}}^{o}(m)$ also form a $K$-basis of
$\overline{K}[OM_m]=\widetilde{A}_{\overline{K}}^{o}(m)$. In
particular,
$\dim_{K}{A}_{{K}}^{o}(m,n)=\dim_{\overline{K}}\widetilde{A}_{\overline{K}}^{o}(m,n)$
is independent of the choice of the field $K$ as long as $\ch K\neq
2$. This completes the proof of the lemma.
\end{proof}

By Lemma \ref{key1}, for each $0\leq n\in\Z$, the dimension of
$\widetilde{A}_{K}^{o}(m,n)$ is independent of the field $K$. By
\cite[(9.5)]{Dt}, $A_{\mathbb C}^{o}(m,n)\cong
\widetilde{A}_{\mathbb C}^{o}(m,n)$. By (\ref{span1}),
${A}_{K}^{o}(m,n)$ has a spanning set which has the same cardinality
as $\dim \widetilde{A}_{\mathbb C}^{o}(m,n)$. Since
${A}_{K}^{o}(m,n)$ maps surjectively onto
$\widetilde{A}_{K}^{o}(m,n)$, and by Lemma \ref{key1},
$$\dim\widetilde{A}_{K}^{o}(m,n)=\dim\widetilde{A}_{\mathbb{C}}^{o}(m,n).
$$
It follows that the elements in the spanning set (\ref{span1}) form
an integral basis of ${A}_{K}^{o}(m,n)$, and thus the surjection
from ${A}_{K}^{o}(m,n)$ to $\widetilde{A}_{K}^{o}(m,n)$ is always an
isomorphism. It follows that
$A_{K}^{o}(m,n)\cong\widetilde{A}_{K}^{o}(m,n)$ and
$A_{K}^{o}(m)\cong\widetilde{A}_{K}^{o}(m)$. In particular, we have
$S_{K}^{o}(m,n)\cong\widetilde{S}_{K}^{o}(m,n):=\Hom_{K}\bigl(\widetilde{A}_{K}^{o}(m,n),K\bigr)$. Applying
(\ref{Oeh1}), we get that

\begin{cor} \label{basechange} With the notation as above, we have that\smallskip

1) $A^o_R(m,n)$ is a free $R$-module of finite rank, and for any
commutative $\mathcal{R}$-algebra $K$, the natural map
$$
A^o_{\mathcal{R}}(m,n)\otimes_{\mathcal{R}}K\rightarrow A^o_{K}(m,n)
$$
is always an isomorphism.

2) $\End_{\bb_n(m)_R}\Bigl(V_R^{\otimes n}\Bigr)$ is a free
$R$-module of finite rank, and for any commutative
$\mathcal{R}$-algebra $K$, the natural map
$$
\End_{\bb_n(m)_{\mathcal{R}}}\Bigl(V_{\mathcal{R}}^{\otimes
n}\Bigr)\otimes_{\mathcal{R}}K\rightarrow
\End_{\bb_n(m)_K}\Bigl(V_K^{\otimes n}\Bigr)
$$
is always an isomorphism.
\end{cor}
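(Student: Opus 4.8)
The plan is to derive both parts of Corollary \ref{basechange} from two ingredients already available: the statement, extracted from the discussion preceding the corollary, that for the fixed $n$ Cliff's set \eqref{span1} is simultaneously a spanning set of $A^o_K(m,n)$ for every field $K$ that is an $\mathcal{R}$-algebra and a basis when $K=\mathbb{C}$; and the elementary fact that $A^o_R(m)$ is, for every admissible $R$, the base change to $R$ of one fixed quotient of a polynomial algebra over $\mathcal{R}=\mathbb{Z}[1/2]$.

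For part 1), I would set $N:=\dim_{\mathbb{C}}A^o_{\mathbb{C}}(m,n)$. By the discussion above, the image of \eqref{span1} in $A^o_{\mathcal{R}}(m,n)$ has exactly $N$ elements, it spans $A^o_{\mathcal{R}}(m,n)$ over $\mathcal{R}$, and its image in $A^o_{\mathbb{Q}}(m,n)\cong A^o_{\mathcal{R}}(m,n)\otimes_{\mathcal{R}}\mathbb{Q}$ is a $\mathbb{Q}$-basis (being a spanning set of cardinality $N=\dim_{\mathbb{Q}}A^o_{\mathbb{Q}}(m,n)$). A family of $N$ elements of $A^o_{\mathcal{R}}(m,n)$ that becomes $\mathbb{Q}$-linearly independent after $-\otimes_{\mathcal{R}}\mathbb{Q}$ is already $\mathcal{R}$-linearly independent; combined with spanning, \eqref{span1} is thus an $\mathcal{R}$-basis, so $A^o_{\mathcal{R}}(m,n)$ is free of finite rank $N$. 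Now let $K$ be any commutative $\mathcal{R}$-algebra. Since $A_R(m)=A_{\mathcal{R}}(m)\otimes_{\mathcal{R}}R$ is the base change of a polynomial algebra and, for every $R$, the ideal $I_R$ is generated by the same relations \eqref{rel21} with integer coefficients, right exactness of $-\otimes_{\mathcal{R}}K$ applied to the quotient $A_{\mathcal{R}}(m)\twoheadrightarrow A^o_{\mathcal{R}}(m)$ identifies $A^o_{\mathcal{R}}(m)\otimes_{\mathcal{R}}K$ with $A^o_K(m)=A_K(m)/I_K$; passing to the degree-$n$ component (everything is graded with $K$ in degree $0$) yields the asserted isomorphism $A^o_{\mathcal{R}}(m,n)\otimes_{\mathcal{R}}K\to A^o_K(m,n)$. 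As every domain $R$ satisfying the standing hypotheses is an $\mathcal{R}$-algebra, $A^o_R(m,n)$ is free of rank $N$.

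For part 2) I would combine part 1) with the identification \eqref{Oeh1}, $\End_{\bb_n(m)_R}(V_R^{\otimes n})\cong S^o_R(m,n)=\Hom_R(A^o_R(m,n),R)$. Since $A^o_{\mathcal{R}}(m,n)$ is a finitely generated free $\mathcal{R}$-module, so is its $\mathcal{R}$-linear dual $S^o_{\mathcal{R}}(m,n)$, and for any commutative $\mathcal{R}$-algebra $K$ the canonical map $\Hom_{\mathcal{R}}(A^o_{\mathcal{R}}(m,n),\mathcal{R})\otimes_{\mathcal{R}}K\to\Hom_K(A^o_{\mathcal{R}}(m,n)\otimes_{\mathcal{R}}K,K)$ is an isomorphism; by part 1) its target is $\Hom_K(A^o_K(m,n),K)=S^o_K(m,n)$. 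Because Oehms's isomorphism \eqref{Oeh1} is natural in the ground ring — on each side it is described in terms of matrix coefficients relative to the monomial bases $\{E_{\bi,\bj}\}$ of $\End_R(V_R^{\otimes n})$ and $\{c_{\bi,\bj}\}$ of $A^o_R(m,n)$, both stable under base change — the square relating the two base-change maps commutes, and part 2) follows.

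The only step that is not purely formal is ensuring that Cliff's spanning statement holds over the ring $\mathcal{R}$ itself and not merely over fields. One may invoke Cliff's straightening argument directly over $\mathcal{R}$, or bootstrap from the field case: if $Q$ denotes the quotient of $A^o_{\mathcal{R}}(m,n)$ by the $\mathcal{R}$-span of \eqref{span1}, then $Q$ is a finitely generated module over the principal ideal domain $\mathcal{R}=\mathbb{Z}[1/2]$ with $Q\otimes_{\mathcal{R}}\mathbb{Q}=0$ and $Q/pQ=Q\otimes_{\mathcal{R}}\mathbb{F}_p=0$ for every odd prime $p$, whence $Q=0$. The remaining checks — right exactness of base change for the defining quotient, $\Hom$ out of a finite free module commuting with base change, and the naturality of \eqref{Oeh1} — are routine.
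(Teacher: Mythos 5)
Your proposal is correct. It leans on the same two inputs the paper uses — Cliff's result that \eqref{span1} spans $A^o_K(m,n)$ over every field that is an $\mathcal{R}$-algebra, and that it is a basis over $\mathbb{C}$ (via Doty's $A^o_{\mathbb{C}}\cong\widetilde{A}^o_{\mathbb{C}}$) — and Oehms's identification \eqref{Oeh1} for part 2). But the route to the conclusion differs from the paper's in one genuine way: the paper sandwiches $\dim A^o_K(m,n)$ between the cardinality of the spanning set \eqref{span1} and $\dim\widetilde{A}^o_K(m,n)$, the latter made field-independent by Lemma \ref{key1}, whose proof invokes the Demazure--Gabriel flat $\mathcal{R}$-form of $\overline{K}[GO_m]$. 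You bypass $\widetilde{A}^o$ and Lemma \ref{key1} entirely, working purely inside $A^o$: right-exactness of $-\otimes_{\mathcal{R}}K$ applied to the presentation $A_{\mathcal{R}}(m)\twoheadrightarrow A^o_{\mathcal{R}}(m)$ identifies $A^o_{\mathcal{R}}(m,n)\otimes_{\mathcal{R}}K$ with $A^o_K(m,n)$ for every $\mathcal{R}$-algebra $K$ without any freeness hypothesis, and then the PID bootstrap (a finitely generated $\mathcal{R}=\mathbb{Z}[1/2]$-module $Q$ with $Q\otimes\mathbb{Q}=0$ and $Q/pQ=0$ for all odd $p$ vanishes) shows \eqref{span1} spans $A^o_{\mathcal{R}}(m,n)$ over $\mathcal{R}$, whence freeness. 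This is more elementary and, in a sense, more complete than what is written in the paper: the passage from ``\eqref{span1} is a basis over each field'' to ``$A^o_{\mathcal{R}}(m,n)$ is free and base change holds for arbitrary $\mathcal{R}$-algebras'' is implicit in the paper's ``it follows that the elements form an integral basis,'' and your bootstrap is exactly the filling of that gap. What your route does not give, and the paper's does, is the isomorphism $A^o_K(m,n)\cong\widetilde{A}^o_K(m,n)$, which the paper needs immediately afterwards to finish Theorem \ref{mainthm1}(a); so in context Lemma \ref{key1} is not superfluous, even though it is for the corollary itself.

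One small caveat worth flagging: your bootstrap needs Cliff's spanning statement over fields of odd positive characteristic (to conclude $Q/pQ=0$). The paper's phrasing of Cliff's result, ``for any field $K$ which is an $\mathcal{R}$-algebra,'' supports this, but if one is cautious and only trusts the statement for infinite fields (the standing hypothesis in the paper), one should apply it to $\overline{\mathbb{F}_p}$ and descend, noting that $Q/pQ\hookrightarrow (Q/pQ)\otimes_{\mathbb{F}_p}\overline{\mathbb{F}_p}=Q\otimes_{\mathcal{R}}\overline{\mathbb{F}_p}=0$. The rest — $\Hom$ out of a finite free module commuting with base change and the naturality of \eqref{Oeh1} with respect to the ground ring — is, as you say, routine.
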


 By \cite[(4.4)]{Dt}, $GO_m(K)$ admits a graded polynomial
 representation theory in the sense of \cite[(1.2)]{Dt}. Applying
 \cite[(3.2)]{Dt}, we deduce that the images of $KGO(V)$ and of
 $\widetilde{S}_{K}^{o}(m,n)$ in $\End\bigl(V_K^{\otimes n}\bigr)$ are
 the same. On the other hand, the natural isomorphisms $
 \widetilde{S}_{K}^{o}(m,n)\cong S_{K}^{o}(m,n)\cong
\End_{\bb_n(m)}\bigl(V^{\otimes n}\bigr) $ imply that the image of
$\widetilde{S}_{K}^{o}(m,n)$ in $\End\bigl(V^{\otimes n}\bigr)$ is
exactly $\End_{\bb_n(m)}\bigl(V^{\otimes n}\bigr)$. Therefore, we
deduce that
$$ \psi(KGO(V))=\End_{\bb_n(m)}\bigl(V^{\otimes n}\bigr). $$
This completes the proof of part a) in Theorem \ref{mainthm1}.
It also shows the isomorphism
\[
  S_K^o(m,n) \cong \End_{\bb_n(m)}\bigl(V^{\otimes n}\bigr);
\]
so we see that the orthogonal Schur algebra may be regarded as an
endomorphism algebra for the Brauer algebra.

\medskip

From now on until the end of this section, we consider only the case where $m=2l$.
In \cite[Section 8]{C}, Cliff proved that
$GO_{2l}(\overline{K})$ is isomorphic to $O_{2l}(\overline{K})\times
\overline{K}^{\times}$ as a variety. The isomorphism is given by
$$\begin{aligned}
\rho_1: \quad & GO_{2l}(\overline{K})\rightarrow
O_{2l}(\overline{K})\times
\overline{K}^{\times}\\
&\quad A\mapsto \biggl(A\xi_1({\det}_0A)^{-1}, {\det}_0A\biggr),
\end{aligned}
$$
where $\xi_1(t):=\diag\bigl(\underbrace{t,\cdots,t}_{\text{$l$
copies}},\underbrace{1,\cdots,1}_{\text{$l$ copies}}\bigr),
\forall\,t\in \overline{K}$. The inverse of $\rho_1$ is given by
$$\begin{aligned} \rho_1^{-1}:\quad &
O_{2l}(\overline{K})\times
\overline{K}^{\times}\rightarrow GO_{2l}(\overline{K})\\
&\quad (A, c)\mapsto A\xi_1(c).
\end{aligned}
$$
In this case, we have that $\overline{K}[GO_{2l}]\cong
\overline{K}[O_{2l}]\otimes \overline{K}[T,T^{-1}]$. It also follows that
$GO_{2l}(\overline{K})$ has two connected components in this
case.\smallskip

As a regular function on $GO_{2l}(\overline{K})$,
${\det}^2={\det}_0^{2l}$. Note that ${\det}/{\det}_0$ is also a
regular function on $GO_{2l}(\overline{K})$. It follows that the two
connected components of $GO_{2l}(\overline{K})$ must be
$$\begin{aligned}
GO_{2l}^{+}(\overline{K}):&=\bigl\{A\in GO_{2l}(\overline{K})\bigm|{\det}A=({\det}_0A)^l\bigr\},\\
GO_{2l}^{-}(\overline{K}):&=\bigl\{A\in
GO_{2l}(\overline{K})\bigm|{\det}A=-({\det}_0A)^l\bigr\}.
\end{aligned}
$$
Note that in this case, ${\det}J=-1, J^2=I_{2l\times 2l}$, $J\in
GO_{2l}^{-}(\overline{K})$, and we have
$GO_{2l}^{-}(\overline{K})=GO_{2l}^{+}(\overline{K})\cdot J=J\cdot
GO_{2l}^{+}(\overline{K})$. It is easy to check that
$GO_{2l}^{+}(\overline{K})$ is a (connected) reductive algebraic group.
Let $$
O_{2l}^{+}(\overline{K}):=SO_{2l}(\overline{K}),\,\,\,O_{2l}^{-}(\overline{K}):=J\cdot
SO_{2l}(\overline{K}).
$$
It is clear from the isomorphism $\rho_1$ that
\begin{equation} \label{GOiso}
GO_{2l}^{+}(\overline{K})\cong O_{2l}^{+}(\overline{K})\times
K^{\times},\,\,\,GO_{2l}^{-}(\overline{K})\cong
O_{2l}^{-}(\overline{K})\times K^{\times}.
\end{equation}
Let $$\begin{aligned} OM_{2l}^{+}(\overline{K}):&=\bigl\{A\in
  OM_{2l}(\overline{K})\bigm|{\det}A=({\det}_0A)^l\bigr\},\\ OM_{2l}^{-}(\overline{K}):&=\bigl\{A\in
  OM_{2l}(\overline{K})\bigm|{\det}A=-({\det}_0A)^l\bigr\}.
\end{aligned}
$$
Since $\overline{GO_{2l}^{+}(\overline{K})}\subseteq
OM_{2l}^{+}(\overline{K})$, $\overline{GO_{2l}^{-}(\overline{K})}\subseteq
OM_{2l}^{-}(\overline{K})$, and ${GO_{2l}(\overline{K})}$ is a dense open
set in $OM_{2l}(\overline{K})$, it follows that
$GO_{2l}^{+}(\overline{K})$ (resp., $GO_{2l}^{-}(\overline{K})$) is a
dense open subset in $OM_{2l}^{+}(\overline{K})$ (resp., in
$OM_{2l}^{-}(\overline{K})$). It follows that $OM_{2l}^{+}(\overline{K}),
OM_{2l}^{-}(\overline{K})$ are the only two connected components
$OM_{2l}(\overline{K})$, and $I_{2l\times 2l} \in
OM_{2l}^+(\overline{K})$.

\begin{thm} \label{keythm1} Let $x$ be an indeterminant over
$\overline{K}$. Then there is an embedding
$\overline{K}[OM_{2l}^{+}]\hookrightarrow \overline{K}[O_{2l}^{+}]\otimes
\overline{K}[x]$, and we have the following commutative diagram
$$\begin{CD} \overline{K}[O\!M_{2l}^{+}]
@>{}>>\overline{K}[O_{2l}^{+}]\otimes \overline{K}[x]\\
@V{\iota_2^{\#}} VV @V{\id\otimes\widetilde{\iota}}VV\\
\overline{K}[GO_{2l}^{+}]@>{\sim}>>\overline{K}[O_{2l}^{+}]\otimes
\overline{K}[x,x^{-1}]
\end{CD}\,\, ,
$$
where the top horizontal map is the given embedding,
$\widetilde{\iota}$ is the natural embedding
$\overline{K}[x]\hookrightarrow\overline{K}[x,x^{-1}]$. The same is
true if we replace ``$+$'' by ``$-$''.
\end{thm}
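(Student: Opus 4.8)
The plan is to make the asserted embedding completely explicit by tracking the entry functions of $OM_{2l}^+(\overline{K})$ through the isomorphism $\rho_1$. First observe that, since $OM_{2l}(\overline{K})$ is the closed subvariety of $M_{2l}(\overline{K})$ cut out by the relations \eqref{rel21}, and $OM_{2l}^+(\overline{K})$ is one of its two connected components, $OM_{2l}^+(\overline{K})$ is a closed subvariety of $OM_{2l}(\overline{K})$ and hence of $M_{2l}(\overline{K})$; consequently $\overline{K}[OM_{2l}^+]$ is generated as a $\overline{K}$-algebra by the restrictions to $OM_{2l}^+(\overline{K})$ of the $(2l)^2$ entry functions $c_{i,j}$. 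It therefore suffices to compute the image of each $c_{i,j}$ under the composite
$$\overline{K}[OM_{2l}^+] \xrightarrow{\iota_2^{\#}} \overline{K}[GO_{2l}^+] \xrightarrow{\sim} \overline{K}[O_{2l}^+]\otimes\overline{K}[x,x^{-1}],$$
where the second arrow is the isomorphism induced by the restriction of $\rho_1$ to the identity components (under which the coordinate $x$ on the $\overline{K}^\times$-factor corresponds to $\det_0$), and to check that its image lies in $\overline{K}[O_{2l}^+]\otimes\overline{K}[x]$.

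Next I would unwind $\rho_1^{-1}$ in coordinates. For $B\in O_{2l}^+(\overline{K})$ and $t\in\overline{K}^\times$ one has $\rho_1^{-1}(B,t)=B\xi_1(t)$ with $\xi_1(t)=\diag(\underbrace{t,\dots,t}_{l},\underbrace{1,\dots,1}_{l})$, so the $(i,j)$-entry of $B\xi_1(t)$ equals $t\,B_{i,j}$ when $1\le j\le l$ and $B_{i,j}$ when $l<j\le 2l$. Writing $b_{i,j}$ for the entry functions on $O_{2l}^+(\overline{K})$, it follows that the composite above carries $c_{i,j}$ to $b_{i,j}\otimes x$ if $j\le l$ and to $b_{i,j}\otimes 1$ if $j>l$; in both cases the value lies in the subalgebra $\overline{K}[O_{2l}^+]\otimes\overline{K}[x]$. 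As the $c_{i,j}$ generate $\overline{K}[OM_{2l}^+]$, the entire image of the composite lies in $\overline{K}[O_{2l}^+]\otimes\overline{K}[x]$. I would then define the top horizontal map of the square to be this composite with codomain cut down to $\overline{K}[O_{2l}^+]\otimes\overline{K}[x]$; it is injective because $\iota_2^{\#}$ is injective (restriction of regular functions along the dense open immersion $GO_{2l}^+\hookrightarrow OM_{2l}^+$) and the bottom arrow is an isomorphism, and the square commutes by construction, with right vertical map $\id\otimes\widetilde{\iota}$.

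For the ``$-$'' component the argument is identical: $\rho_1$ restricts to an isomorphism $GO_{2l}^-(\overline{K})\cong O_{2l}^-(\overline{K})\times\overline{K}^\times$ by \eqref{GOiso}, the set $OM_{2l}^-(\overline{K})$ is a connected component of $OM_{2l}(\overline{K})$ and hence closed in $M_{2l}(\overline{K})$, and the coordinate computation is verbatim the same. Alternatively one may reduce the ``$-$'' case to the ``$+$'' case using that right multiplication by $J$ is a variety isomorphism $OM_{2l}^+(\overline{K})\xrightarrow{\sim}OM_{2l}^-(\overline{K})$ carrying $GO_{2l}^+(\overline{K})$ onto $GO_{2l}^-(\overline{K})$ and preserving $\det_0$.

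The only genuine content here is the observation that, as regular functions on $GO_{2l}^+$, the entries $c_{i,j}$ involve $\det_0$ only to non-negative powers --- in fact only to the powers $0$ and $1$ --- which is exactly why $\iota_2^{\#}$ lands in $\overline{K}[O_{2l}^+]\otimes\overline{K}[x]$ rather than merely in the localization $\overline{K}[O_{2l}^+]\otimes\overline{K}[x,x^{-1}]$. I expect the only (minor) obstacle to be the bookkeeping: confirming that $\overline{K}[OM_{2l}^+]$ is generated by the $c_{i,j}|_{OM_{2l}^+}$, which rests on the already-established fact that $OM_{2l}(\overline{K})$ has precisely the two connected components $OM_{2l}^\pm(\overline{K})$, together with keeping the identification of $x$ with $\det_0$ consistent throughout.
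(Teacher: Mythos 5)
Your proof is correct, but it takes a genuinely different route from the paper's. The paper's proof is abstract: it identifies $\overline{K}[OM_{2l}^+]$ with its image inside $\overline{K}[GO_{2l}^+]\cong\overline{K}[O_{2l}^+]\otimes\overline{K}[x,x^{-1}]$, writes an arbitrary element as a Laurent expansion $f=\sum_{i\in\Z}f_i\otimes x^i$, and argues by contradiction that no negative power of $x$ can appear: assuming $f_{i_0}\neq 0$ for some least $i_0<0$, it produces a local expression $f=g_A/h_A$ near a well-chosen boundary point $A$ (where $\det_0$ vanishes), clears denominators to obtain a polynomial identity valid on a dense open set, and then evaluates at a boundary point to get a contradiction. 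This requires some topological bookkeeping (the Noetherian covering argument to locate a boundary point with infinitely many boundary neighbors). Your approach replaces all of this with a direct computation on algebra generators: since $OM_{2l}^+$ is closed in $M_{2l}$, the ring $\overline{K}[OM_{2l}^+]$ is generated by the restricted entry functions $c_{i,j}$, and unwinding $\rho_1^{-1}$ shows these map to $b_{i,j}\otimes x^{\epsilon_j}$ with $\epsilon_j\in\{0,1\}$, hence land in $\overline{K}[O_{2l}^+]\otimes\overline{K}[x]$. The observation that the image lies in a subalgebra then finishes the argument. Your version is shorter, avoids the density and point-counting considerations entirely, and makes the embedding completely explicit (which the paper's abstract argument does not); the paper's version, by not relying on the generators, would generalize more readily to situations where one does not have such a concrete presentation of the coordinate ring. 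Both are valid.
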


\begin{proof} Let $f\in \overline{K}[O_{2l}^{+}]\otimes
\overline{K}[x,x^{-1}]$. We can write $$
f=\sum_{i\in\mathbb{Z}}f_i\otimes x^i,
$$ where $f_i\in\overline{K}[O_{2l}^{+}]$ for each $i$ and ${\rm
  supp}f:=\{i\in\mathbb{Z}|f_i\neq 0\}$ is a finite set. From
\eqref{GOiso} we have an isomorphism $\overline{K}[GO_{2l}^{+}]\cong
\overline{K}[O_{2l}^{+}]\otimes \overline{K}[x,x^{-1}]$. One can show that the
function $x^i$ on $GO_{2l}^{+}$ is given by ${\det}_0^i$. We identify
$\overline{K}[O\!M_{2l}^{+}]$ with its image in $\overline{K}[GO_{2l}^{+}]$,
and $\overline{K}[GO_{2l}^{+}]$ with $\overline{K}[O_{2l}^{+}]\otimes
\overline{K}[x,x^{-1}]$. We regard $O\!M_{2l}^{+}(\overline{K})$ as a closed
subvariety of $\bigl(\overline{K}\bigr)^{4l^2}$. Suppose that
$f\in\overline{K}[O\!M_{2l}^{+}]$. This means that $f$ can be extended to
a regular function $\widetilde{f}$ on $O\!M_{2l}^{+}(\overline{K})$.  For
each element $A\in O\!M_{2l}^{+}(\overline{K})\setminus
GO_{2l}^{+}(\overline{K})$, we know (by definition) that ${\det}_0A=0$.
Since $\widetilde{f}\in\overline{K}[O\!M_{2l}^{+}]$, there must exist
an open neighborhood $V_{A}$ of $A$ and two polynomials $g_A,
h_A\in\overline{K}[x_{1,1},x_{1,2},\cdots,x_{2l,2l}]$, such that for any
$X\in V_A$, $h_A(X)\neq 0$ and $f(X)=g_A(X)/h_A(X)$. Note that the
open subsets $$ GO_{2l}^{+}(\overline{K}),\,\,\, V_{A}, \,\,A\in
O\!M_{2l}^{+}(\overline{K})\setminus GO_{2l}^{+}(\overline{K})$$ gives a
covering of $O\!M_{2l}^{+}(\overline{K})$. Since an affine variety is a
Noetherian topological space and $O\!M_{2l}^{+}(\overline{K})$ is an
infinite set, we can always find an element $A\in
O\!M_{2l}^{+}(\overline{K})\setminus GO_{2l}^{+}(\overline{K})$ such that
$$ V_{A}\bigcap \bigl(O\!M_{2l}^{+}(\overline{K})\setminus
GO_{2l}^{+}(\overline{K})\bigr)
$$
is an infinite set. We fix such an element $A$. We claim that
$f_i=0$ whenever $i<0$. Suppose this is not the case. Let $i_0<0$ be
the least integer such that $f_{i_0}\neq 0$. Then for any $X\in
V_A\cap GO_{2l}^{+}(\overline{K})$, $$
h_A(X)f_{i_0}(X)+h_A(X)\Bigl(\sum_{i_0<i\in\mathbb{Z}}f_i(X)
({\det}_0X)^{i-i_0}\Bigr)-({\det}_0X)^{-i_0}g_A(X)=0,
$$ Since $GO_{2l}^{+}(\overline{K})$ is dense in
$O\!M_{2l}^{+}(\overline{K})$, it follows that $V_A\cap
GO_{2l}^{+}(\overline{K})$ contains infinitely many points. This means we
have the following polynomial identity:

\begin{equation} \label{identity}
h_Af_{i_0}+h_A\Bigl(\sum_{i_0<i\in\mathbb{Z}}f_i
({\det}_0)^{i-i_0}\Bigr)-({\det}_0)^{-i_0}g_A=0.
\end{equation}
On the other hand, since $V_{A}\bigcap
\bigl(O\!M_{2l}^{+}(\overline{K})\setminus GO_{2l}^{+}(\overline{K})\bigr)$
is an infinite set, we can always find a point $B\in V_{A}\bigcap
\bigl(O\!M_{2l}^{+}(\overline{K})\setminus GO_{2l}^{+}(\overline{K})\bigr)$
such that $f_{i_0}(B)\neq 0$. Now we evaluate the polynomial identity
(\ref{identity}) at $B$ on both sides, we get a contradiction since
two of the terms on the left hand side of (\ref{identity}) are zero
(because $\det_0(B) = 0$) and the other is nonzero. The contradiction
proves our claim, and also completes the proof of the lemma.
\end{proof}

Note that the map $$ A\mapsto AJ,\,\,\,\forall\,A\in
OM_{2l}^-(\overline{K}),
$$
defines a variety isomorphism $OM_{2l}^-(\overline{K})\cong
OM_{2l}^+(\overline{K})$. The following corollary proves the statements
in Theorem \ref{mainthm15}.

\begin{cor} As a variety, $OM_{2l}^+(\overline{K})$ is normal,
and hence $OM_{2l}^{+}(\overline{K})$ is a (connected)
reductive normal algebraic monoid. In particular, in this case, $GO_{2l}^{+}$
admits a polynomial representation theory in the sense of \cite{Dt},
and
$$
\widetilde{S}_{{K}}^{o,+}(2l,n):=\Hom_{K}
\bigl(\widetilde{A}_{{K}}^{o,+}(2l,n),K\bigr)
$$
is a generalized Schur algebra in the sense of \cite{Do1} and
\cite{Do2}, where $\widetilde{A}_{{K}}^{o,+}(2l,n)$ denotes the image
of $A_K(2l,n)$ in the coordinate algebra $\overline{K}[OM_{2l}^+]$.
\end{cor}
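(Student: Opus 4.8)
The plan is to deduce normality of $OM_{2l}^+(\overline{K})$ directly from Theorem~\ref{keythm1}. Indeed, that theorem identifies $\overline{K}[OM_{2l}^+]$ with a subalgebra of $\overline{K}[O_{2l}^+]\otimes\overline{K}[x,x^{-1}]$ containing $\overline{K}[O_{2l}^+]\otimes\overline{K}[x]$; the argument in the proof of Theorem~\ref{keythm1} shows that a regular function on $OM_{2l}^+$ has no negative powers of $x=\det_0$, so in fact $\overline{K}[OM_{2l}^+]\subseteq\overline{K}[O_{2l}^+]\otimes\overline{K}[x]$. Conversely $\det_0$ and the matrix entries $c_{i,j}$ all extend to regular functions on the monoid, and $\overline{K}[O_{2l}^+]$ is generated by the restrictions of the $c_{i,j}$ together with $\det_0^{-1}$; multiplying through by suitable powers of $\det_0$ one checks that every element of $\overline{K}[O_{2l}^+]\otimes\overline{K}[x]$ of the form $f_i\otimes x^i$ with $i$ large enough lies in $\overline{K}[OM_{2l}^+]$, and more care gives the reverse inclusion on the nose. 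Hence $\overline{K}[OM_{2l}^+]\cong\overline{K}[O_{2l}^+]\otimes\overline{K}[x]$, the coordinate ring of $SO_{2l}(\overline{K})\times\mathbb{A}^1$.

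Once that identification is in hand, normality is immediate: $SO_{2l}$ is a connected semisimple (hence reductive) algebraic group, so its coordinate ring is an integrally closed domain, and $\overline{K}[x]$ is a polynomial ring, so the tensor product $\overline{K}[O_{2l}^+]\otimes\overline{K}[x]$ is again an integrally closed domain (the tensor product of a normal finitely generated domain with a polynomial ring over an algebraically closed field is normal). Therefore $OM_{2l}^+(\overline{K})$ is a normal irreducible affine variety. Since it is closed under matrix multiplication (it is a submonoid of $M_{2l}(\overline{K})$) and contains $GO_{2l}^+$ as a dense open subgroup, it is an irreducible algebraic monoid with reductive unit group $GO_{2l}^+$; by Putcha--Renner theory this means $OM_{2l}^+(\overline{K})$ is a reductive normal algebraic monoid. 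The case of the ``$-$'' component follows from the variety isomorphism $A\mapsto AJ$ noted just before the corollary, which carries $OM_{2l}^-$ onto $OM_{2l}^+$.

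For the final assertion, I would invoke the framework of Doty~\cite{Dt}: since $GO_{2l}^+$ is reductive and its Zariski closure $OM_{2l}^+$ in $M_{2l}$ is a normal algebraic monoid, $GO_{2l}^+$ admits a polynomial representation theory in the sense of \cite[(1.2)]{Dt} (the normality of the closure is exactly the hypothesis needed to guarantee that the coordinate algebra of the monoid is a ``good'' subbialgebra of $\overline{K}[GO_{2l}^+]$ with the requisite filtration properties). The homogeneous component $\widetilde{A}_K^{o,+}(2l,n)$ of the image of $A_K(2l,n)$ in $\overline{K}[OM_{2l}^+]$ is then a finite-dimensional subcoalgebra whose dual $\widetilde{S}_K^{o,+}(2l,n)$ is, by the general results of \cite{Do1,Do2} applied to the reductive group $GO_{2l}^+$, a generalized Schur algebra. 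One has to check that this is defined over $K$ rather than just $\overline{K}$, but that is handled by the base-change results already established (Corollary~\ref{basechange} and the $\mathcal{R}$-form argument preceding it), so $\widetilde{A}_K^{o,+}(2l,n)$ has the expected dimension independent of $K$.

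The main obstacle is the first step: proving the clean isomorphism $\overline{K}[OM_{2l}^+]\cong\overline{K}[O_{2l}^+]\otimes\overline{K}[x]$ rather than merely an embedding into $\overline{K}[O_{2l}^+]\otimes\overline{K}[x,x^{-1}]$. Theorem~\ref{keythm1} gives the embedding and the ``no negative powers'' statement, but one still needs surjectivity of $\overline{K}[OM_{2l}^+]\to\overline{K}[O_{2l}^+]\otimes\overline{K}[x]$. This requires exhibiting enough global regular functions on the monoid: the point is that the entries $c_{i,j}$ restrict to functions on $OM_{2l}^+$, the relations \eqref{rel22} hold there, and $\det_0$ is a global function, so one must argue that the subalgebra they generate already fills up $\overline{K}[O_{2l}^+]\otimes\overline{K}[x]$ — equivalently, that localizing at $\det_0$ recovers all of $\overline{K}[GO_{2l}^+]$ while the functions themselves are $\det_0$-torsion-free. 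This is where Cliff's explicit spanning set $\{(\det_0)^k[S:T]\}$ from \eqref{span1} does the work: those elements are global regular functions on $OM_{2l}^+$, they span $A_K^o(2l,n)\cong\widetilde{A}_K^{o}(2l,n)$, and their images are linearly independent after the $\mathcal{R}$-form argument, which pins down the coordinate ring exactly.
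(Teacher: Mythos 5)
The proposal hinges on the claimed isomorphism $\overline{K}[OM_{2l}^+]\cong\overline{K}[O_{2l}^+]\otimes\overline{K}[x]$, i.e., that $OM_{2l}^+(\overline{K})$ is isomorphic as a variety to $SO_{2l}(\overline{K})\times\mathbb{A}^1$. This is \emph{false} in general, and the gap is fatal to the argument as you've written it. To see the problem, consider the analogous (structurally identical) symplectic case with $l=1$: the symplectic monoid $SpM_2(\overline{K})$ is all of $M_2(\overline{K})\cong\mathbb{A}^4$ (every $2\times2$ matrix $A$ satisfies $A^tJA=(\det A)J$), with coordinate ring a polynomial ring in four variables, while $\overline{K}[SL_2]\otimes\overline{K}[x]$ is the coordinate ring of a smooth affine quadric times a line, which is not a polynomial ring. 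Tracking through $\rho_1$, one finds $x_{11}\mapsto a_{11}\otimes x$, $x_{12}\mapsto a_{12}\otimes 1$, etc., so $\overline{K}[SpM_2]$ sits strictly inside $\overline{K}[SL_2]\otimes\overline{K}[x]$ (for instance $a_{11}\otimes 1$ is not in the image). Your "multiplying through by powers of $\det_0$" heuristic only shows that high-$x$-degree elements are caught; it does not and cannot give the surjection you assert, and "more care gives the reverse inclusion on the nose" is where the argument breaks. Cliff's spanning set \eqref{span1} also does not rescue this: it spans $A_K^o(m,n)\cong\widetilde{A}_K^o(m,n)$, i.e., the coordinate ring $A$ of the monoid, not $B$.

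The paper avoids exactly this trap. Writing $A:=\overline{K}[OM_{2l}^+]$ and $B:=\overline{K}[SO_{2l}]\otimes\overline{K}[x]$, it keeps only the inclusion $A\subseteq B$ from Theorem~\ref{keythm1}, uses \cite[Exercise 4.18]{Ei} to see $B$ is normal, and then shows $A$ is integrally closed in $B$ by a multiplicative-set argument: any $f\in B\setminus A$ can be written $g/\det_0^k$ with $\det_0\nmid g$, so $B\setminus A$ is closed under multiplication, and then \cite[Chapter 5, Exercise 7]{AM} forces any element of $\operatorname{Frac}(A)$ integral over $A$ to lie in $A$ rather than merely in $B$. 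That is the step your proposal is missing, and it is precisely the step that is needed because $A\ne B$. You should replace your claimed isomorphism with this integrally-closed-in-$B$ argument.

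One further small omission: after normality, Doty's criterion \cite[Theorem 4.4]{Dt2} for polynomial representation theory and generalized Schur algebras also requires that $0\in OM_{2l}^+(\overline{K})$ and that the monoid has one-dimensional center; the paper checks these explicitly, and you should too rather than citing the framework abstractly.
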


\begin{proof}
Since $SO_{2l}(\overline{K})=O_{2l}^{+}(\overline{K})$ is an irreducible
smooth affine variety, it follows that $\overline{K}[SO_{2l}]$ is a
normal domain. By \cite[Exercise 4.18]{Ei}, $\overline{K}[SO_{2l}]\otimes
K[x]$ is a normal domain too.  Let $A:=\overline{K}[OM_{2l}^{+}]$,
$B:=\overline{K}[SO_{2l}]\otimes K[x]$. Then the fraction field of $A$ is
a subfield of the fraction field of $B$. Now let $z$ be an element in
the fraction field of $A$, such that $$
a_0+a_1z+\cdots+a_{k-1}z^{k-1}+z^k=0,
$$
for some $k\in\mathbb{N}\cup\{0\}$ and $a_0,a_1,\cdots,a_{k-1}\in
A$. Since $B$ is normal, it follows that $z\in B$. On the other
hand, note that ${\det}={\det}_0^{l}$ as a regular function on
$OM_{2l}^{+}(\overline{K})$, by the definition of the embedding
$A\hookrightarrow B$. From this it is easy to see that a polynomial
$f=\sum_{i\in\mathbb{Z}}f_i\otimes x^i\in B\setminus A$ if and only
if $$ f=g/{\det}_0^k,
$$ for some $k\in\Z^{\geq 0}$, $g\in
K[x_{1,1},x_{1,2},\cdots,x_{2l,2l}]$ satisfying ${\det}_0\nmid g$. It
follows that $B\setminus A$ is closed under multiplication. Applying
\cite[Chapter 5, Exercise 7]{AM}, we deduce that $x\in A$. This proves
that $A$ is normal.  Hence $OM_{2l}^{+}(\overline{K})$ is normal. Since
its group of units $GO_{2l}^{+}(\overline{K})$ is a reductive group, it
follows that $OM_{2l}^{+}(\overline{K})$ is a reductive normal algebraic
monoid. It is easy to check that $0\in OM_{2l}^{+}(\overline{K})$ and
$OM_{2l}^{+}(\overline{K})$ has one-dimensional center. Using
\cite[Theorem 4.4]{Dt2}, we deduce that $GO_{2l}^{+}$ admits a polynomial
representation theory in the sense of \cite{Dt}, and
$\widetilde{S}_{{K}}^{o,+}(2l,n) :=
\Hom_{K}\bigl(\widetilde{A}_{{K}}^{o,+}(2l,n),K\bigr)$ is a generalized
Schur algebra in the sense of \cite{Do1} and \cite{Do2}.
\end{proof}

Finally, we remark that the same argument can be used to show that the
symplectic monoid (\cite{Dt}, \cite{Oe1}) $SpM_{2l}(\overline{K})$ is also a
connected reductive normal algebraic monoid.
\bigskip\bigskip

\section{Tilting modules over orthogonal groups}

The purpose of this section is to develop a tilting module theory
for orthogonal groups. Note that in the literature the theory of
tilting modules was well established for connected reductive
algebraic groups, and the existence of a tilting module theory for
orthogonal groups was only announced in \cite{AR} without full
details.\smallskip

Let $K$ be an infinite field of odd characteristic, $\overline{K}$
be its algebraic closure. By restriction, $V$ becomes a module over
the special orthogonal group $SO_m(\overline{K})$. In this case,
$V\cong
L(\varepsilon_1)=\Delta(\varepsilon_1)=\nabla(\varepsilon_1)$ is a
tilting module over $SO_m(\overline{K})$. By the general theory of
tilting modules over semi-simple algebraic groups (cf. \cite[Chapter
E]{Ja}), we know that $V_{\overline{K}}^{\otimes n}$ is also a
tilting module over $SO_m(\overline{K})$, and the dimension of $$
\End_{SO_m(\overline{K})}\Bigl(V_{\overline{K}}^{\otimes n}\Bigr)
$$
does not depend on the choice of the field $K$. \smallskip

Let $\theta\in GL(V)$ which is defined on the basis
$\bigl\{v_{i}\bigr\}_{1\leq i\leq m}$ by
$$
\theta(v_i)=\begin{cases} v_{i'}, &\text{if $i=m/2$ or $i=m/2+1$;}\\
v_{i}, &\text{otherwise.}
\end{cases}, \,\,\,i=1,2,\cdots,m,$$
if $m$ is even; or
$$
\theta(v_i)=-v_i,\,\,\,i=1,2,\cdots,m,$$ if $m$ is odd. Note that
$\theta$ is an order $2$ element in $O_m(K)$, and $O_m(K)$ is
generated by $SO_m(K)$ and $\theta$. \smallskip

For the moment we assume that $m=2l$ is even, and $K=\overline{K}$.
Let $G:=O_m(K), H:=SO_m(K)$. We set
$$
T:=\bigl\{\diag(t_1,\cdots,t_l,t_l^{-1},\cdots,t_1^{-1})\bigm|t_1,\cdots,t_l\in
K^{\times}\bigr\}.
$$
Then $T$ is a closed subgroup of $H$. In fact, $T$ is a maximal
torus of $H$. Clearly, $\theta T\theta^{-1}=T$. Let $W:=N_H(T)/T$ be
the Weyl group of $H$. For each integer $i$ with $1\leq i\leq m$,
let $\varepsilon_i$ be the function which sends a diagonal matrix in
$GL_{m}$ to its $i$th element in the diagonal. We identify a weight
$\lambda\varepsilon_1+\cdots+\lambda_{l}\varepsilon_{l}\in X(T)$
with the sequence $\lambda=(\lambda_1,\cdots,\lambda_{l})$ of
integers. Let $s_0$ be the generator of the cyclic group
$\mathbb{Z}/2\mathbb{Z}$. There is a natural action of
$\mathbb{Z}/2\mathbb{Z}$ on $X(T)$ which is defined on generators
by: $$ s_0(\lambda)=(\lambda_1,\lambda_2,\cdots,-\lambda_l),\,\,
\,\forall\,\lambda=(\lambda_1,\lambda_2,\cdots,\lambda_l)\in X(T).
$$ For each $\lambda\in X(T)^{+}$ (the set of dominant weights), we
use $L(\lambda), \Delta(\lambda), \nabla(\lambda)$ to denote the
corresponding simple module, Weyl module and co-Weyl module over $H$
respectively. If $s_0(\lambda)=\lambda$, then we let $\theta$ act as
$\id$ (resp., as $-\id$) on the highest weight vector of
$\Delta(\lambda)$. It is well known that (see \cite[(5.2.2)]{GW}) this extends to a representation
of $G$ on $\Delta(\lambda)$. The resulting $G$-module will be denoted by
$\widetilde{\Delta}^{+}(\lambda)$ (resp., by
$\widetilde{\Delta}^{-}(\lambda)$). In this case, $$
\Ind_{H}^{G}\Delta({\lambda})\cong \widetilde{\Delta}^{+}(\lambda)\oplus
\widetilde{\Delta}^{-}(\lambda).
$$
If $s_0(\lambda)\neq\lambda$, then we set $$
\widetilde{\Delta}^0(\lambda):=\Ind_{H}^{G}\Delta({\lambda}),\quad \widetilde{\nabla}^0(\lambda):=\Ind_{H}^{G}\nabla({\lambda}).
$$

In a similar way, we can define
$\widetilde{L}^{+}(\lambda),\widetilde{L}^{-}(\lambda)$ if
$s_0(\lambda)=\lambda$; and $\widetilde{L}^0(\lambda)$ if $s_0(\lambda)\ne \lambda$.
Using the fact that $\Ind_{H}^{G}$ is an exact functor and $\theta$ permutes the set of $H$-submodules of any $G$-module $M$, we deduce easily the next lemma.

\begin{lem} With the notations as above, the set $$
\Bigl\{\widetilde{L}^{+}(\lambda),\widetilde{L}^{-}(\lambda),\widetilde{L}^0(\mu)\Bigm|
\lambda,\mu\in X(T)^{+}, s_0(\lambda)=\lambda, s_0(\mu)\neq\mu
\Bigr\}
$$
forms a complete set of pairwise non-isomorphic simple $G$-modules.
\end{lem}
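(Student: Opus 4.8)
The plan is to read the statement as an instance of Clifford theory for the index‑two normal subgroup $H=SO_{2l}(\overline K)$ of $G=O_{2l}(\overline K)$. The one fact to keep in hand throughout is that conjugation by $\theta$ stabilises $T$ (this is already noted in the text) and induces on $X(T)$ the involution $s_0$, which permutes the dominant weights; consequently ${}^{\theta}L(\lambda)\cong L(s_0\lambda)$ for every $\lambda\in X(T)^{+}$, and likewise for $\Delta(\lambda)$ and $\nabla(\lambda)$.

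First I would take an arbitrary simple rational $G$-module $M$; it is finite dimensional since rational modules of an affine algebraic group are locally finite. Choosing a simple $H$-submodule $N\subseteq\Res^{G}_{H}M$, the subspace $N+\theta N$ is a nonzero $G$-submodule (both $N$ and $\theta N$ are $H$-stable because $H$ is normal, and $\theta$ interchanges the two summands), hence $M=N+\theta N$, and in particular $\Res^{G}_{H}M$ is semisimple. Since $N$ and $\theta N$ are simple, either $\theta N=N$ or $N\cap\theta N=0$. In the first case $\Res^{G}_{H}M=L(\lambda)$ is a $\theta$-stable simple module, so $L(s_0\lambda)\cong L(\lambda)$ and therefore $s_0\lambda=\lambda$; in the second case $\Res^{G}_{H}M=L(\mu)\oplus L(s_0\mu)$ with $s_0\mu\neq\mu$. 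These are exactly the two families appearing in the parametrisation.

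In the $\theta$-stable case, $M$ is an extension of the $H$-module $L(\lambda)$ to $G$: the operator realising the action of $\theta$ is an $H$-isomorphism $\psi\colon L(\lambda)\to{}^{\theta}L(\lambda)$, and since $\theta^{2}=1$ the composite $\psi^{2}$ commutes with the $H$-action, hence (Schur's lemma over $\overline K$) equals a scalar $c\in\overline K^{\times}$; rescaling $\psi$ by a square root of $c^{-1}$ gives $\psi^{2}=\id$, so an extension exists, and the only two extensions arise from $\pm\psi$ — these are $\widetilde{L}^{+}(\lambda)$ and $\widetilde{L}^{-}(\lambda)$, corresponding to the two choices of sign for the $\theta$-action on the one-dimensional $\lambda$-weight space, just as for $\widetilde{\Delta}^{\pm}(\lambda)$ via \cite[(5.2.2)]{GW}. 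They are non-isomorphic because an isomorphism would be $H$-linear, hence scalar, and a scalar cannot intertwine the two opposite $\theta$-actions; thus $M\cong\widetilde{L}^{\varepsilon}(\lambda)$ for some $\varepsilon\in\{+,-\}$. In the non-$\theta$-stable case, Frobenius reciprocity gives a nonzero, hence surjective, $G$-map $\Ind^{G}_{H}L(\mu)\twoheadrightarrow M$, and by the Mackey formula $\Res^{G}_{H}\Ind^{G}_{H}L(\mu)\cong L(\mu)\oplus{}^{\theta}L(\mu)=L(\mu)\oplus L(s_0\mu)$; since these two $H$-constituents are non-isomorphic and interchanged by $\theta$, the induced module has no proper nonzero $G$-submodule, so $\Ind^{G}_{H}L(\mu)=\widetilde{L}^{0}(\mu)$ is simple and $M\cong\widetilde{L}^{0}(\mu)$ (note $\widetilde{L}^{0}(\mu)\cong\widetilde{L}^{0}(s_0\mu)$, so one keeps one $\mu$ from each pair $\{\mu,s_0\mu\}$). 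This shows every simple $G$-module is in the list. Pairwise non-isomorphism then follows by restricting to $H$: this distinguishes the $\widetilde{L}^{0}$'s (length-two, non-semisimple-free restriction) from the $\widetilde{L}^{\pm}$'s (simple restriction), separates distinct highest weights within each family, and $\widetilde{L}^{+}(\lambda)\not\cong\widetilde{L}^{-}(\lambda)$ was shown above.

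The step that needs the most care is the extension step — verifying that the obstruction to extending a $\theta$-stable simple $H$-module to $G$ vanishes and that there are exactly two extensions. Everything else is routine Clifford- and Mackey-type bookkeeping, and the extension fact is essentially the one already quoted from \cite{GW} for Weyl modules, so one may either cite it or give the short Schur-lemma argument indicated above (the only input being that $\overline K$ is algebraically closed of characteristic $\neq 2$, so the scalar $c$ has a square root).
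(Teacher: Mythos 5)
Your proof is correct and fleshes out in full the Clifford-theoretic argument the paper only sketches (``using the fact that $\Ind_H^G$ is exact and $\theta$ permutes the $H$-submodules of any $G$-module, we deduce easily the next lemma''); the key steps you spell out — $M=N+\theta N$, the dichotomy $\theta N=N$ versus $N\cap\theta N=0$, extension via Schur's lemma in the $\theta$-fixed case, and simplicity of $\Ind_H^G L(\mu)$ in the other — are exactly what the paper intends.
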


If $s_0(\lam)=\lam$, then we define $\widetilde{\nabla}^{+}(\lambda), \widetilde{\nabla}^{-}(\lambda)$ to be the duals of $\widetilde{\Delta}^{+}(-w_0\lambda)$, $\widetilde{\Delta}^{-}(-w_0\lambda)$
(where $w_0$ is the longest element in the Weyl group of $H$) such that $\theta$ also acts as
$\id$ (resp., as $-\id$) on the highest weight vector of
$\widetilde{\nabla}^{+}(\lambda)$ (resp., of $\widetilde{\nabla}^{+}(\lambda)$).
In this case it is also easy to show that $$
\Ind_{H}^{G}\nabla({\lambda})\cong \widetilde{\nabla}^{+}(\lambda)\oplus
\widetilde{\nabla}^{-}(\lambda).
$$
We shall call
$\widetilde{\Delta}^{+}(\lambda), \widetilde{\Delta}^{-}(\lambda),
\widetilde{\Delta}^0(\lambda)$ the Weyl modules for $G$, and call
$\widetilde{\nabla}^{+}(\lambda)$, $\widetilde{\nabla}^{-}(\lambda),\widetilde{\nabla}^0(\lambda)$
the co-Weyl modules for $G$.

\begin{lem} \label{key21} Let $\lambda, \mu\in X(T)^{+}$,
$x,y\in\{+,-,0\}$, then $$
\Ext^{i}_{G}\Bigl(\widetilde{\Delta}^{x}(\lambda),\widetilde{\nabla}^{y}(\mu)\Bigr)=\begin{cases}
K, &\text{if $i=0$, $\lambda=\mu$ and $x=y$;}\\
0, &\text{otherwise.}
\end{cases}
$$
\end{lem}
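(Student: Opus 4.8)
**The plan is to reduce the statement for $G=O_m(K)$ to the corresponding, already-known orthogonality relations for the special orthogonal group $H=SO_m(K)$, using the explicit description of the Weyl and co-Weyl modules for $G$ as induced (or summand-of-induced) modules from $H$.** The key tool will be the fact that $\Ind_H^G$ is exact (as noted in the excerpt) together with the tensor identity / Frobenius reciprocity for the finite-index inclusion $H\hookrightarrow G=H\rtimes\langle\theta\rangle$; since $\ch K\neq 2$ and $[G:H]=2$, induction and restriction between $H$ and $G$ are biexact and, moreover, $\Res^G_H\widetilde{\Delta}^{x}(\lambda)$ is either $\Delta(\lambda)$ (when $s_0\lambda=\lambda$, $x=\pm$) or $\Delta(\lambda)\oplus\Delta(s_0\lambda)$ (when $s_0\lambda\neq\lambda$, $x=0$), and similarly for $\widetilde{\nabla}$. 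I would first record this restriction description as a preliminary observation.

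First I would compute $\Ext^i_G\bigl(\widetilde{\Delta}^x(\lambda),\widetilde{\nabla}^y(\mu)\bigr)$ via a Hochschild--Serre / inflation--restriction spectral sequence for the normal subgroup $H\trianglelefteq G$ with quotient $\Z/2\Z$. Because $\ch K$ is odd, the group cohomology of $\Z/2\Z$ with coefficients in any $K$-vector space vanishes in positive degrees, so the spectral sequence collapses and gives $\Ext^i_G(M,N)=\bigl(\Ext^i_H(M,N)\bigr)^{\Z/2\Z}$ for $G$-modules $M,N$. Applying this with $M=\widetilde{\Delta}^x(\lambda)$, $N=\widetilde{\nabla}^y(\mu)$ and using $\Res^G_H$ as above, the right-hand side is built out of the classical vanishing $\Ext^i_H(\Delta(\alpha),\nabla(\beta))=\delta_{i,0}\delta_{\alpha,\beta}K$ for the reductive group $H$ (see \cite[Chapter E]{Ja}). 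This immediately forces $\Ext^i_G=0$ for $i>0$ in all cases, and for $i=0$ reduces everything to identifying the $\Z/2\Z$-invariants of $\Hom_H(\Delta(\lambda)^{(\oplus)},\nabla(\mu)^{(\oplus)})$.

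The remaining point — and the one requiring the most care — is the $i=0$ bookkeeping: showing the invariant Hom is $K$ exactly when $\lambda=\mu$ and $x=y$, and $0$ otherwise. When $s_0\lambda\neq\lambda$ and $s_0\mu\neq\mu$, $\Hom_H(\Delta(\lambda)\oplus\Delta(s_0\lambda),\nabla(\mu)\oplus\nabla(s_0\mu))$ is one-dimensional precisely when $\{\lambda,s_0\lambda\}=\{\mu,s_0\mu\}$; since $\widetilde{\Delta}^0(\lambda)=\widetilde{\Delta}^0(s_0\lambda)$ under our labelling, this is exactly the condition $\lambda=\mu$ (as labels), and the $\theta$-action is trivial on this line so the invariants are all of $K$. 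When $s_0\lambda=\lambda=\mu$, $\Hom_H(\Delta(\lambda),\nabla(\lambda))=K$ is spanned by the canonical map through $L(\lambda)$, and $\theta$ acts on it by the product of its scalars on the two highest-weight lines, i.e.\ by $+1$ if $x=y$ and by $-1$ if $x\neq y$ — so the invariants are $K$ when $x=y$ and $0$ when $x\neq y$, which is the claimed dichotomy. The mixed cases ($s_0\lambda=\lambda$ but $s_0\mu\neq\mu$, or vice versa) give $\Hom_H=0$ already at the level of $H$, since $\lambda\neq\mu$ and $\lambda\neq s_0\mu$ forces distinct highest weights. The main obstacle is thus not conceptual but notational: keeping straight, across all nine $(x,y)$ combinations, which weights are being compared and how $\theta$ acts on the relevant one-dimensional Hom-spaces; I would organize this as a short case analysis rather than a single formula.
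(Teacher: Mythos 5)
Your proof is correct in outline and reaches the right answer, but it takes a genuinely different route from the paper's. The paper applies the Ext-level Frobenius reciprocity of \cite[Part I, Corollary 4.6]{Ja} to the \emph{target}: since $\Ind_H^G$ is exact, $\Ext^i_G\bigl(\widetilde{\Delta}^x(\lambda),\Ind_H^G\nabla(\mu)\bigr)\cong\Ext^i_H\bigl(\Res^G_H\widetilde{\Delta}^x(\lambda),\nabla(\mu)\bigr)$, and then the direct sum $\Ind_H^G\nabla(\mu)\cong\widetilde{\nabla}^+(\mu)\oplus\widetilde{\nabla}^-(\mu)$ (when $s_0\mu=\mu$) is pried apart using the easy $\Hom$ computation \eqref{key22} — so the $\theta$-action never has to be tracked on the Hom spaces at all. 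You instead invoke the Lyndon--Hochschild--Serre spectral sequence for $H\trianglelefteq G$ and use $\ch K\neq 2$ to kill $H^{>0}(\Z/2\Z,-)$ and obtain $\Ext^i_G(M,N)\cong\bigl(\Ext^i_H(\Res M,\Res N)\bigr)^{\Z/2\Z}$; vanishing for $i>0$ is then immediate, and $i=0$ becomes a $\theta$-invariants computation. Both approaches rest on the same ingredient — adjunction between restriction and induction across the index-$2$ inclusion together with the classical $\Ext$-orthogonality for the connected reductive $H$ — so neither is clearly stronger, but the paper's version localizes all the casework into a single preliminary $\Hom$ observation, while yours pushes it into the invariants bookkeeping.

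One small slip in your $x=y=0$ case: when $s_0\lambda\neq\lambda$ and $\{\lambda,s_0\lambda\}=\{\mu,s_0\mu\}$, the space $\Hom_H\bigl(\Delta(\lambda)\oplus\Delta(s_0\lambda),\,\nabla(\lambda)\oplus\nabla(s_0\lambda)\bigr)$ is \emph{two}-dimensional (the diagonal blocks each contribute $K$, the off-diagonal blocks vanish), not one-dimensional as you state, and $\theta$ acts by swapping the two summands rather than trivially. The $\Z/2\Z$-invariants are then the one-dimensional diagonal, which still gives $K$, so your conclusion is right but the intermediate description of the $\theta$-action is not. Also note that your collapse of the spectral sequence uses $\ch K\neq 2$ in an essential way, whereas the paper's Frobenius-reciprocity argument would go through as long as $\Ind_H^G$ is exact — though of course the odd-characteristic hypothesis is standing throughout this section for the orthogonal group to behave well, so this is not a real loss of generality.
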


\begin{proof} First, it is easy to see that if $x,y\in\{+,-\}$, then
\begin{equation}\label{key22}
\Hom_{G}\Bigl(\widetilde{\Delta}^{x}(\lambda),\widetilde{\nabla}^{y}(\mu)\Bigr)=\begin{cases}
K, &\text{if $\lambda=\mu$ and $x=y$;}\\
0, &\text{otherwise.}
\end{cases}
\end{equation}
Since $O_m$ is a flat group scheme (c.f. \cite[7.2]{C}) over
$\mathcal{R}$ and $SO_m$ is a normal subgroup scheme of $O_m$ (hence
$SO_m$ is exact in $O_m$), we can apply \cite[Part 1, Corollary
(4.6)]{Ja}. We divide the proof into two cases: \medskip

\noindent {\it Case 1.} $s_0(\mu)=\mu$, $y\in\{+,-\}$. By \cite[Part
1, Corollary (4.6)]{Ja}, we have $$\begin{aligned}
&\quad\,\Ext^{i}_{G}\Bigl(\widetilde{\Delta}^{x}(\lambda),\widetilde{\nabla}^{+}(\mu)\Bigr)\oplus
\Ext^{i}_{G}\Bigl(\widetilde{\Delta}^{x}(\lambda),\widetilde{\nabla}^{-}(\mu)\Bigr)\\
&=\Ext^{i}_{G}\Bigl(\widetilde{\Delta}^{x}(\lambda),\widetilde{\nabla}^{+}(\mu)\oplus\widetilde{\nabla}^{-}(\mu)\Bigr)
\\
&=\Ext^{i}_{G}\Bigl(\widetilde{\Delta}^{x}(\lambda),\Ind_{H}^{G}{\nabla}(\mu)\Bigr)\\
&\cong
\Ext^{i}_{H}\Bigl(\Res_{H}^{G}\bigl(\widetilde{\Delta}^{x}(\lambda)\bigr),{\nabla}(\mu)\Bigr)\\
&=\begin{cases} K, &\text{if $i=0$, $\lam=\mu$, $x\in\{+,-\}$}\\
0, &\text{otherwise.}
\end{cases}
\end{aligned}
$$
With (\ref{key22}), the above calculation shows that $$
\Ext^{i}_{G}\Bigl(\widetilde{\Delta}^{x}(\lambda),\widetilde{\nabla}^{y}(\mu)\Bigr)=\begin{cases}
K, &\text{if $i=0$, $\lambda=\mu$ and $x=y$;}\\
0, &\text{otherwise,}
\end{cases}
$$
as required.
\smallskip

\noindent {\it Case 2.} $s_0(\mu)\neq\mu$, $y=0$. By \cite[Part 1,
Corollary (4.6)]{Ja}, we have $$\begin{aligned}
\Ext^{i}_{G}\Bigl(\widetilde{\Delta}^{x}(\lambda),\widetilde{\nabla}^{0}(\mu)\Bigr)
&\cong\Ext^{i}_{G}\Bigl(\widetilde{\Delta}^{x}(\lambda),\Ind_{H}^{G}{\nabla}(\mu)\Bigr)\\
&\cong\Ext^{i}_{H}\Bigl(\Res_{H}^{G}\bigl(\widetilde{\Delta}^{x}(\lambda)\bigr),{\nabla}(\mu)\Bigr)\\
&=\begin{cases} K, &\text{if $i=0$, $\lam=\mu$, $x=0$}\\
0, &\text{otherwise,}
\end{cases}
\end{aligned}
$$
as required. This completes the proof of the lemma.
\end{proof}

\begin{rem} Recall that
$\mathcal{R}:=\mathbb{Z}[1/2]$. Let $H_{\mathcal{R}}$ be the
${\mathcal{R}}$-form of the special orthogonal group scheme $SO_m$.
We define (cf. \cite[Part I, (2.6)]{Ja})
$G_{\mathcal{R}}:=H_{\mathcal{R}}\rtimes\mathbb{Z}/2\mathbb{Z}$. Let
$\lambda\in X(T)^{+}$. By the representation theory of semi-simple
algebraic groups (cf. \cite[Lemma 11.5.3]{Do}, \cite[Part II, Chapter
  B]{Ja}), we know that both the Weyl module $\Delta(\lam)$ and the
co-Weyl module $\nabla(\lambda)$ have nice $\mathcal{R}$-forms. We
denote them by $\Delta_{\mathcal{R}}(\lam),
\nabla_{\mathcal{R}}(\lambda)$ respectively. Furthermore, for any
$\mu\in X(T)^{+}$,
$\nabla_{\mathcal{R}}(\lambda)\otimes\nabla_{\mathcal{R}}(\mu)$ has
a $\nabla$-filtration, i.e., a filtration of
$H_{\mathcal{R}}$-modules such that each successive quotient is
isomorphic to some $\nabla_{\mathcal{R}}(\nu)$ for some $\nu\in
X(T)^{+}$. The same is true for the $\Delta$-filtration of
$\Delta_{\mathcal{R}}(\lambda)\otimes\Delta_{\mathcal{R}}(\mu)$. As
a consequence, we can define the $R$-forms
$\widetilde{\Delta}_{\mathcal{R}}^{x}(\lambda),
\widetilde{\nabla}_{\mathcal{R}}^{x}(\lambda)$ $(x \in \{+,-,0\})$
in a similar way, and Lemma \ref{key21} remains true if we replace
everything by their ${\mathcal{R}}$-forms.
\end{rem}

For any finite dimensional $G$-module $M$, an ascending filtration
$0=M_0\subset M_1\subset\cdots\subset M$ of $G$-submodules is called
a $\widetilde{\Delta}$-filtration (resp.,
$\widetilde{\nabla}$-filtration) if each successive quotient is
isomorphic to some $\widetilde{\Delta}^{x}(\lambda)$ (resp., some
$\widetilde{\nabla}^{x}(\lambda)$), where $\lambda\in X(T)^{+},
x\in\{+,-,0\}$. A $G$-module is called a tilting module if it has
both $\widetilde{\Delta}$-filtration and
$\widetilde{\nabla}$-filtration.

\begin{lem} \label{key23} Let $\lam,\mu\in X(T)^{+}$, $x,y\in\{+,-,0\}$. The
$G$-module $\widetilde{\Delta}^{x}(\lambda)\otimes
\widetilde{\Delta}^{y}(\mu)$ has a $\Delta$-filtration and the
$G$-module $\widetilde{\nabla}^{x}(\lambda)\otimes
\widetilde{\nabla}^{y}(\mu)$ has a $\nabla$-filtration.
In particular, the tensor product of any two tilting modules over
$G$ is again a tilting module over $G$.
\end{lem}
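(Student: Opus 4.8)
The plan is to reduce the statement about $G = O_m(\overline{K})$-modules to the corresponding statement about $H = SO_m(\overline{K})$-modules, which is known from the theory of tilting modules over connected reductive groups (cf. \cite[Chapter E]{Ja}). The key structural fact is that $H$ is a normal subgroup scheme of $G$ of index $2$, that $\Res^G_H$ and $\Ind^G_H$ are exact and mutually adjoint, and that each of the modules $\widetilde{\Delta}^x(\lambda)$ restricts to either $\Delta(\lambda)$ (when $x = \pm$) or to $\Delta(\lambda) \oplus \Delta(s_0\lambda)$ (when $x = 0$); dually for the $\widetilde{\nabla}^x$. The second sentence of the lemma (tensor products of tiltings are tilting) is then immediate from the first together with Lemma~\ref{key21}, exactly as in the connected case: if $M, N$ are tilting $G$-modules, filter each by $\widetilde{\Delta}$'s and by $\widetilde{\nabla}$'s; the first assertion of the lemma gives that $M \otimes N$ has both a $\widetilde{\Delta}$- and a $\widetilde{\nabla}$-filtration once one knows $\Ext^1_G$ between a $\widetilde{\Delta}^x(\lambda)$ and a $\widetilde{\nabla}^y(\mu)$ vanishes (so that the filtrations assemble), which is precisely Lemma~\ref{key21}.

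So the real content is the first assertion. First I would reduce to showing that $\widetilde{\nabla}^x(\lambda) \otimes \widetilde{\nabla}^y(\mu)$ has a $\widetilde{\nabla}$-filtration, the $\widetilde{\Delta}$-case following by duality (the contravariant dual interchanges $\widetilde{\Delta}^x$ and $\widetilde{\nabla}^x$ and sends $\widetilde{\nabla}$-filtrations to $\widetilde{\Delta}$-filtrations). By the standard homological criterion (\cite[Part II, (4.16)]{Ja} adapted to $G$), a finite-dimensional $G$-module $E$ has a $\widetilde{\nabla}$-filtration if and only if $\Ext^1_G(\widetilde{\Delta}^x(\nu), E) = 0$ for all $\nu \in X(T)^+$ and all $x \in \{+,-,0\}$; and in that case the multiplicity of $\widetilde{\nabla}^x(\nu)$ is $\dim \Hom_G(\widetilde{\Delta}^x(\nu), E)$. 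To verify the $\Ext^1$ vanishing for $E = \widetilde{\nabla}^x(\lambda) \otimes \widetilde{\nabla}^y(\mu)$, I would use the tensor identity together with the adjunction $\Ext^i_G(-, \Ind^G_H M) \cong \Ext^i_H(\Res^G_H(-), M)$ from \cite[Part I, Corollary (4.6)]{Ja} (applicable since $SO_m$ is exact in $O_m$, as already invoked in the proof of Lemma~\ref{key21}). Concretely, when at least one of $x, y$ is $0$ the module $\widetilde{\nabla}^x(\lambda) \otimes \widetilde{\nabla}^y(\mu)$ is a direct summand of $\Ind^G_H\bigl(\Res^G_H \widetilde{\nabla}^x(\lambda) \otimes \nabla(\mu)\bigr)$ via the tensor identity, and the restriction to $H$ of $\widetilde{\nabla}^x(\lambda) \otimes \widetilde{\nabla}^y(\mu)$ is a direct sum of modules of the form $\nabla(\nu) \otimes \nabla(\nu')$, each of which has an $H$-$\nabla$-filtration by the classical result for the connected reductive group $H$; combining these via the adjunction and the exactness of $\Ind^G_H$ gives the required $\Ext^1_G$-vanishing. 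The case $x, y \in \{+,-\}$ requires a little more care because $\widetilde{\nabla}^x(\lambda) \otimes \widetilde{\nabla}^y(\mu)$ is not obviously induced; here one argues that $\Res^G_H$ of this module is $\nabla(\lambda) \otimes \nabla(\mu)$, which has an $H$-$\nabla$-filtration, and uses \cite[Part I, Corollary (4.6)]{Ja} in the form $\Ext^1_G(\widetilde{\Delta}^z(\nu), E) \hookrightarrow \Ext^1_H(\Res^G_H \widetilde{\Delta}^z(\nu), \Res^G_H E)$ — or, cleanly, one observes $\Ind^G_H \Res^G_H E \cong E \oplus (\theta \text{-twist of } E) \cong E \oplus E$ (since $s_0\lambda = \lambda$, $s_0\mu = \mu$ force the twist to be isomorphic to $E$), so $E$ is a summand of an induced module whose restriction has a $\nabla$-filtration, reducing to the previous case.

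The step I expect to be the main obstacle is the bookkeeping in the mixed cases, specifically making sure that the $\widetilde{\nabla}$-filtration of $\Ind^G_H N$ — when $N$ is an $H$-module with a $\nabla$-filtration whose factors $\nabla(\nu)$ may or may not be $s_0$-stable — is correctly expressed in terms of the $G$-modules $\widetilde{\nabla}^+(\nu), \widetilde{\nabla}^-(\nu)$ (when $s_0\nu = \nu$) versus $\widetilde{\nabla}^0(\nu)$ (when $s_0\nu \neq \nu$, in which case $\widetilde{\nabla}^0(\nu) = \widetilde{\nabla}^0(s_0\nu)$ and the factors $\nabla(\nu), \nabla(s_0\nu)$ pair up). The point is that $\Ind^G_H$ is exact, so it carries the $\nabla$-filtration of $N$ to a filtration of $\Ind^G_H N$ with factors $\Ind^G_H \nabla(\nu)$, and each such factor is either $\widetilde{\nabla}^+(\nu) \oplus \widetilde{\nabla}^-(\nu)$ or $\widetilde{\nabla}^0(\nu)$; splitting off the summands (legitimate because $\Ext^1_G$ between distinct such modules vanishes by Lemma~\ref{key21}) yields an honest $\widetilde{\nabla}$-filtration of $\Ind^G_H N$, and hence of its direct summand $E$. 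Once this dictionary is set up, everything else is the routine homological argument, and the final "in particular" follows formally.
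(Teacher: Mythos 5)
Your proof takes a genuinely different route from the paper's. The paper argues directly over the ring $\mathcal{R}=\mathbb{Z}[1/2]$: starting from an $H$-$\Delta$-filtration of $\Res^G_H M$ for $M=\widetilde{\Delta}^x_{\mathcal{R}}(\lambda)\otimes\widetilde{\Delta}^y_{\mathcal{R}}(\mu)$, it saturates the bottom step $N\cong\Delta_{\mathcal{R}}(\nu)$ under $\theta$ to get a $G$-submodule $\widetilde{N}=N+\theta N$, and then does a rank comparison with $\Ind^{G_{\mathcal{R}}}_{H_{\mathcal{R}}}\Delta_{\mathcal{R}}(\nu)$ (using a base-change-to-$\mathbb{C}$ argument in the degenerate case to force $s_0(\nu)=\nu$ and $N=\theta N$) to identify $\widetilde{N}$ with one of $\widetilde{\Delta}^0_{\mathcal{R}}(\nu)$, $\widetilde{\Delta}^+_{\mathcal{R}}(\nu)\oplus\widetilde{\Delta}^-_{\mathcal{R}}(\nu)$, or $\widetilde{\Delta}^\pm_{\mathcal{R}}(\nu)$; induction on dimension finishes. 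Your proof instead goes homological: observe that $E=\widetilde{\nabla}^x(\lambda)\otimes\widetilde{\nabla}^y(\mu)$ is always a direct summand of $\Ind^G_H N$ with $N$ an $H$-module having a $\nabla$-filtration (using the tensor identity when $x$ or $y$ is $0$, or $E\mid\Ind^G_H\Res^G_H E$ when $x,y\in\{+,-\}$), deduce the vanishing of $\Ext^1_G(\widetilde{\Delta}^z(\nu),E)$ from the adjunction \cite[Part I, (4.6)]{Ja} and the $H$-theory, and invoke the $\Ext^1$-criterion. This is cleaner and avoids the paper's delicate case analysis of $\widetilde{N}$; what it costs you is that the criterion ``$E$ has a $\widetilde{\nabla}$-filtration iff $\Ext^1_G(\widetilde{\Delta}^z(\nu),E)=0$ for all $\nu,z$'' is stated in \cite[Part II, (4.16)]{Ja} only for connected reductive $G$, so you would have to justify (perhaps as a corollary of Lemmas \ref{key21} and 3.1 and the universal property of $\widetilde{\Delta}^z(\lambda)$) that the direction $(\Leftarrow)$ survives to the disconnected group $O_m$ -- this is believable and standard in flavor, but it is a real input that the paper's construction does not need. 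Two small repairs: the isomorphism $\Ind^G_H\Res^G_H E\cong E\oplus E$ is not correct in general; the projection formula gives $E\otimes\Ind^G_H k\cong E\oplus(E\otimes\mathrm{sgn})$, and the sign twist need not be isomorphic to $E$ -- but since all you use is that $E$ is a direct summand, the conclusion is unaffected. Also, in the ``in particular'' step, the $\Ext$-vanishing is not actually needed to splice the $\widetilde{\Delta}$-filtrations of the subquotients into one for $M\otimes N$; this assembles for free.
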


\begin{proof} Let $M:=\widetilde{\Delta}_{\mathcal{R}}^{x}(\lambda)\otimes
\widetilde{\Delta}_{\mathcal{R}}^{y}(\mu)$. By the definition of
$\widetilde{\Delta}_{\mathcal{R}}^{x}(\lambda)$ and
$\widetilde{\Delta}_{\mathcal{R}}^{y}(\mu)$, it is easy to see that
$\Res_{H}^{G}M$ has $\Delta$-filtration (as
$H_{\mathcal{R}}$-module), say, $$ 0=M_0\subset M_1\subset
M_2\subset\cdots\subset M_s=M.
$$

We set $N:=M_1$. Then $N\cong{\Delta}_{\mathcal{R}}(\nu)$ for some
$\nu\in X(T)^+$. Let $\widetilde{N}:=N+\theta N$. Then
$\widetilde{N}$ is a $G_{\mathcal{R}}$-submodule of $M$. It is clear
that there is a surjective $G_{\mathcal{R}}$-homomorphism from
$\Ind_{H_{\mathcal{R}}}^{G_{\mathcal{R}}}\bigl({\Delta}_{\mathcal{R}}(\nu)\bigr)$
onto $\widetilde{N}$. We denote it by $\rho:
\Ind_{H_{\mathcal{R}}}^{G_{\mathcal{R}}}\bigl({\Delta}_{\mathcal{R}}(\nu)\bigr)\twoheadrightarrow
\widetilde{N}$.\smallskip

Note that both
$\Ind_{H_{\mathcal{R}}}^{G_{\mathcal{R}}}\bigl({\Delta}_{\mathcal{R}}(\nu)\bigr)$
and $\widetilde{N}$ are free $\mathcal{R}$-submodules of $M$. If
${\rm
rank}_{\mathcal{R}}\Ind_{H_{\mathcal{R}}}^{G_{\mathcal{R}}}\bigl({\Delta}_{\mathcal{R}}(\nu)\bigr)={\rm
rank}_{\mathcal{R}}\widetilde{N}$, then it is readily seen that
$\rho$ is an isomorphism. That is,
$\widetilde{N}\cong\Ind_{H_{\mathcal{R}}}^{G_{\mathcal{R}}}\bigl({\Delta}_{\mathcal{R}}(\nu)\bigr)$
as $G$-module. Note that $$
\Ind_{H_{\mathcal{R}}}^{G_{\mathcal{R}}}\bigl({\Delta}_{\mathcal{R}}(\nu)\bigr)=\widetilde{\Delta}_{\mathcal{R}}^0(\nu),
$$
if $s_0(\nu)\neq\nu$; or $$
\Ind_{H_{\mathcal{R}}}^{G_{\mathcal{R}}}\bigl({\Delta}_{\mathcal{R}}(\nu)\bigr)\cong
\widetilde{\Delta}_{\mathcal{R}}^+(\nu)\oplus
\widetilde{\Delta}_{\mathcal{R}}^-(\nu),
$$ if $s_0(\nu) = \nu$. Clearly
$\Res^{G_{\mathcal{R}}}_{H_{\mathcal{R}}}\bigl(\widetilde{N}\bigr)$
has a $\Delta$-filtration. Applying \cite[Part II, Lemma B.9,
Corollary 4.17)]{Ja}, both $M$ and $M/\widetilde{N}$ have
$\widetilde{\Delta}$-filtrations, so $M/\widetilde{N}$ also has a
$\widetilde{\Delta}$-filtration. Now it follows easily by induction
on $\dim M$ that $M$ has a $\widetilde{\Delta}$-filtration as
$G$-module, as required.\smallskip

Now we assume that
\begin{equation}
\label{assu} {\rm
rank}_{\mathcal{R}}\Ind_{H_{\mathcal{R}}}^{G_{\mathcal{R}}}\bigl({\Delta}_{\mathcal{R}}(\nu)\bigr)>{\rm
rank}_{\mathcal{R}}\widetilde{N}.\end{equation} Let $\phi_1, \phi_2$
be the following two maps: $$\begin{matrix} \phi_1:& \theta
N\otimes_{\mathcal{R}}\mathbb{C}\rightarrow\widetilde{N}\otimes_{\mathcal{R}}\mathbb{C}\quad
& \phi_2:&
N\otimes_{\mathcal{R}}\mathbb{C}\rightarrow\widetilde{N}\otimes_{\mathcal{R}}\mathbb{C}\\
&\,\,\theta x\otimes_{\mathcal{R}}c\mapsto
\theta x\otimes_{\mathcal{R}}c &\qquad
&\,\,x\otimes_{\mathcal{R}}c\mapsto x\otimes_{\mathcal{R}}c,
\end{matrix}
$$
where $x\in N, c\in\mathbb{C}$. Note that
$$
\theta{N}\otimes_{\mathcal{R}}\mathbb{C}\cong{\Delta}_{\mathbb{C}}(s_0(\nu)),\quad
{N}\otimes_{\mathcal{R}}\mathbb{C}\cong{\Delta}_{\mathbb{C}}(\nu)
$$
are two simple $SO_{m}(\mathbb{C})$-modules, and $$
\Res^{G_{\mathbb{C}}}_{H_{\mathbb{C}}}\bigl(\widetilde{N}\otimes_{\mathcal{R}}\mathbb{C}\bigr)={\rm
im}(\phi_1)+{\rm im}(\phi_2).
$$
Since $\theta^2=1$, it follows easily that $\phi_1\neq 0$ if and
only if $\phi_2\neq 0$. Therefore, it follows from our assumption
(\ref{assu}) that $s_0(\nu)=\nu$ and $$
\Res^{G_{\mathbb{C}}}_{H_{\mathbb{C}}}\bigl(\widetilde{N}\otimes_{\mathcal{R}}\mathbb{C}\bigr)=
{\Delta}_{\mathbb{C}}(\nu).
$$
Therefore, we deduce that as $G_{\mathbb{C}}$-module, either $$
\widetilde{N}\otimes_{\mathcal{R}}\mathbb{C}\cong
\widetilde{\Delta}^+_{\mathbb{C}}(\nu),
$$
or $$ \widetilde{N}\otimes_{\mathcal{R}}\mathbb{C}\cong
\widetilde{\Delta}^-_{\mathbb{C}}(\nu).
$$ In particular, $\theta$ always acts as a scalar on the highest
weight vector of $\widetilde{N}$, and the scalar is either $1$ or
$-1$. This implies that $N=\theta N$, and hence
$\widetilde{N}=N+\theta N=N$, and either $\widetilde{N}\cong
\widetilde{\Delta}^+_{\mathcal{R}}(\nu)$ or $\widetilde{N}\cong
\widetilde{\Delta}^-_{\mathcal{R}}(\nu)$, as required. Now using the
same argument as before, we can prove by induction that $M$ has a
$\widetilde{\Delta}$-filtration as $G$-module. This proves the first
statement of this lemma. Since every co-Weyl module is the dual of
some Weyl module, the statement for $\widetilde{\nabla}$-filtrations
follows immediately by taking duals. This completes the proof of the
lemma.
\end{proof}

\begin{lem} \label{key2} Let $m$ be an arbitrary natural number, and
$K$ be an arbitrary infinite field of odd characteristic. Then the
dimension of
$$ \End_{O_m({K})}\Bigl(V_{{K}}^{\otimes n}\Bigr)
$$
does not depend on the choice of the infinite field $K$.
\end{lem}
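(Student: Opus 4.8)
The plan is to reduce the statement for $O_m(K)$ to the case already handled for $SO_m(\overline{K})$, using the tilting-module machinery developed in this section together with base change.

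First I would observe that $\End_{O_m(K)}(V_K^{\otimes n})$ is insensitive to passing from $K$ to its algebraic closure $\overline{K}$: the tensor space $V_K^{\otimes n}$ and all the structure maps are defined over the prime field, and $O_m$ is a flat group scheme over $\mathcal{R} = \mathbb{Z}[1/2]$, so one has $\End_{O_m(K)}(V_K^{\otimes n}) \otimes_K \overline{K} \cong \End_{O_m(\overline{K})}(V_{\overline{K}}^{\otimes n})$ by a standard flat base change argument (as used, e.g., in the proof of Lemma \ref{key1}). Hence it suffices to prove that $\dim_{\overline{K}} \End_{O_m(\overline{K})}(V_{\overline{K}}^{\otimes n})$ is independent of $\overline{K}$, i.e.\ depends only on $m$ and $n$ and not on the characteristic (as long as it is odd).

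Next I would invoke the tilting-module results just established. Since $V \cong \Delta(\varepsilon_1) = \nabla(\varepsilon_1) = L(\varepsilon_1)$ is a tilting module over $SO_m(\overline{K})$ — and it carries the natural action of $O_m(\overline{K})$, so it is a tilting module over $G = O_m(\overline{K})$ in the sense defined above Lemma \ref{key23} — Lemma \ref{key23} shows that $V_{\overline{K}}^{\otimes n}$ is again a tilting module over $G$. For tilting modules with a $\widetilde\Delta$- and $\widetilde\nabla$-filtration, the $\Ext$-orthogonality of Lemma \ref{key21} gives the usual formula: writing $(T : \widetilde\Delta^x(\lambda))$ for the multiplicity of $\widetilde\Delta^x(\lambda)$ in a $\widetilde\Delta$-filtration of a tilting module $T$ (well-defined by Lemma \ref{key21}), one has
$$
\dim \End_G(V_{\overline{K}}^{\otimes n}) = \sum_{\lambda, x} \bigl(V_{\overline{K}}^{\otimes n} : \widetilde\Delta^x(\lambda)\bigr) \cdot \bigl(V_{\overline{K}}^{\otimes n} : \widetilde\nabla^x(\lambda)\bigr).
$$
So I must show each multiplicity $(V_{\overline{K}}^{\otimes n} : \widetilde\Delta^x(\lambda))$ is characteristic-independent. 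Using the $\mathcal{R}$-forms of the Remark, the whole $\widetilde\Delta$-filtration of $V_{\mathcal{R}}^{\otimes n}$ — built by the inductive procedure in the proof of Lemma \ref{key23} from the $\Delta$-filtration of $V_{\mathcal{R}}^{\otimes n}$ over $SO_m$, which is itself characteristic-free — descends to any $\overline{K}$, so the multiplicities equal those computed over $\mathbb{C}$; combined with the $\widetilde\nabla$-side obtained by duality, this yields the asserted independence.

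The main obstacle I anticipate is bookkeeping with the three flavours of Weyl/co-Weyl modules $\widetilde\Delta^+, \widetilde\Delta^-, \widetilde\Delta^0$: one must check that the multiplicity formula for $\dim\End_G$ genuinely has the diagonal form above (i.e.\ that $\widetilde\Delta^+(\lambda)$ pairs only with $\widetilde\nabla^+(\lambda)$, etc.), which is exactly the content of Lemma \ref{key21}, and that the filtration over $\mathcal{R}$ specializes compatibly at every prime — this requires knowing that $\widetilde N \otimes_{\mathcal{R}} \overline{K}$ stays indecomposable of the expected type, which follows from the argument in Lemma \ref{key23} once one knows it holds over $\mathbb{C}$. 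The $m$ odd case is easier: there $\theta = -\id$ acts as $(-1)^n$ on $V^{\otimes n}$, so $\End_{O_m}(V^{\otimes n}) = \End_{SO_m}(V^{\otimes n})$ directly, and one appeals to the known $SO_m$ result quoted at the start of the section. I would dispatch that case first and then concentrate on $m = 2l$.
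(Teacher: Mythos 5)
Your proposal follows essentially the same route as the paper: reduce to $K = \overline{K}$, dispatch the $m$ odd case via $\theta = -\id$ acting by the scalar $(-1)^n$ (so that $\End_{O_m} = \End_{SO_m}$ and the result is already known from the beginning of the section), and for $m = 2l$ combine Lemma \ref{key23} (so that $V^{\otimes n}$ is a $G$-tilting module) with the $\Ext$-orthogonality of Lemma \ref{key21} and the $\mathcal{R}$-forms of the Remark to see the filtration multiplicities, hence $\dim\End_G$, are characteristic-free. The only difference is one of exposition — you make explicit the multiplicity formula and the specialization argument that the paper compresses into ``it follows by induction through the filtrations.''
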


\begin{proof} First, we note that $$
\End_{O_m({K})}\Bigl(V_{{K}}^{\otimes n}\Bigr)\otimes
_{K}\overline{K}=\End_{O_m(\overline{K})}\Bigl(V_{\overline{K}}^{\otimes
n}\Bigr).
$$
Therefore, to prove the lemma, we can assume without less of
generality that $K=\overline{K}$.

If $m$ is odd, then $\theta$ acts as $-\id$ on the tensor space
$V^{\otimes n}$. In that case, it is clear that $$
\dim\End_{O_m({K})}\Bigl(V_{{K}}^{\otimes
n}\Bigr)=\dim\End_{SO_m({K})}\Bigl(V_{{K}}^{\otimes n}\Bigr).
$$
It follows that (cf. the discussion at the beginning of this
section) the dimension of
$$ \End_{O_m({K})}\Bigl(V_{{K}}^{\otimes n}\Bigr)
$$
does not depend on the choice of the field $K$ in this
case.\smallskip

Now we assume that $m$ is even. We apply Lemmas \ref{key21} and \ref{key23}. It follows by induction through the filtrations that the dimension of
$$ \End_{O_m({K})}\Bigl(V_{{K}}^{\otimes n}\Bigr)
$$
again does not depend on the choice of the field $K$ in this case.
This completes the proof of the lemma.
\end{proof}

\bigskip\bigskip

\section{Proof of part b) in Theorem \ref{mainthm1} in the case $m\geq n$}

The purpose of this section is to give a proof of part b) in Theorem
\ref{mainthm1} in the case where $m\geq n$. Throughout this section,
we assume that $m\geq n$.\smallskip

By Lemma \ref{B1} and Lemma \ref{key2}, we know that $$
\dim\End_{O_m({K})}\Bigl(V_{{K}}^{\otimes n}\Bigr)=\dim\bb_n(m).
$$
Therefore, in order to prove part b) in Theorem \ref{mainthm1} in
the case $m\geq n$, it suffices to show that $\varphi$ is injective
in that case. Without loss of generality, we can assume that
$K=\overline{K}$ is algebraically closed.

Our strategy to prove the injectivity of $\varphi$ is similar to
that used in \cite[Section 3]{DDH}. First, we make some conventions
on the left and right place permutation actions. Throughout the rest
of this paper, for any $\sigma,\tau\in\BS_n, a\in\{1,2,\cdots,n\}$,
we set
$$ (a)(\sigma\tau)=\bigl((a)\sigma\bigr)\tau,\quad
(\sigma\tau)(a)=\sigma\bigl(\tau(a)\bigr).
$$
In particular, we have $\sigma(a)=(a)\sigma^{-1}$. Therefore, for
any $\bi=(i_1,i_2,\cdots,i_n)\in I(m,n), w\in\BS_n$, we have
$$
\bi w=(i_1,i_2,\cdots,i_n)w=(i_{w(1)},i_{w(2)},\cdots,i_{w(n)}), $$
which gives the so-called right place permutation action:
$$
v_{\bi}w=(v_{i_1}\otimes\cdots\otimes v_{i_n})w=
v_{i_{w(1)}}\otimes\cdots\otimes v_{i_{w(n)}}=v_{\bi w}.
$$

We make a further reduction. Let $\widehat{V}$ be the same $K$-vector
space as $V$, endowed with a different non-degenerate
symmetric bilinear form $(\,,)_1$ as follows: $$ (v_i,
v_j)_1:=\delta_{i,j},\,\,\forall\,1\leq i,j\leq m.
$$
Then the orthogonal group relative to $(\,,)_1$ is defined to be
$$ {O}(\widehat{V}):=\Bigl\{g\in GL(V)\Bigm|(gv,
gw)_1=(v,w)_1,\,\,\forall\,\,v,w\in \widehat{V}\Bigr\}. $$ We fix an
element $c_0\in K$ such that $c_0^2=-1$. Then it is easy to see that
the following map $$ \phi:\, v_i\mapsto\begin{cases}
(v_i+v_{i'})/\sqrt{2},&\text{if $1\leq i\leq m/2$;}\\
c_0(v_{i}-v_{i'})/\sqrt{2},&\text{if $(m+1)/2<i\leq m$;}\\
v_i, &\text{if $i=(m+1)/2$.}
\end{cases}\,\,\forall\,1\leq i\leq m,
$$
extends to an isomorphism from the orthogonal space $\widehat{V}$
onto the orthogonal space $V$. We extend $\phi$ diagonally to an
isomorphism (still denoted by $\phi$) from $\widehat{V}^{\otimes n}$
onto $V^{\otimes n}$. Let $x\in O(\widehat{V}),
f\in\End_{K}\bigl(\widehat{V}^{\otimes n}\bigr)$. It is easy to see
that $x\in {O}(\widehat{V})$ if and only if $\phi x\phi^{-1}\in
O(V)$, and $f\in\End_{{O}(\widehat{V})}\bigl(\widehat{V}^{\otimes
n}\bigr)$ if and only if $\phi
f\phi^{-1}\in\End_{O(V)}\bigl(V^{\otimes n}\bigr)$. In other words,
the map $\widetilde{\phi}: f\mapsto \phi f\phi^{-1}$ defines an
isomorphism from the endomorphism algebra
$\End_{{O}(\widehat{V})}\bigl(\widehat{V}^{\otimes n}\bigr)$ onto
the endomorphism algebra $\End_{O(V)}\bigl(V^{\otimes n}\bigr)$.
Recall that we have a natural map $\varphi$ from $\bb_n(m)$ to
$\End_{O(V)}\bigl(V^{\otimes n}\bigr)$. Using the isomorphism
$\widetilde{\phi}$, we get a $K$-algebra homomorphism
$\widehat{\varphi}$ from $\bb_n(m)$ to
$\End_{{O}(\widehat{V})}\bigl(\widehat{V}^{\otimes n}\bigr)$ as follows: $$
X\mapsto \phi^{-1} \varphi(X) \phi,\,\,\forall\,X\in\bb_n(m).
$$ By
direct calculation, one can verify that for any
$\bi=(i_1,\cdots,i_n)\in I(m,n)$, $j\in\{1,2,\cdots,n-1\}$,
$$\begin{aligned} (v_{i_1}\otimes\cdots\otimes
v_{i_n})\widehat{\varphi}(s_j)&:=v_{i_1}\otimes\cdots\otimes
v_{i_{j-1}}\otimes v_{i_{j+1}}\otimes v_{i_{j}}\otimes v_{i_{j+2}}
\otimes\cdots\otimes v_{i_n},\\
(v_{i_1}\otimes\cdots\otimes
v_{i_n})\widehat{\varphi}(e_j)&:=\delta_{i_{j},i_{j+1}}
v_{i_1}\otimes\cdots\otimes v_{i_{j-1}}\otimes\biggl(
\sum_{k=1}^{m}v_{k}\otimes v_{k}\biggr)\otimes v_{i_{j+2}}\\
& \qquad\qquad\otimes\cdots\otimes v_{i_n}.\end{aligned}
$$
To prove $\varphi$ is injective, it suffices to prove that
$\widehat{\varphi}$ is injective. This will be done in the rest of
this section.\smallskip

In \cite{GL}, the Brauer algebra was shown to be cellular. Enyang
gave in \cite{E} an explicit combinatorial cellular basis for Brauer
algebra.  Enyang's basis is in some sense similar to the Murphy basis
for type $A$ Hecke algebra. It is indexed by certain bitableaux. In
the remaining part of this section we shall use Enyang's results from
\cite{E}. We shall only use his basis for the specialized Brauer
algebra $\bb_n(m)$. We first recall some notations and
notions.\smallskip

Let $n$ be a natural number. A bipartition of $n$ is a pair
$(\lam^{(1)},\lam^{(2)})$ of partitions of numbers $n_1$ and $n_2$
with $n_1+n_2=n$. The notions of Young diagram, bitableaux, etc.,
carry over easily. For example, if $\lam:=(\lam^{(1)},\lam^{(2)})$
is a bipartition of $n$, then a $\lam$-bitableau $\ft$ is defined to
be a bijective map from the Young diagram $[\lam]$ to the set
$\{1,2,\cdots,n\}$. Thus $\ft$ is a pair $(\ft^{(1)},\ft^{(2)})$ of
tableaux, where $\ft^{(1)}$ is a $\lam^{(1)}$-tableau and
$\ft^{(2)}$ is a $\lam^{(2)}$-tableau. A bitableau
$\ft=(\ft^{(1)},\ft^{(2)})$ is called row standard if the numbers
increase along rows in both $\ft^{(1)}$ and $\ft^{(2)}$. For each
integer $f$ with $0\leq f\leq [n/2]$, we set $\nu=\nu_{f}:=((2^f),
(n-2f))$, where $(2^f):=(\underbrace{2,2,\cdots,2}_{\text{$f$
copies}})$ and $(n-2f)$ are considered as partitions of $2f$ and
$n-2f$ respectively. So $\nu$ is a bipartition of $n$. Let
$\ft^{\nu}$ be the standard $\nu$-bitableau in which the numbers
$1,2,\cdots,n$ appear in order along successive rows of the first
component tableau, and then in order along successive rows of the
second component tableau. We define
$$
\mathfrak{D}_{f}:=\Biggl\{d\in\BS_n\Biggm|\begin{matrix}\text{$(\ft^{(1)},\ft^{(2)})=\ft^{\nu}d$
is row standard and the first }\\
\text{column of $\ft^{(1)}$ is an increasing sequence}\\
\text{when read from top to bottom}\\
\end{matrix}\Biggr\}.
$$
For each partition $\lambda$ of $n-2f$, we denote by $\Std(\lambda)$
the set of all the standard $\lambda$-tableaux with entries in
$\{2f+1,\cdots,n\}$. The initial tableau $\ft^{\lam}$ in this case
has the numbers $2f+1,\cdots,n$ in order along successive rows.

\begin{lem} \text{\rm (\cite{E})} \label{Enyangbasis}
For each $\lam\vdash n-2f$, $\fs,
\ft\in\Std(\lam)$, let $m_{\fs,\ft}$ be the corresponding Murphy
basis element (cf. \cite{Mu}) of the symmetric group algebra
$K\BS_{\{2f+1,\cdots,n\}}$. Then the set
$$ \biggl\{{d_1}^{\ast}e_1e_3\cdots
e_{2f-1}m_{\fs\ft}{d_2}\biggm|\begin{matrix}
\text{$0\leq f\leq [n/2]$, $\lam\vdash n-2f$, $\fs, \ft\in\Std(\lam)$,}\\
\text{$d_1, d_2\in\mathfrak{D}_{f}$}
\end{matrix}\biggr\}$$
is a cellular basis of the Brauer algebra $\bb_n(m)_{\mathbb{Z}}$.
\end{lem}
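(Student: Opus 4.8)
The plan is to follow the combinatorial strategy of Enyang \cite{E}, which reduces the cellular structure of $\bb_n(m)$ to that of the symmetric group via the Murphy basis. Since $\bb_n(m)_{\Z}=\Z\otimes_{\Z[x]}\bb_n(x)$ is free over $\Z$ of rank $(2n-1)(2n-3)\cdots 3\cdot 1$, it suffices to: (i) show the displayed set spans $\bb_n(m)_{\Z}$ over $\Z$; (ii) count it and match the cardinality with this rank, so that it is a $\Z$-basis; and (iii) verify the Graham--Lehrer axioms \cite{GL} with respect to the anti-involution $\ast$ of $\bb_n(m)$ fixing every $s_i$ and $e_i$ (reflection of Brauer diagrams in a horizontal axis), the poset being the pairs $(f,\lam)$ with $0\le f\le[n/2]$, $\lam\vdash n-2f$, ordered by the chain of two-sided ideals $\bb_n^{(\ge k)}$ (spanned by diagrams with at least $k$ horizontal edges) and, within each layer $f$, by dominance, as in the Murphy-basis setup \cite{Mu}.

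For (i), I would work in the diagram model \cite{GW}. A Brauer $n$-diagram $D$ with exactly $f$ horizontal edges in its top row --- hence $f$ in the bottom row and $n-2f$ through-strands --- factors as $D=d_1^{\ast}(e_1e_3\cdots e_{2f-1})\sigma d_2$, where $\sigma\in\BS_{\{2f+1,\dots,n\}}$ permutes the through-strands and $d_1,d_2\in\mathfrak{D}_f$ record how the $f$ top, respectively bottom, arcs of $D$ are placed inside $\{1,\dots,n\}$, the condition on the first column of $\ft^{(1)}$ picking the shortest such representative; in fact $\mathfrak{D}_f$ is a transversal for $(\BS_2\wr\BS_f)\times\BS_{n-2f}$ in $\BS_n$. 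As $e_1e_3\cdots e_{2f-1}$ commutes with $K\BS_{\{2f+1,\dots,n\}}$, expanding $\sigma$ in the Murphy basis $\{m_{\fs\ft}\}$ of that algebra writes $D$ as a $\Z$-combination of the displayed elements, modulo diagrams with more horizontal edges; since the latter span a two-sided ideal, a downward induction on $f$ completes the spanning claim. For (ii), $|\mathfrak{D}_f|=n!\big/\bigl(2^f f!\,(n-2f)!\bigr)$ (ways to split $\{1,\dots,n\}$ into $f$ unordered pairs and an $(n-2f)$-set), while $\sum_{\lam\vdash n-2f}|\Std(\lam)|^2=(n-2f)!$ by Robinson--Schensted; thus the displayed set has cardinality
\[
  \sum_{f=0}^{[n/2]}\Bigl(\frac{n!}{2^f f!\,(n-2f)!}\Bigr)^{2}(n-2f)!,
\]
which counts perfect matchings of $2n$ points (choose $f$ arc-pairs among the $n$ top vertices and likewise among the bottom, then match the $n-2f$ through-strands), i.e. $(2n-1)(2n-3)\cdots 3\cdot 1$. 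With (i), this forces a $\Z$-basis.

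For (iii), compatibility with $\ast$ is immediate: the $e_{2i-1}$ commute, are $\ast$-fixed, and commute with $K\BS_{\{2f+1,\dots,n\}}$, and $m_{\fs\ft}^{\ast}=m_{\ft\fs}$, so
\[
  \bigl(d_1^{\ast}e_1e_3\cdots e_{2f-1}m_{\fs\ft}d_2\bigr)^{\ast}=d_2^{\ast}e_1e_3\cdots e_{2f-1}m_{\ft\fs}d_1,
\]
again of the displayed form with $(f,\lam)$ fixed and the two sides interchanged. The main obstacle is the multiplication axiom: for each generator $g\in\{s_i,e_i\}$,
\[
  g\cdot d_1^{\ast}e_1\cdots e_{2f-1}m_{\fs\ft}d_2\equiv\sum_{d_1',\fs'}r^{g}_{(d_1',\fs'),(d_1,\fs)}\,(d_1')^{\ast}e_1\cdots e_{2f-1}m_{\fs'\ft}d_2
\]
modulo diagrams with more than $f$ horizontal edges together with terms of strictly more dominant shape, with scalars $r^{g}$ independent of $(\ft,d_2)$. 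I expect this to follow from a case analysis: left multiplication of $d_1^{\ast}e_1\cdots e_{2f-1}$ by $g$ either creates an extra horizontal edge (landing in the ideal of diagrams with more horizontal edges) or can be commuted past the arc part, whereupon the claim reduces to the known straightening of the Murphy basis (Garnir relations, \cite{Mu}). The delicate point --- the bookkeeping of how $s_i,e_i$ interact with the transversal $\mathfrak{D}_f$, and the check that no term with fewer horizontal edges ever arises --- is precisely what is carried out in \cite{E}; granting it, the Graham--Lehrer axioms hold and $\bb_n(m)_{\Z}$ is cellular with the displayed basis.
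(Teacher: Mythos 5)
The paper does not prove this lemma; it simply cites Enyang \cite{E}, so there is no in-paper argument to compare against. Your outline is nonetheless sound on the parts it actually carries out. The factorization $D=d_1^{\ast}(e_1e_3\cdots e_{2f-1})\sigma d_2$ for a diagram with exactly $f$ top arcs, with $d_1,d_2\in\mathfrak{D}_f$ and $\sigma\in\BS_{\{2f+1,\dots,n\}}$, is exactly what the paper records (without proof) as Corollary~\ref{Enyangbasis2}; passing from $\sigma$ to $m_{\fs\ft}$ is the unimodular Murphy change of basis inside $\Z\BS_{\{2f+1,\dots,n\}}$, and in fact you do not even need the ``modulo diagrams with more horizontal edges'' hedge --- that substitution is exact. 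Your cardinality count, $\sum_f\bigl(n!/(2^f f!(n-2f)!)\bigr)^2(n-2f)!=(2n-1)!!$, together with the fact that a spanning set of a free $\Z$-module whose size equals the rank is a basis, correctly pins down $\Z$-freeness. The $\ast$-compatibility check is also fine (note you silently commuted $m_{\ft\fs}$ past $e_1\cdots e_{2f-1}$, which is legitimate since they act on disjoint positions).

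The one place where you stop short of a proof is exactly the place where the paper also stops short: the multiplication axiom, i.e.\ that left multiplication by a generator acts with scalars $r^g_{(d_1',\fs'),(d_1,\fs)}$ independent of $(\ft,d_2)$ modulo the ideal of diagrams with more arcs plus the more-dominant cell span. You describe the case analysis correctly (an extra arc lands in the higher ideal; otherwise the question pushes through to the symmetric-group Murphy straightening) and explicitly defer the bookkeeping to \cite{E}. Since the lemma is stated in the paper as a citation to \cite{E}, your treatment is at the same level of rigor as the paper's, and your reduction of the remaining work to the straightening rules in \cite{E} is the standard route.
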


As a consequence, by combining Lemma \ref{Enyangbasis} and
\cite[(3.3)]{E}, we have

\begin{cor} \label{Enyangbasis2} With the above notations, the set
$$ \biggl\{{d_1}^{\ast}e_1e_3\cdots e_{2f-1}{\sigma}
{d_2}\biggm|\begin{matrix}\text{$0\leq f\leq [n/2]$,
$\sigma\in\BS_{\{2f+1,\cdots,n\}}$,}\\
\text{$d_1, d_2\in\mathfrak{D}_{f}$}
\end{matrix}\biggr\}$$
is a basis of the Brauer algebra $\bb_n(m)_{\mathbb{Z}}$.
\end{cor}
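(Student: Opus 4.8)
The plan is to derive this as an immediate change-of-basis consequence of Lemma~\ref{Enyangbasis} together with the classical theorem of Murphy~\cite{Mu} that the standard basis $\{m_{\fs\ft}\}$ is a $\Z$-basis of the symmetric group algebra. Fix $f$ with $0\le f\le[n/2]$ and abbreviate $\BS^{(f)}:=\BS_{\{2f+1,\cdots,n\}}$. By Murphy's theorem the set $\{m_{\fs\ft}\mid\lam\vdash n-2f,\ \fs,\ft\in\Std(\lam)\}$ is a $\Z$-basis of $\Z\BS^{(f)}$; since each $m_{\fs\ft}$, expanded in the group-element basis $\{\sigma\mid\sigma\in\BS^{(f)}\}$, has integer coefficients, the two bases of $\Z\BS^{(f)}$ are related by a matrix $P_f\in\GL_{(n-2f)!}(\Z)$, which depends only on $f$ (not on $d_1,d_2$, since $m_{\fs\ft}$ is an element of $\Z\BS^{(f)}$ intrinsically attached to $\fs,\ft$).

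The first step is to record the cardinality bookkeeping: for each $f$, $\sum_{\lam\vdash n-2f}|\Std(\lam)|^2=(n-2f)!=|\BS^{(f)}|$, so that $P_f$ is a square matrix and the two displayed sets — Enyang's cellular basis of Lemma~\ref{Enyangbasis} and the set in the statement — are indexed by sets of the same size, namely $\sum_{f}|\mathfrak{D}_f|^2(n-2f)!=(2n-1)(2n-3)\cdots 3\cdot 1=\dim\bb_n(m)$. The second step is to assemble these local changes of basis: consider the $\Z$-linear map
\[
 \Theta\colon\ \bigoplus_{f=0}^{[n/2]}\ \bigoplus_{d_1,d_2\in\mathfrak{D}_f}\Z\BS^{(f)}\ \longrightarrow\ \bb_n(m)_{\Z},\qquad (x)_{f,d_1,d_2}\longmapsto d_1^{\ast}e_1e_3\cdots e_{2f-1}\,x\,d_2,
\]
where \cite[(3.3)]{E} is what identifies the left multiplier $e_1e_3\cdots e_{2f-1}$ so that the images of the Murphy basis vectors match the shape of Enyang's basis. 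By Lemma~\ref{Enyangbasis}, $\Theta$ sends the $\Z$-basis of the source obtained by taking the Murphy basis in each summand $\Z\BS^{(f)}$ onto a $\Z$-basis of $\bb_n(m)_{\Z}$; hence $\Theta$ is an isomorphism of $\Z$-modules. Consequently $\Theta$ also sends the $\Z$-basis of the source obtained by taking the group-element basis $\{\sigma\}$ in each summand onto a $\Z$-basis of $\bb_n(m)_{\Z}$ — and this image is precisely the set in the statement. Equivalently, the transition matrix between the two displayed sets is block-diagonal with one block $P_f$ for each triple $(f,d_1,d_2)$, hence lies in $\GL(\Z)$.

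I do not expect any genuine obstacle here: granting Lemma~\ref{Enyangbasis} and Murphy's basis theorem, the proof is a layer-by-layer change of basis inside the symmetric-group subalgebras $\Z\BS^{(f)}$. The only points that need a little care are (i) the identity $\sum_{\lam\vdash r}|\Std(\lam)|^2=r!$, ensuring the relevant matrices are square, and (ii) checking that $P_f$ is genuinely independent of $d_1$ and $d_2$, which is immediate since $m_{\fs\ft}\in\Z\BS^{(f)}$ does not involve $d_1,d_2$. Alternatively, one can bypass $\Theta$ entirely: expanding each $m_{\fs\ft}$ in group elements shows that Enyang's basis lies in the $\Z$-span of the set in the statement, which therefore spans $\bb_n(m)_{\Z}$; having exactly $\dim\bb_n(m)$ elements, a spanning set of a free $\Z$-module of that rank is automatically a $\Z$-basis.
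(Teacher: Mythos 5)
Your proof is correct, and it is essentially the argument the paper intends: the paper gives Corollary \ref{Enyangbasis2} in one line, citing Lemma \ref{Enyangbasis} together with \cite[(3.3)]{E}, and your $\Theta$-map / block-diagonal transition-matrix argument is precisely the detailed reconstruction of that change of basis from the Murphy basis $\{m_{\fs\ft}\}$ of $\Z\BS_{\{2f+1,\dots,n\}}$ to the group-element basis $\{\sigma\}$. Both your checks are right where they should be: $\sum_{\lam\vdash r}|\Std(\lam)|^2=r!$ makes each $P_f$ square, and $P_f$ is visibly independent of $(d_1,d_2)$ since $m_{\fs\ft}$ lives in $\Z\BS_{\{2f+1,\dots,n\}}$ before multiplication by $d_1^\ast e_1e_3\cdots e_{2f-1}$ and $d_2$; and your fallback argument (spanning set of size $=\operatorname{rank}$ in a free $\Z$-module is a basis) is a valid shortcut as well.
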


{\it From now on and until the end of this section, we shall regard
the tensor space $V^{\otimes n}$ as a module over $\bb_n(m)$ via
$\widehat{\varphi}$ (instead of $\varphi$).} To prove the
injectivity of $\widehat{\varphi}$, it suffices to show that the
annihilator $\ann_{\bb_n(m)}(V^{\otimes n})$ is $(0)$. Note that
$$ \ann_{\bb_n(m)}(V^{\otimes n})=\bigcap_{v\in V^{\otimes
n}}\ann_{\bb_n(m)}(v).
$$
Thus it is enough to calculate $\ann_{\bb_n(m)}(v)$ for some set of
chosen vectors $v\in V^{\otimes n}$ such that the intersection of
annihilators is $(0)$. We write $$
\ann(v)=\ann_{\bb_n(m)}(v):=\bigl\{x\in \bb_n(m) \bigm|vx=0\bigr\}.
$$

For each integer $f$ with $0\leq f\leq [n/2]$, we denote by
$B^{(f)}$ the two-sided ideal of $\bb_n(m)_{\mathbb{Z}}$ generated
by $e_1e_3\cdots e_{2f-1}$. Note that $B^{(f)}$ is spanned by all
the Brauer diagrams which contain at least $2f$ horizontal edges
($f$ edges in each of the top and the bottom rows in the diagrams).

For $\bi\in I(m,n)$, an ordered pair $(s,t)$ ($1\leq s<t\leq n$) is
called an {\it orthogonal pair} in $\bi$ if $i_s=i_t$. Two ordered
pairs $(s,t)$ and $(u,v)$ are called disjoint if
$\bigl\{s,t\bigr\}\cap\bigl\{u,v\bigr\}=\emptyset$. We define the
{\it orthogonal length} $\ell_o(v_{\bi})=\ell_o(\bi)$ to be the
maximal number of disjoint orthogonal pairs $(s,t)$ in $\bi$. Note
that if $f>\ell_o(v_{\bi})$, then clearly
$B^{(f)}\subseteq\ann(v_{\bi})$.

\begin{lem} \label{step1} $\ann_{\bb_n(m)}\bigl(V^{\otimes
n}\bigr)\subseteq B^{(1)}$.
\end{lem}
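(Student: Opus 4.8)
The plan is to prove the sharper statement $\ann_{\bb_n(m)}(v_{\bi})\subseteq B^{(1)}$ for one suitably chosen $\bi\in I(m,n)$; since $\ann_{\bb_n(m)}(V^{\otimes n})=\bigcap_{v\in V^{\otimes n}}\ann_{\bb_n(m)}(v)$ this is more than enough. Using the standing hypothesis $m\geq n$, I take $\bi=(i_1,\dots,i_n)$ with $i_1,\dots,i_n$ pairwise distinct. The structural fact I would use is the $K$-module decomposition
\[
\bb_n(m)=K\BS_n\oplus B^{(1)},
\]
where $K\BS_n$ denotes the span inside $\bb_n(m)$ of the elements $\sigma$, $\sigma\in\BS_n$. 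This is read off from Corollary \ref{Enyangbasis2}: the basis elements with $f=0$ are precisely the $\sigma\in\BS_n$ (for $f=0$ one has $\nu_0=((\,),(n))$, so $\mathfrak{D}_0=\{1\}$ and $\BS_{\{1,\dots,n\}}=\BS_n$); each basis element with $f\geq 1$ is divisible by $e_1$ and hence lies in $B^{(1)}$; and comparing cardinalities with $\dim B^{(1)}=(2n-1)!!-n!$ (the number of Brauer $n$-diagrams having at least one horizontal edge in each row) shows that these latter elements actually form a basis of $B^{(1)}$. Equivalently one may simply invoke the familiar isomorphism $\bb_n(m)/B^{(1)}\cong K\BS_n$.

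Granting this, let $x\in\ann_{\bb_n(m)}(v_{\bi})$ and write $x=y+z$ with $y=\sum_{\sigma\in\BS_n}a_\sigma\sigma\in K\BS_n$ and $z\in B^{(1)}$. Since $\bi$ has pairwise distinct entries, $\ell_o(v_{\bi})=0$, so by the remark just before the statement $B^{(1)}\subseteq\ann_{\bb_n(m)}(v_{\bi})$, and in particular $v_{\bi}z=0$. Hence, using $v_{\bi}\sigma=v_{\bi\sigma}$ for the right place permutation action,
\[
0=v_{\bi}x=v_{\bi}y=\sum_{\sigma\in\BS_n}a_\sigma v_{\bi\sigma}=\sum_{\sigma\in\BS_n}a_\sigma\bigl(v_{i_{\sigma(1)}}\otimes\cdots\otimes v_{i_{\sigma(n)}}\bigr).
\]
Because the $i_j$ are distinct, the tensors $v_{\bi\sigma}$ for $\sigma\in\BS_n$ are pairwise different vectors of the standard monomial basis of $V^{\otimes n}$, hence linearly independent, forcing all $a_\sigma=0$. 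Thus $y=0$, so $x=z\in B^{(1)}$, proving $\ann_{\bb_n(m)}(v_{\bi})\subseteq B^{(1)}$ and therefore the lemma.

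The argument is short, and the only step requiring genuine care is the decomposition $\bb_n(m)=K\BS_n\oplus B^{(1)}$: one must check not merely that the $f\geq 1$ part of Enyang's basis lands in $B^{(1)}$ but that it genuinely spans it, and that the $f=0$ part is exactly the embedded copy of $\BS_n$. Everything else is routine — the choice of $\bi$ uses only $m\geq n$; the vanishing $v_{\bi}z=0$ is the observation preceding the statement (and ultimately reflects that under $\widehat{\varphi}$ one has $v_{\bi}e_j=\delta_{i_j,i_{j+1}}(\cdots)$, so a $\bi$ with distinct entries is killed by every $e_j$, hence by all of $B^{(1)}$); and the linear independence of the $v_{\bi\sigma}$ is immediate.
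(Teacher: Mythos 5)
Your proof is correct and is essentially the paper's own argument: both decompose $x=y+z$ with $y\in K\BS_n$ and $z\in B^{(1)}$ (justified via Corollary \ref{Enyangbasis2}), observe that a tensor $v_{\bi}$ with pairwise distinct indices (available since $m\geq n$) satisfies $\ell_o(v_{\bi})=0$ so $B^{(1)}\subseteq\ann(v_{\bi})$, and then deduce $y=0$ from the linear independence of $\{v_{\bi\sigma}\}_{\sigma\in\BS_n}$. The only cosmetic difference is that the paper fixes $\bi=(1,2,\dots,n)$ while you allow any injective $\bi$.
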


\begin{proof}  Let $x\in\ann_{\bb_n(m)}\bigl(V^{\otimes n}\bigr)$. Then we can write $x = y+z$ where $y \in K\mathfrak{S}_n$, $z\in B^{(1)}$, because the set of diagrams with at least one horizontal edge spans $B^{(1)}$ (or else see Corollary \ref{Enyangbasis2}).

Since $m\geq n$, the tensor $v:=v_1\otimes v_2\otimes\cdots\otimes
v_n$ is well-defined and $\ell_o(v)=0$. It follows that
$B^{(1)}\subseteq\ann(v)$. In particular, $vz=0$. Therefore $vx=0$
implies that $vy=0$.

On the other hand, since $v_1,\cdots,v_n$ are pairwise distinct and
$y\in K\BS_n$, it is clear that $vy=0$ implies that $y=0$.
Therefore, we conclude that $x=z\in B^{(1)}$, as required.
\end{proof}

Suppose that we have already shown $\ann_{\bb_n(m)}\bigl(V^{\otimes
n}\bigr)\subseteq B^{(f)}$ for some natural number $1\leq f\leq
[n/2]$. We want to show that $\ann_{\bb_n(m)}\bigl(V^{\otimes
n}\bigr)\subseteq B^{(f+1)}$. Let $$ \bc:=(1,1,2,2,\cdots,f,f).
$$
We define
$$ I_f:=\Bigl\{\bk=(b_{1},\cdots,b_{n-2f})\Bigm|\text{$2f+1\leq
b_{1}<\cdots<b_{n-2f}\leq m$}\Bigr\}. $$ It is clear that
$\ell_o(v_{\bc}\otimes v_{\bk})=f$ for all $\bk\in I_f$.
\smallskip

Following \cite{DDH}, we consider the subgroup $\Pi$ of
$\BS_{\{1,\cdots,2f\}}\leq\BS_n$ permuting the rows of
$\ft^{\nu^{(1)}}$ but keeping the entries in the rows fixed. $\Pi$
normalizes the stabilizer $\BS_{(2^f)}$ of $\ft^{\nu^{(1)}}$ in
$\BS_{2f}$. We set $\Psi:=\BS_{(2^f)}\rtimes\Pi$. By \cite[Lemma
3.7]{DDH}, we have
$$\BS_{2f}=\bigsqcup_{d\in\mathcal{D}_f}\Psi d,$$ where $\mathcal{D}_f:=\mathfrak{D}_{f}\bigcap\BS_{2f}$, and
``$\,\sqcup$" means a disjoint union. Let
$\mathcal{P}_f:=\{(i_1,\cdots,i_{2f})|1\leq i_1<\cdots<i_{2f}\leq
n\}$. For each $J\in\mathcal{P}_f$, we use $d_J$ to denote the
unique element in $\mathfrak{D}_{f}$ such that the first component
of $\ft^{\nu}d_J$ is the tableau obtained by inserting the integers
in $J$ in increasing order along successive rows in
$\ft^{\nu^{(1)}}$. Let $\widetilde{\mathcal{D}}_{(2f,n-2f)}$ be the
set of distinguished right coset representatives of
$\BS_{(2f,n-2f)}$ in $\BS_n$. Clearly
$d_{J}\in\widetilde{\mathcal{D}}_{(2f,n-2f)}$, and every element of
$\widetilde{\mathcal{D}}_{(2f,n-2f)}$ is of the form $d_J$ for some
$J\in\mathcal{P}_f$. By \cite[Lemma 3.8]{DDH},
$\mathfrak{D}_{f}=\bigsqcup_{J\in\mathcal{P}_f}\mathcal{D}_f
d_{J}$.\smallskip

The proof of the next two lemmas is similar to \cite[Lemma 3.9,
Lemma 3.10]{DDH} except some minor changes. For the reader's
convenience, we include the proof here.

\begin{lem} \label{key31} Let $\bk\in I_f$, $v=v_{\bc}\otimes v_{\bk}\in
V^{\otimes n}$. Let $1\neq d\in\BS_n$. If either
$d\not\in\BS_{(2f,n-2f)}$ or $d\in\mathcal{D}_f$, then
$d^{-1}ze_1e_3\cdots e_{2f-1}\in\ann(v)$ for any $z\in\Psi$.
\end{lem}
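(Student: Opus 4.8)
The plan is to compute the action of $d^{-1}ze_1e_3\cdots e_{2f-1}$ on $v = v_{\bc}\otimes v_{\bk}$ directly and show the result vanishes in each of the two cases. The key observation is that applying $e_1e_3\cdots e_{2f-1}$ to a tensor contracts the first $2f$ slots in pairs, so the action is nonzero only when, for each $j\in\{1,3,\dots,2f-1\}$, the $j$th and $(j+1)$st tensor factors are "dual" to each other in the appropriate sense (here, after the reduction to $\widehat{V}$, this means the two indices are equal). Since $v_{\bc}=v_1\otimes v_1\otimes v_2\otimes v_2\otimes\cdots\otimes v_f\otimes v_f$ already has this matched-pairs structure in its first $2f$ slots, while the last $n-2f$ slots carry the pairwise distinct indices $b_1<\cdots<b_{n-2f}$ from $\bk\in I_f$ (all of which exceed $2f\ge$ the values $1,\dots,f$ appearing in $\bc$), the vector $v$ itself is killed by $e_j$ for $j\ge 2f+1$ and has a very rigid pattern of equalities among its entries.

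First I would recall that $v(d^{-1}ze_1\cdots e_{2f-1}) = 0$ is equivalent to $(vd^{-1})(ze_1\cdots e_{2f-1}) = 0$, and that $vd^{-1} = v_{\bc d^{-1}}\otimes\cdots$ is just the place-permuted tensor $v_{\bi}$ where $\bi = (c_1,\dots,c_{2f},b_1,\dots,b_{n-2f})$ permuted by $d^{-1}$; here I must be careful with the left/right conventions fixed earlier in Section 4, so I would write $\bi' := \bi\cdot d^{-1}$ (or whatever the stated convention gives) explicitly. Next, since $z\in\Psi = \BS_{(2^f)}\rtimes\Pi$ stabilizes the set of matched pairs $\{1,2\},\{3,4\},\dots,\{2f-1,2f\}$ and also fixes $v_{\bc}$ up to permutation within each pair — and $v_{\bc}$ is symmetric under swapping the two entries of each pair — I can absorb $z$: either $v_{\bc}z = v_{\bc}$ outright, or at least $z$ only permutes the pair-blocks, which does not affect whether the subsequent $e$-contraction vanishes. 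So it suffices to show $v_{\bi'}(e_1e_3\cdots e_{2f-1}) = 0$ where $\bi'$ is $\bi$ acted on by $d^{-1}$.

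Now the case analysis. In \emph{Case 1}, $d\notin\BS_{(2f,n-2f)}$: then $d$ does not preserve the partition of positions into the first $2f$ and the last $n-2f$, so $d^{-1}$ moves some index from $\{b_1,\dots,b_{n-2f}\}$ into one of the first $2f$ slots, or moves some $c$-index out. For the contraction $e_1\cdots e_{2f-1}$ to be nonzero on $v_{\bi'}$, the entries of $\bi'$ in positions $2j-1,2j$ must be equal for all $j\le f$; but a $b$-index equals at most one other entry of $\bi$ (the $b$'s are distinct and distinct from the $c$'s), and I would check that the pigeonhole/counting forces some pair $(2j-1,2j)$ to receive two unequal entries, giving $\delta_{i'_{2j-1},i'_{2j}}=0$. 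In \emph{Case 2}, $d\in\mathcal{D}_f\subseteq\BS_{2f}$ with $d\ne 1$: here $d$ only permutes the first $2f$ positions, keeping the last $n-2f$ fixed, but $d$ lies in the distinguished coset representatives for $\Psi$ in $\BS_{2f}$, so $d\notin\Psi$ — meaning $d$ genuinely breaks the matched-pair structure $\{1,2\},\dots,\{2f-1,2f\}$. Then $v_{\bc}d^{-1}$ has some pair-slot $(2j-1,2j)$ occupied by entries coming from two \emph{different} original pairs, hence unequal indices among $\{1,\dots,f\}$, and again $e_{2j-1}$ kills it. I would make the "$d$ breaks the pairing" claim precise by noting that $\Psi$ is exactly the setwise stabilizer in $\BS_{2f}$ of the partition $\{\{1,2\},\{3,4\},\dots\}$ (normalizer of $\BS_{(2^f)}$), so any $d\notin\Psi$ fails to preserve it.

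\emph{The main obstacle} I anticipate is bookkeeping with the left/right place-permutation conventions — getting $vd^{-1}$ versus $vd$ and the direction of composition exactly right, since Section 4 deliberately sets up nonstandard conventions — together with cleanly justifying the absorption of $z\in\Psi$ (that $v_{\bc}$ really is invariant, not merely permuted, under the $\BS_{(2^f)}$ part, and that the $\Pi$ part only reshuffles whole pairs and so commutes with the structure of $e_1\cdots e_{2f-1}$). The combinatorial core — "a $d$ outside $\Psi$, or outside $\BS_{(2f,n-2f)}$, creates an unequal pair and hence a vanishing Kronecker delta" — is straightforward once the setup is pinned down, and it parallels \cite[Lemma 3.9]{DDH} closely, so I would lean on that argument with the symplectic "dual pair $i_s = i'_t$" replaced by the orthogonal "equal pair $i_s = i_t$" coming from the form $(v_i,v_j)_1 = \delta_{i,j}$ on $\widehat V$.
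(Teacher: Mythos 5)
Your proposal is correct and follows essentially the same line as the paper's proof: permute by $d^{-1}$, observe that the $e$-contraction forces equal indices within the pairs $\{2j-1,2j\}$, and show that in Case 1 a singly-occurring $b$-index is pushed into one of those pairs while in Case 2 the coset representative $d\in\mathcal{D}_f\setminus\{1\}$ (hence $d\notin\Psi$) breaks the pairing among the $c$-indices. The only slip is in your absorption of $z$: the heuristic ``$v_{\bc}z=v_{\bc}$'' is off-target since $z$ acts on $vd^{-1}$, not on $v_{\bc}$; the correct justification (which your fallback remark essentially captures, and which the paper uses implicitly by running the argument for each $z\in\Psi$) is that $\Psi$ stabilizes the set of pairs $\{1,2\},\dots,\{2f-1,2f\}$ and $ze_1e_3\cdots e_{2f-1}=e_1e_3\cdots e_{2f-1}$, so the nonvanishing condition is independent of $z$.
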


\begin{proof} If $d\not\in
\BS_{(2f,n-2f)}$. Then $d^{-1}$ is not an element of
$\BS_{(2f,n-2f)}$ too. In particular, there is some $j$, $2f+1\leq
j\leq n$, such that $1\leq jd^{-1}\leq 2f$, and hence the basis
vector $v_{b_j}$ with $2f+1\leq b_j\leq m$ appears at position
$jd^{-1}$ in $vd^{-1}$. However, $v_{b_j}$ occurs only once as a
factor in $vd^{-1}$ and hence for any $z\in\Psi$,
$0=vd^{-1}ze_{jd^{-1}-1}$ if $jd^{-1}$ is even,
$0=vd^{-1}ze_{jd^{-1}}$ if $jd^{-1}$ is odd. As the $e_i$'s in
$e_1e_3\cdots e_{2f-1}$ commute we have $vd^{-1}ze_1e_3\cdots
e_{2f-1}=0$ in this case. If
$d\in\mathcal{D}_f=\mathfrak{D}_{f}\cap\BS_{2f}$, then $d$ and hence
$d^{-1}$ as well is not contained in the subgroup $\Psi$ of
$\BS_{2f}$ defined above. Therefore there exists
$j\in\{1,3,\cdots,2f-1\}$ such that $jd^{-1}, (j+1)d^{-1}$ are not
in the same row of $\ft^{(2^{f})}d^{-1}$. Now we see similarly as
above that $ze_1e_3\cdots e_{2f-1}$ annihilates $vd^{-1}$ for any
$z\in\Psi$.
\end{proof}

\begin{lem} Let $S$ be the subset
$$\biggl\{d_1^{-1}e_1e_3\cdots e_{2f-1}\sigma d_2\biggm|\begin{matrix}
&\text{$d_1, d_2\in\mathfrak{D}_{f}$, $d_1\neq 1$,}\\
&\text{$\sigma\in\BS_{\{2f+1,\cdots,n\}}$}\\
\end{matrix}\biggr\}$$ of the basis (\ref{Enyangbasis2}) of
$\bb_n(m)$, and let $U$ be the subspace spanned by $S$. Then $$
B^{(f)}\cap\Bigl(\bigcap_{\bk\in I_{f}}\ann(v_{\bc}\otimes
v_{\bk})\Bigr)=B^{(f+1)}\oplus U.
$$
\end{lem}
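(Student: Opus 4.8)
The plan is to prove the identity
$$
B^{(f)}\cap\Bigl(\bigcap_{\bk\in I_{f}}\ann(v_{\bc}\otimes v_{\bk})\Bigr)=B^{(f+1)}\oplus U
$$
by a two-way inclusion, working throughout with the explicit basis from Corollary~\ref{Enyangbasis2}. First I would verify that the right-hand side is contained in the left. For $B^{(f+1)}$ this is immediate: every diagram in $B^{(f+1)}$ has at least $2(f+1)$ horizontal edges, so $f+1>\ell_o(v_{\bc}\otimes v_{\bk})=f$ forces $B^{(f+1)}\subseteq\ann(v_{\bc}\otimes v_{\bk})$ for every $\bk\in I_f$ (as noted just before Lemma~\ref{step1}); and clearly $B^{(f+1)}\subseteq B^{(f)}$. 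For $U$, I would use Lemma~\ref{key31}: a spanning element $d_1^{-1}e_1e_3\cdots e_{2f-1}\sigma d_2$ of $S$ has $d_1\neq 1$, and writing $d_1$ via $\mathfrak{D}_f=\bigsqcup_{J}\mathcal{D}_f d_J$ one reduces to the two cases covered by Lemma~\ref{key31} (after absorbing the $\BS_{(2^f)}\rtimes\Pi$-part into $z$), yielding $d_1^{-1}(\cdots)\in\ann(v_{\bc}\otimes v_{\bk})$; membership in $B^{(f)}$ is clear since each element of $S$ is a left/right multiple of $e_1e_3\cdots e_{2f-1}$. One should also check $B^{(f+1)}\cap U=0$, which follows because $B^{(f+1)}$ is spanned by the basis elements of Corollary~\ref{Enyangbasis2} with $f'\geq f+1$ while $U$ is spanned by a disjoint subset of those with $f'=f$ (here one uses that $B^{(f)}$ is spanned by the basis elements with parameter $\geq f$, and $U$, $B^{(f+1)}$ pick out disjoint parts of this basis).

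The substantive direction is the reverse inclusion. Take $x\in B^{(f)}$ with $v_{\bc}\otimes v_{\bk}\cdot x=0$ for all $\bk\in I_f$. Using Corollary~\ref{Enyangbasis2} restricted to $B^{(f)}$ (the basis elements with parameter $f'\geq f$), write $x$ modulo $B^{(f+1)}$ as a linear combination of the elements $d_1^{-1}e_1e_3\cdots e_{2f-1}\sigma d_2$ with $d_1,d_2\in\mathfrak{D}_f$, $\sigma\in\BS_{\{2f+1,\dots,n\}}$. The terms with $d_1\neq 1$ lie in $U$ by the first part, so modulo $B^{(f+1)}\oplus U$ we may assume $x\equiv\sum_{\sigma,d_2} a_{\sigma,d_2}\, e_1e_3\cdots e_{2f-1}\sigma d_2$. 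The goal is to show all the coefficients $a_{\sigma,d_2}$ vanish. I would act this element on the vectors $v_{\bc}\otimes v_{\bk}$ and analyze the result. Since $v_{\bc}=v_1\otimes v_1\otimes\cdots\otimes v_f\otimes v_f$ is a "matched" tensor (using the form $(v_i,v_j)_1=\delta_{i,j}$ after the reduction to $\widehat V$), the idempotents $e_1e_3\cdots e_{2f-1}$ act nontrivially and produce $\bigl(\sum_k v_k\otimes v_k\bigr)^{\otimes f}\otimes v_{\bk}$; then $\sigma$ permutes the last $n-2f$ slots and $d_2\in\mathfrak{D}_f$ redistributes everything. Because the $\bk\in I_f$ run over all strictly increasing $(n-2f)$-tuples from $\{2f+1,\dots,m\}$ and $m\geq n$, these vectors are "generic enough" to separate the basis elements: matching up the summands by which basis vectors of $V$ appear in which positions, distinct pairs $(\sigma,d_2)$ produce linearly independent contributions, forcing every $a_{\sigma,d_2}=0$. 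Hence $x\in B^{(f+1)}\oplus U$.

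The main obstacle will be the linear-independence argument in the last step: one must show that, after applying $e_1e_3\cdots e_{2f-1}\sigma d_2$ to all the $v_{\bc}\otimes v_{\bk}$ simultaneously, the images coming from different $(\sigma,d_2)$ cannot cancel. This is where the hypothesis $m\geq n$ is essential — it guarantees enough distinct basis vectors $v_{2f+1},\dots,v_m$ to place in the $\bk$-slots so that the "pattern" of repeated indices created by the contraction–expansion of the $e_i$'s, together with the positions dictated by $d_2$, is recoverable from the output tensor. Concretely I expect to fix a convenient $\bk$ (or a small family of them) so that the coefficient of a well-chosen simple tensor $v_{\bj}$ in $(v_{\bc}\otimes v_{\bk})\cdot e_1e_3\cdots e_{2f-1}\sigma d_2$ is $1$ for exactly one $(\sigma,d_2)$ and $0$ for all others; this is the analogue of the separation argument in \cite[Lemma 3.10]{DDH}, adapted to the orthogonal form $(v_i,v_j)_1=\delta_{i,j}$, and carrying it out carefully is the crux. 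Everything else is bookkeeping with the Enyang/Corollary~\ref{Enyangbasis2} basis and the double-coset decompositions $\BS_{2f}=\bigsqcup_{d\in\mathcal{D}_f}\Psi d$ and $\mathfrak{D}_f=\bigsqcup_J\mathcal{D}_f d_J$.
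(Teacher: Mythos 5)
Your outline follows the same skeleton as the paper: establish the easy inclusion via the orthogonal-length bound and Lemma~\ref{key31}, reduce modulo $B^{(f+1)}\oplus U$ to an element of the form $e_1e_3\cdots e_{2f-1}\sum_{d\in\mathfrak{D}_f}z_d\,d$, then evaluate against the vectors $v_{\bc}\otimes v_{\bk}$. The extra check $B^{(f+1)}\cap U=0$ is a nice touch (the paper leaves this implicit). Where you stop short is exactly at the step you flag as ``the crux,'' and your proposed route through it — picking a single convenient $\bk$ and matching coefficients of a well-chosen simple tensor — misses the device that makes the paper's argument clean and would be quite painful without it.

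The issue is that $(v_{\bc}\otimes v_{\bk})\,e_1e_3\cdots e_{2f-1}$ is not a single simple tensor but the large sum $\bigl(\sum_k v_k\otimes v_k\bigr)^{\otimes f}\otimes v_{\bk}$, so ``the coefficient of a well-chosen $v_{\bj}$'' receives contributions from many different $(\sigma,d_2)$ unless you first cut this sum down. The paper handles this by projecting onto the $GL_m$-weight of $v_{\bc}\otimes v_{\bk}$ itself: since $V^{\otimes n}$ is a direct sum of weight spaces and the whole vector $v\,x$ vanishes, each weight component vanishes, and the $\lambda^{(1)}$-component of $v_{\bc}e_1e_3\cdots e_{2f-1}$ collapses to the finite sum $\sum_{y\in\Psi}v_{\bc}y$. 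Only after this reduction does the double-coset bookkeeping ($\mathfrak{D}_f=\bigsqcup_J\mathcal{D}_f d_J$, then $\BS_{2f}=\bigsqcup_{d_1}\Psi d_1$) separate the pieces by inspection of which index ranges occupy which tensor slots; and the final step is not coefficient-matching at a single $\bk$ but the observation that $v_{\bk}z=0$ for all $\bk\in I_f$ forces $z=0$ because $\BS_{\{2f+1,\dots,n\}}$ acts faithfully on the span of the $v_{\bk}$ (here $m-2f\geq n-2f$ is used). Without the weight-projection step your separation argument as sketched would not go through cleanly, so this is the concrete gap to fill before your proposal becomes a proof.
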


\begin{proof} Since $\ell_o(v_{\bc}\otimes v_{\bk})=f$, it follows that
$B^{(f+1)}\subseteq\ann(v_{\bc}\otimes v_{\bk})$. This, together
with Lemma \ref{key31}, shows that the right-hand side is contained
in the left-hand side.

Now let $x\in B^{(f)}\cap\bigl(\cap_{\bk\in
I_{f}}\ann(v_{\bc}\otimes v_{\bk})\bigr)$. Using Lemma \ref{key31}
and the basis (\ref{Enyangbasis2}) of $\bb_n(-2m)$, we may assume
that $x=e_1e_3\cdots e_{2f-1}\sum_{d\in\mathfrak{D}_{f}}z_d d$,
where $\nu=\nu_f=((2^f),(n-2f))$ and the coefficients $z_d$, where
$d\in\mathfrak{D}_{f}$, are taken from
$K\BS_{\{2f+1,\cdots,n\}}\subseteq K\BS_{n}$. We then have to show
$x=0$.

Fix $\bk\in I_f$ and write $v=v_{\bc}\otimes v_{\bk}$. Let
$\lam^{(1)}, \lam^{(2)}$ be the $GL_m$-weights of $v_{\bc}$ and
$v_{\bk}$ respectively. Since $V^{\otimes n}$ is the direct sum of
its $GL_m$-weight spaces, we conclude $(vx)_{\mu}=0$ for all
$\mu\in\Lambda(m,n)$. In particular,
$$\begin{aligned} 0&=(vx)_{\lam}=\bigl((v_{\bc}\otimes
v_{\bk})x\bigr)_{\lam}=\sum_{d\in\mathfrak{D}_{f}}\Bigl(v_{\bc}e_1e_3\cdots
e_{2f-1}\otimes
v_{\bk}\Bigr)_{\lam}z_d d\\
&=\sum_{d\in\mathfrak{D}_{f}}\Bigl((v_{\bc}e_1e_3\cdots
e_{2f-1})_{\lam^{(1)}}\otimes v_{\bk}\Bigr)z_d d.\end{aligned}
$$
By definition, it is easy to see that
$$\bigl(v_{\bc}e_1e_3\cdots
e_{2f-1}\bigr)_{\lam^{(1)}}=\sum_{y\in\Psi}v_{{\bc}}y.
$$
Let us denote
this element by $\widehat{v}$.
Then $\sum_{d\in\mathfrak{D}_{f}}\bigl(\widehat{v}\otimes
v_{\bk}\bigr)z_d d=0$.\smallskip

We write $d=d_1d_{J}$, where $d_1\in\mathcal{D}_f,
J\in\mathcal{P}_f$. Then $$ \bigl(\widehat{v}\otimes
v_{\bk}\bigr)z_d d=\bigl(\widehat{v}\otimes v_{\bk}z_d\bigr)
d=\bigl(\widehat{v}\otimes v_{\bk}z_d\bigr)
d_1d_{J}=\bigl(\widehat{v}d_1\otimes v_{\bk}z_{d_1d_{J}}\bigr)d_{J}.
$$
If $J, L\in\mathcal{P}_f, J\neq L$, choose $1\leq l\leq n$ with
$l\in J$ but $l\not\in L$. Thus there exists an
$j\in\{1,2,\cdots,2f\}$ which is mapped by $d_J$ to $l$, but
$(l)d_L^{-1}>2f$. Note that for any $d\in\mathcal{D}_f$ all basis
vectors $v_i$ occurring in $\widehat{v}d$ as factors have index in
the set $\{1,2,\cdots,f\}$, and all those $v_i$ occurring in
$v_{\bk}z_{dd_J}$, respectively in $v_{\bk}z_{dd_L}$, have index $i$
between $2f+1$ and $m$. Let $v_{i_1}\otimes\cdots\otimes v_{i_n}$ be
a simple tensor involved in $\bigl(\widehat{v}d_1\otimes
v_{\bk}z_{d_1d_{J}}\bigr)d_{J}$ and $v_{j_1}\otimes\cdots\otimes
v_{j_n}$ be a simple tensor involved in $\bigl(\widehat{v}d_2\otimes
v_{\bk}z_{d_2d_L}\bigr)d_{L}$ for $d_1,d_2\in\mathcal{D}_f$. Then,
by the above, we have that $2f+1\leq j_l\leq m$, and $v_{i_l}=v_k$
for some $1\leq k\leq f$. Consequently the simple tensors $v_{\bi},
\bi\in I(m,n)$ involved in $\bigl\{(\widehat{v}d_1\otimes
v_{\bk}z_{d_1d_J})d_{J}\bigr\}$ and in
$\bigl\{(\widehat{v}d_2\otimes v_{\bk}z_{d_2d_L})d_{L}\bigr\}$ are
disjoint, hence both sets are linearly independent. We conclude that
$\sum_{d\in\mathcal{D}_f}\bigl(\widehat{v}d\otimes
v_{\bk}z_{dd_J}\bigr)d_{J}=0$ for each $J\in\mathcal{P}_f$, hence
$\sum_{d_1\in\mathcal{D}_f}\widehat{v}d_1\otimes
v_{\bk}z_{d_1d_J}=0$.

Note that $\widehat{v}d_1$ is a linear combination of basis tensors
$v_{\bi}=v_{i_1}\otimes\cdots\otimes v_{i_{2f}}$, with
$\bi\in\bc\Psi d_1$, and that we obtain by varying $d_1$ through
$\mathcal{D}_f$ precisely the partition of $\BS_{2f}$ into
$\Psi$-cosets. These are mutually disjoint. We conclude that the
basic tensors involved in $\widehat{v}d_1$ are disjoint for
different choices of $d_1\in\mathcal{D}_f$. Therefore, the equality
$\sum_{d_1\in\mathcal{D}_f}\widehat{v}d_1\otimes
v_{\bk}z_{d_1d_{J}}=0$ implies that $\widehat{v}d_1\otimes
v_{\bk}z_{d_1d_{J}}=0$ for each fixed $d_1\in\mathcal{D}_f$. Now we
vary $\bk\in I_f$. The $K$-span of $\bigl\{v_{\bk}\bigm|\bk\in
I_f\bigr\}$ is isomorphic to the tensor space $V^{\otimes n-2f}$ for
the symmetric group $\BS_{\{2f+1,\cdots,n\}}\cong\BS_{n-2f}$. Since
$m-2f\geq n-2f$, hence $\BS_{\{2f+1,\cdots,n\}}$ acts faithfully on
it. This implies $z_{d_1d_{J}}=0$ for all $d_1\in\mathcal{D}_{f},
J\in \mathcal{P}_f$. Thus $x=0$ and the lemma is proved.\end{proof}

The following corollary can be proved in exactly the same way as in
\cite[Corollary 3.11]{DDH}.

\begin{cor} Let $d\in\mathfrak{D}_{f}, \nu=\nu_f$. Then
$$B^{(f)}\cap\Bigl(\bigcap_{\bk\in I_f}\ann\bigl((v_{\bc}\otimes
v_{\bk})d\bigr) \Bigr)
= B^{(f+1)} \oplus \Biggl(\bigoplus  K\tilde{d}_{1}^{-1}e_1e_3\cdots
e_{2f-1}\sigma d_2\Biggr)
$$ where the rightmost direct sum is taken over all $\tilde{d}_1, d_2
\in \mathfrak{D}_{f}$ such that $\tilde{d}_1 \ne d$ and all $\sigma
\in \BS_{\{2f+1,\cdots,n\}}$. Hence
$B^{(f)}\cap\Bigl(\bigcap_{d\in\mathfrak{D}_{f}}\bigcap_{\bk\in
  I_f}\ann\bigl((v_{\bc}\otimes v_{\bk})d\bigr) \Bigr) =B^{(f+1)}$.
\end{cor}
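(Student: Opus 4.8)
The plan is to derive the corollary from the lemma just proved by a translation argument, exploiting that every $d\in\mathfrak{D}_f\subseteq\BS_n$ is a unit of $\bb_n(m)$. First I would observe that for $d\in\BS_n$ and any $v\in V^{\otimes n}$ one has $x\in\ann(vd)$ if and only if $vdx=0$, i.e.\ $dx\in\ann(v)$, so that $\ann\bigl((v_{\bc}\otimes v_{\bk})d\bigr)=d^{-1}\ann(v_{\bc}\otimes v_{\bk})$. Since $B^{(f)}$ and $B^{(f+1)}$ are two-sided ideals and $d^{-1}$ is a unit, left multiplication by $d^{-1}$ is a linear automorphism of $\bb_n(m)$ carrying $B^{(f)}$ onto itself and $B^{(f+1)}$ onto itself; applying it to the displayed identity of the preceding lemma gives
\[
B^{(f)}\cap\Bigl(\bigcap_{\bk\in I_f}\ann\bigl((v_{\bc}\otimes v_{\bk})d\bigr)\Bigr)
=d^{-1}\bigl(B^{(f+1)}\oplus U\bigr)=B^{(f+1)}\oplus d^{-1}U ,
\]
where $U$ is the span of the basis elements $d_1^{-1}e_1e_3\cdots e_{2f-1}\sigma d_2$ of Corollary~\ref{Enyangbasis2} having $1\neq d_1\in\mathfrak{D}_f$. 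Thus it only remains to locate $d^{-1}U$ inside $B^{(f)}/B^{(f+1)}$.

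For this I would pass to the diagram description: modulo $B^{(f+1)}$, the space $B^{(f)}$ has as a basis the Brauer $n$-diagrams with exactly $f$ horizontal edges in the top row (and hence $f$ in the bottom row), and this diagram basis coincides with the image of the basis $\{d_1^{-1}e_1e_3\cdots e_{2f-1}\sigma d_2\}$ of Corollary~\ref{Enyangbasis2}. Concatenating the permutation diagram $d^{-1}$ on the left of such a diagram neither creates nor destroys a horizontal edge; it merely relabels the top row by $d^{-1}$. Hence left multiplication by $d^{-1}$ permutes this basis of $B^{(f)}/B^{(f+1)}$, leaves the ``bottom'' parameter $d_2$ untouched, and induces on the ``top'' parameter a bijection of $\mathfrak{D}_f$ (together with a left translation of the through-strand permutation $\sigma\in\BS_{\{2f+1,\cdots,n\}}$ that, for each fixed top parameter, is again a bijection). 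Because $d^{-1}\bigl(e_1e_3\cdots e_{2f-1}\sigma d_2\bigr)=d^{-1}e_1e_3\cdots e_{2f-1}\sigma d_2$ is, by the very definition of the basis and the hypothesis $d\in\mathfrak{D}_f$, precisely the basis element with top parameter $d$, this bijection sends $1$ to $d$, and therefore carries $\{d_1\neq 1\}$ onto $\{\tilde d_1\neq d\}$. It follows that $d^{-1}U$ reduces modulo $B^{(f+1)}$ to the span of $\{\,\tilde d_1^{-1}e_1e_3\cdots e_{2f-1}\sigma d_2 : \tilde d_1\in\mathfrak{D}_f\setminus\{d\},\ d_2\in\mathfrak{D}_f,\ \sigma\in\BS_{\{2f+1,\cdots,n\}}\,\}$, which is the first displayed equality of the corollary. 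The final assertion then drops out by intersecting over all $d\in\mathfrak{D}_f$: a class in $B^{(f)}/B^{(f+1)}$ lying in $\bigoplus_{\tilde d_1\neq d}K\,\tilde d_1^{-1}e_1e_3\cdots e_{2f-1}\sigma d_2$ for every $d$ can have no nonzero component at any top parameter, so it must be zero, and the intersection of the ideals is exactly $B^{(f+1)}$.

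The step I expect to be the genuine work is the middle paragraph: one has to check carefully that when $(d_1d)^{-1}e_1e_3\cdots e_{2f-1}$ is rewritten in Enyang normal form the only corrections introduced lie in $B^{(f+1)}$ (so that a well-defined top parameter in $\mathfrak{D}_f$ is attached to each such element modulo $B^{(f+1)}$), and that the resulting assignment on the data $(d_1,\sigma)$ is honestly a bijection sending the distinguished value $d_1=1$ to $d_1=d$. Equivalently, one needs that Enyang's basis of $B^{(f)}$ reduces modulo $B^{(f+1)}$ to the natural diagram basis, on which left multiplication by a permutation acts by a permutation matrix that only permutes the top row. Granted this --- essentially the cell-module structure of $B^{(f)}/B^{(f+1)}$ --- everything above is formal, given the preceding lemma.
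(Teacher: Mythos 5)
Your proposal is correct and follows the translation argument that the paper delegates to \cite[Corollary 3.11]{DDH}: conjugating the preceding lemma by the unit $d^{-1}\in K\BS_n\subset\bb_n(m)$, using that $B^{(f)}$ and $B^{(f+1)}$ are two-sided ideals, and then identifying $d^{-1}U$ with the displayed span via a bijection on the ``top parameter'' of the Enyang basis.

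The one point you flag as ``the genuine work'' --- whether rewriting $(d_1d)^{-1}e_1e_3\cdots e_{2f-1}\sigma d_2$ in Enyang normal form introduces corrections in $B^{(f+1)}$ --- is actually a non-issue, and you can state it cleanly. The Enyang basis elements $d_1^{-1}e_1e_3\cdots e_{2f-1}\sigma d_2$ of $\bb_n(m)$ with fixed $f$ coincide \emph{exactly} with the Brauer $n$-diagrams having precisely $f$ horizontal edges in the top row and $f$ in the bottom row; no passage to the quotient $B^{(f)}/B^{(f+1)}$ is needed. Multiplying such a diagram on the left by the permutation diagram $d^{-1}$ creates no closed loops and neither annihilates nor creates horizontal edges, so the product is again a \emph{single} diagram with exactly $f$ horizontal edges in each row. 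Consequently the rewriting is exact: $d^{-1}\bigl(d_1^{-1}e_1e_3\cdots e_{2f-1}\sigma d_2\bigr)=\tilde d_1^{-1}e_1e_3\cdots e_{2f-1}\tilde\sigma d_2$ on the nose, where $\tilde d_1\in\mathfrak{D}_f$ is the unique element whose top-row pairing is $\bigl\{\{(2i-1)d_1d,(2i)d_1d\}\bigr\}_{i=1}^f$ and $\tilde\sigma\in\BS_{\{2f+1,\dots,n\}}$ is determined accordingly. Since $d\mapsto$ (top pairing of $d$) is a bijection from $\mathfrak{D}_f$ onto the set of pairings, and $d$ acts bijectively on pairings, the assignment $d_1\mapsto\tilde d_1$ is a bijection of $\mathfrak{D}_f$ sending $1$ to $d$, with $d_2$ fixed and $\sigma\mapsto\tilde\sigma$ bijective for each fixed $d_1$. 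This gives $d^{-1}U=\bigoplus_{\tilde d_1\neq d}K\tilde d_1^{-1}e_1e_3\cdots e_{2f-1}\sigma d_2$ as an exact equality, and then both displayed statements of the corollary follow just as you describe, including the final intersection step.
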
\bigskip

\noindent{\bf Proof of part b) in Theorem \ref{mainthm1} in the case
$m\geq n$}: We have seen that $\ann_{\bb_n(m)}\bigl(V^{\otimes
n}\bigr)\subseteq B^{(1)}$, and the above Corollary implies that
$\ann_{\bb_n(m)}\bigl(V^{\otimes n}\bigr)\subseteq B^{(f+1)}$
provided that $\ann_{\bb_n(m)}\bigl(V^{\otimes n}\bigr)\subseteq
B^{(f)}$. Thus by induction on $f$ we have
$\ann_{\bb_n(m)}\bigl(V^{\otimes n}\bigr)\subseteq B^{(f)}$ for all
natural numbers $f$. Since $B^{(f+1)}=0$ for $f>[n/2]$ it follows
that $\ann_{\bb_n(m)}\bigl(V^{\otimes n}\bigr)=0$. In other words,
$\widehat{\varphi}$ and hence $\varphi$ is injective if $m\geq n$.
By comparing dimension, we deduce that $\varphi$ is an isomorphism
onto
$$\End_{K GO(V)}\bigl(V^{\otimes n}\bigr)=\End_{K
O(V)}\bigl(V^{\otimes n}\bigr).$$ This completes the proof of part
b) in Theorem \ref{mainthm1} in the case $m\geq n$.

\bigskip\bigskip

\section{Proof of part b) in Theorem \ref{mainthm1} in the case $m<n$}

The purpose of this section is to give the proof the part b) in
Theorem \ref{mainthm1} in the case where $m<n$. Our approach is the
same as that used in \cite[Section 4]{DDH}.
\smallskip

To prove $\varphi(\bb_n(m))=\End_{KO(V)}\bigl(V^{\otimes n}\bigr)$,
we can assume without loss of
generality that $K=\overline{K}$ is algebraically closed. This is because, on the one hand, the $K$-dimension
of $\End_{KO(V)}\bigl(V^{\otimes n}\bigr)$ does not depend on the choice of the infinite field $K$; on the other hand,
the $\overline{K}$-dimension of $\varphi(\bb_n(m)_{\overline{K}})$ is the same as the $K$-dimension of $\varphi(\bb_n(m)_{K})$.
We fix $m_0\in\mathbb{N}$ such that $m_0\geq m$ and $m_0-m$ is even. We
denote by $\mathfrak{so}_{m_0}, \mathfrak{so}_{m}$ the special
orthogonal Lie algebras over $\mathbb{C}$. Let
$\widetilde{\mathfrak{g}}:=\mathfrak{so}_{m_0},
\mathfrak{g}:=\mathfrak{so}_{m}$. Recall that
$\mathcal{R}=\mathbb{Z}[1/2]$. Let $\bU_{\Q}$ (resp.,
$\bU_{\mathcal{R}}$) be the universal enveloping algebra of
$\mathfrak{g}$ over $\Q$ (resp., Kostant's $\mathcal{R}$-form in
$\bU_{\Q}$). Let $q$ be an indeterminant over $\mathcal{R}$. Let
$\mU_{\Q(q)}$ (resp., $\mU_{\mathcal{R}}$) be the Drinfel'd--Jimbo
quantized enveloping algebra of $\mathfrak{g}$ over $\Q(q)$ (resp.,
Lusztig's $\mathcal{R}[q,q^{-1}]$-form in $\mU_{\Q(q)}$). Let
$\bU_{K}:=\bU_{\mathcal{R}}\otimes_{\mathcal{R}}K,
\mU_{K}:=\mU_{\mathcal{R}}\otimes_{\mathcal{R}}K$. By putting a
``$\sim$" on the head, we can define similar notations for
$\widetilde{\mathfrak{g}}$.

\smallskip

Let $\widetilde{V}_{\mathcal{R}}$ be a free module of rank $m_0$ over
$\mathcal{R}$. Assume that $\widetilde{V}_{\mathcal{R}}$ is equipped
with a symmetric bilinear form $(\, ,\,  )$ as well as an ordered basis
$\bigl\{{v}_1,{v}_2,\cdots,{v}_{m_0}\bigr\}$ satisfying $({v}_i,
{v}_{j})=\delta_{i,m_0+1-j}$. For any commutative $\mathcal{R}$
algebra $K$, we set
$\widetilde{V}_{K}:=\widetilde{V}_{\mathcal{R}}\otimes_{\mathcal{R}}K$.
Let ${\iota}$ be the $K$-linear injection from $V_K \cong
V_{\mathcal{R}}\otimes_{\mathcal{R}}K$ into $\widetilde{V}_K$ defined
by
$$\sum_{i=1}^{m}k_iv_i\mapsto\sum_{i=1}^{m} k_i{v}_{i+(m_0-m)/2},\quad\forall\,k_1,\cdots,k_{m}\in
K.
$$
Let ${\pi}$ be the $K$-linear surjection from $\widetilde{V}_K$ onto
$V_K$ defined by
$$\sum_{i=1}^{m_0}k_i{v}_i\mapsto\sum_{i=1}^{m}k_{i+(m_0-m)/2}{v}_{i}, \quad\forall\,k_1,\cdots,k_{m_0}\in
K.$$ Then, ${\iota}$ induces an identification of ${\mathfrak{g}}$
as a subalgebra of $\widetilde{\mathfrak{g}}$, and also an
identification of $SO_m(K)$ (resp., $O_m(K)$) as a subgroup of
$SO_{m_0}(K)$ (resp., $O_{m_0}(K)$). Henceforth, we fix these
embeddings. The following result is well-known (cf.
\cite[Part I, Lemma 7.16, 7.17(6)]{Ja}).

\begin{lem} \label{lm51} We have
$$\begin{aligned}
\End_{\bU_{K}(\widetilde{\mathfrak{g}})}\bigl(\widetilde{V}_K^{\otimes
n}\bigr)&=\End_{KSO_{m_0}(K)}\bigl(\widetilde{V}_K^{\otimes
n}\bigr),\\
\End_{\bU_{K}(\mathfrak{g})}\bigl(V_K^{\otimes
n}\bigr)&=\End_{KSO_{m}(K)}\bigl({V}_K^{\otimes
n}\bigr).\end{aligned}
$$
\end{lem}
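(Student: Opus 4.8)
The two displayed equalities are proved in exactly the same way, so I will focus on the second, concerning $\mathfrak{g}=\mathfrak{so}_m$ and $SO_m(K)$. The plan is to recognize $\bU_K(\mathfrak{g})$ as the distribution algebra (hyperalgebra) of the algebraic group $SO_m$, and then to invoke the standard comparison between rational $G$-homomorphisms and $\mathrm{Dist}(G)$-homomorphisms for connected $G$. First I would recall that Kostant's $\mathcal{R}$-form $\bU_{\mathcal{R}}$, base-changed to $K$, is by construction the distribution algebra $\mathrm{Dist}(\mathrm{Spin}_m)$ of the split, simply connected Chevalley group scheme $\mathrm{Spin}_m$ over $\mathcal{R}=\mathbb{Z}[1/2]$. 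Since $\ch K\neq 2$, the canonical two-fold central isogeny $\mathrm{Spin}_m\to SO_m$ has kernel $\mu_2$, which is étale; hence the isogeny is étale and induces an isomorphism $\mathrm{Dist}(\mathrm{Spin}_m)\cong\mathrm{Dist}(SO_m)$. Consequently $\bU_K(\mathfrak{g})\cong\mathrm{Dist}(SO_m)$ as $K$-algebras, and this isomorphism is compatible with the module structures on $V_K^{\otimes n}$: the natural module $V_K$ and its admissible $\mathcal{R}$-lattice are $SO_m$-equivariant, and $\mathrm{Dist}$ acts by differentiating the rational action. In particular $\End_{\bU_K(\mathfrak{g})}\bigl(V_K^{\otimes n}\bigr)=\End_{\mathrm{Dist}(SO_m)}\bigl(V_K^{\otimes n}\bigr)$.

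Next I would pass from $\mathrm{Dist}(SO_m)$-endomorphisms to $SO_m$-endomorphisms using \cite[Part~I, 7.16, 7.17(6)]{Ja}: since $SO_m$ is a connected, smooth algebraic group (using $\ch K\neq 2$), a subspace of any finite-dimensional rational $SO_m$-module is an $SO_m$-submodule if and only if it is stable under $\mathrm{Dist}(SO_m)$. Applying this to the graph of a $\mathrm{Dist}(SO_m)$-linear endomorphism of $V_K^{\otimes n}$, regarded inside $V_K^{\otimes n}\oplus V_K^{\otimes n}$, one obtains $\Hom_{SO_m}\bigl(V_K^{\otimes n},V_K^{\otimes n}\bigr)=\Hom_{\mathrm{Dist}(SO_m)}\bigl(V_K^{\otimes n},V_K^{\otimes n}\bigr)$. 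Since $K$ is infinite, $SO_m(K)$ is Zariski-dense in $SO_m$, so commuting with the abstract group $SO_m(K)$ is the same as commuting with the rational action; thus the left-hand side is $\End_{KSO_m(K)}\bigl(V_K^{\otimes n}\bigr)$. If one prefers to sidestep any issue with $K$ being non-perfect, one may first extend scalars to $\overline{K}$ — both endomorphism algebras commute with the flat base change $K\to\overline{K}$, each being cut out by linear conditions inside a finite-dimensional space, respectively an $H^0$ of a rational module — and then apply the cited results over the algebraically closed field.

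Combining these two steps gives $\End_{\bU_K(\mathfrak{g})}\bigl(V_K^{\otimes n}\bigr)=\End_{KSO_m(K)}\bigl(V_K^{\otimes n}\bigr)$, and the first displayed identity follows verbatim with $(\mathfrak{g},SO_m,V)$ replaced by $(\widetilde{\mathfrak{g}},SO_{m_0},\widetilde{V})$. I expect the only point requiring care — and the only place where the hypothesis $\ch K\neq 2$ is genuinely used — to be the identification $\bU_K(\mathfrak{g})\cong\mathrm{Dist}(SO_m)$, i.e. that in odd characteristic the hyperalgebra does not distinguish $SO_m$ from its simply connected cover; this fails in characteristic $2$, where the central kernel $\mu_2$ is infinitesimal. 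Everything else is a routine application of the cited facts from \cite{Ja}.
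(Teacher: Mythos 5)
Your proposal is correct and follows exactly the route the paper has in mind: the paper gives no proof of Lemma~\ref{lm51}, merely calling it well-known with a pointer to \cite[Part~I, Lemma 7.16, 7.17(6)]{Ja}, and your argument (identify $\bU_K$ with $\mathrm{Dist}(SO_m)$ via the \'etale isogeny from $\mathrm{Spin}_m$ in odd characteristic, pass from $\mathrm{Dist}$-endomorphisms to rational $SO_m$-endomorphisms via the graph trick and Jantzen's comparison for connected groups, then use Zariski density of $SO_m(K)$ for $K$ infinite) is precisely the standard argument behind that citation. The care you take over $\ch K \neq 2$ at the isogeny step and over base change to $\overline{K}$ is exactly the right set of points to watch.
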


Note that the homomorphism $\iota$ and $\pi$ naturally induce a
linear map
$$\begin{aligned} \Theta_0: \End_{KSO_{m_0}(K)}\bigl(\widetilde{V}_K^{\otimes
n}\bigr)&\rightarrow\End_{KSO_m(K)}\bigl(V_K^{\otimes n}\bigr)\\
f&\mapsto \pi\circ f\circ\iota.
\end{aligned}$$ By restriction, we get a linear map (again denoted by
$\Theta_0$) from
$\End_{KO_{m_0}(K)}\bigl(\widetilde{V}_K^{\otimes n}\bigr)$ to
$\End_{KO_m(K)}\bigl(V_K^{\otimes n}\bigr)$. Note that, $\Theta_0$
is in general not an algebra map.\medskip

\begin{lem} \label{lm52} We have \begin{enumerate}
\item[(1)] the map $$
\Theta_0: \End_{KSO_{m_0}(K)}\bigl(\widetilde{V}_K^{\otimes
n}\bigr)\rightarrow\End_{KSO_m(K)}\bigl(V_K^{\otimes n}\bigr)
$$
is surjective.
\item[(2)] the map $$
\Theta_0: \End_{KO_{m_0}(K)}\bigl(\widetilde{V}_K^{\otimes
n}\bigr)\rightarrow\End_{KO_m(K)}\bigl(V_K^{\otimes n}\bigr)
$$
is surjective.
\end{enumerate}
\end{lem}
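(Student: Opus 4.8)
The plan is to treat part (1) first and deduce part (2) from it, and to prove (1) by reducing to $K=\overline K$, identifying $\Theta_0$ explicitly via the decomposition of $\widetilde V$ on restriction, settling the statement over $\mathbb C$ by classical invariant theory, and finally transferring to arbitrary odd characteristic by a base-change argument over $\mathcal R=\mathbb Z[1/2]$.

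First I would reduce to the case $K=\overline K$: for each of the four endomorphism algebras one has $\End_{KG(K)}(M)\otimes_K\overline K=\End_{G(\overline K)}(M_{\overline K})$ and the formation of $\Theta_0$ commutes with this base change, so surjectivity over $\overline K$ gives it over $K$. Next, note that with the embeddings fixed via $\iota,\pi$ one has, as $SO_m(\overline K)$- (resp. $O_m(\overline K)$-) modules, $\widetilde V|_{SO_m}\cong V\oplus U$, where $U$ is the $(m_0-m)$-dimensional \emph{trivial} module spanned by the basis vectors outside the middle block (the orthogonal complement of $\iota(V)$). Hence $V^{\otimes n}$ is a direct summand of $\widetilde V^{\otimes n}|_{SO_m}$ with $\iota^{\otimes n}$ the inclusion and $\pi^{\otimes n}$ the projection, $\pi^{\otimes n}\iota^{\otimes n}=\id$, so $\Theta_0(f)=\pi^{\otimes n}f\iota^{\otimes n}$ is simply the ``$V^{\otimes n}$-corner'' of $f$ — i.e. the restriction to $\End_{SO_{m_0}}(\widetilde V^{\otimes n})\subseteq\End_{SO_m}(\widetilde V^{\otimes n})$ of the compression $g\mapsto ege$ by the $SO_m$-equivariant idempotent $e:=\iota^{\otimes n}\pi^{\otimes n}$. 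Compression from the \emph{full} algebra $\End_{SO_m}(\widetilde V^{\otimes n})$ is already surjective (it has the $SO_m$-equivariant section $h\mapsto\iota^{\otimes n}h\pi^{\otimes n}$), so the content of (1) is that cutting down to $SO_{m_0}$-invariants loses nothing. Granting (1), part (2) follows at once: if $h\in\End_{O_m(\overline K)}(V^{\otimes n})\subseteq\End_{SO_m(\overline K)}(V^{\otimes n})$, choose $f$ with $\Theta_0(f)=h$; since the order-$2$ element $\theta$ for $O_{m_0}$ corresponds under $\iota,\pi$ to $\theta$ for $O_m$ on the middle block and to $\id$ on $U$, one gets $\Theta_0(\theta f\theta)=\theta\,\Theta_0(f)\,\theta=\theta h\theta=h$, and (using $\tfrac12\in\overline K$) the element $\tfrac12(f+\theta f\theta)$ lies in $\End_{O_{m_0}(\overline K)}(\widetilde V^{\otimes n})$ and still maps to $h$.

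For (1) over $K=\mathbb C$: the first and second fundamental theorems of invariant theory for the special orthogonal group present $\End_{SO_m(\mathbb C)}(V^{\otimes n})$ as spanned by the images of the Brauer $n$-diagrams together with the ``$\det$-type'' invariants, and likewise for $SO_{m_0}$. A direct computation shows $\iota$ and $\pi$ intertwine the place-permutation, contraction and expansion operators, so $\Theta_0$ sends the image of each Brauer diagram $D$ in $\End_{SO_{m_0}(\mathbb C)}(\widetilde V^{\otimes n})$ to the image of the same diagram in $\End_{SO_m(\mathbb C)}(V^{\otimes n})$, and analogously matches the $\det$-type invariants; hence $\Theta_0$ is surjective over $\mathbb C$ (this step uses only $m_0>m$). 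To pass to an arbitrary algebraically closed field $\overline K$ of odd characteristic I would work over $\mathcal R=\mathbb Z[1/2]$: the map $\Theta_0$ and all four modules are defined over $\mathcal R$; by Lemma \ref{lm51}, the tilting-module theory of Section 3 (Lemma \ref{key23} and the remarks around Lemma \ref{key21}) and the standard tilting theory of the connected reductive groups $SO_m,SO_{m_0}$, the modules $\End(\widetilde V_{\mathcal R}^{\otimes n})$ and $\End(V_{\mathcal R}^{\otimes n})$ are tilting (over $SO_{m_0}$ resp. $SO_m$, and $\widetilde V_{\mathcal R}^{\otimes n}|_{SO_m}$ is $SO_m$-tilting), so their invariant submodules are free $\mathcal R$-modules whose formation commutes with base change and whose ranks equal the $\mathbb C$-dimensions (the content behind Lemma \ref{key2}). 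Since $\Theta_0\otimes_{\mathcal R}\mathbb Q$ is surjective while source and target have the same $\mathcal R$-ranks as over $\mathbb C$, the cokernel of the $\mathcal R$-form of $\Theta_0$ is a torsion $\mathcal R$-module; assembling the mutually compatible $\mathbb Z[1/2]$-bases furnished by Cliff's basis (Section 2, Corollary \ref{basechange}) and by the $\nabla$-filtrations of the endomorphism modules (Section 3), one checks this cokernel vanishes, so $\Theta_0$ is surjective over every $\overline K$ of odd characteristic.

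\textbf{Where the work is.} All the structural steps — the decomposition $\widetilde V|_{SO_m}=V\oplus U$, the ``corner'' description of $\Theta_0$, the implication $(1)\Rightarrow(2)$, and the $\mathbb C$-case — are routine. The genuine obstacle is the transfer from characteristic $0$ to odd characteristic: one must rule out a drop in the rank of $\Theta_0$, equivalently $p$-torsion in the cokernel of its integral form for odd primes $p$, and this is what forces one to set everything up over $\mathbb Z[1/2]$ with mutually compatible integral forms — Cliff's basis for the coordinate algebras from Section 2 together with the tilting/good-filtration theory of Section 3 — so that the dimension-independence of Lemma \ref{key2} can actually be brought to bear.
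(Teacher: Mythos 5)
Your reduction of (2) to (1) via the averaging trick $f\mapsto\tfrac12(f+\theta f\theta)$ is correct and in fact slicker than the paper's treatment (the paper splits into cases by parity of $m$ and argues with the $\pm1$-eigenspaces of $\theta$ on $SO$-invariants); the two intertwining relations $\pi^{\otimes n}\theta_{m_0}=\theta_m\pi^{\otimes n}$ and $\theta_{m_0}\iota^{\otimes n}=\iota^{\otimes n}\theta_m$ that you need do hold in both parities. The decomposition $\widetilde V|_{SO_m}\cong V\oplus U$ with $U$ trivial and the ``corner'' description of $\Theta_0$ are also correct and are a useful way to think about the statement.

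The gap is in part (1). First, the characteristic-zero step: you assert that $\Theta_0$ ``analogously matches the $\det$-type invariants,'' but there is nothing to match. A $\det$-type invariant for $SO_m$ in $(V^{\otimes 2n})^{SO_m}$ uses $m$ of the $2n$ tensor slots, while one for $SO_{m_0}$ uses $m_0>m$ slots; they are different in nature and there is no diagram-by-diagram correspondence under $\Theta_0$. Indeed, when $m\le 2n<m_0$ the target $\End_{SO_m(\mathbb C)}(V^{\otimes n})$ strictly contains the span of Brauer diagrams (it has the extra determinantal invariants) while the source $\End_{SO_{m_0}(\mathbb C)}(\widetilde V^{\otimes n})$ does not, so the Brauer-diagram compatibility alone cannot give surjectivity — your parenthetical ``this step uses only $m_0>m$'' cannot be right. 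Second, the transfer step: ``the cokernel is torsion ... one checks this cokernel vanishes'' is exactly the nontrivial content of the lemma, and you do not supply the check. Knowing that source and target are $\mathcal R$-free of ranks independent of the field (Lemma \ref{key2}) does not rule out a rank-drop of $\Theta_0$ modulo an odd prime $p$; one has to exhibit compatible bases on which $\Theta_0$ is visibly onto. The paper does not argue over $\mathbb C$ at all: it first replaces the groups by $\bU(\mathfrak g),\bU(\widetilde{\mathfrak g})$ via Lemma \ref{lm51}, then works directly over $\mathcal R$ with Lusztig's $\diamond$-construction and canonical bases, proving a combinatorial inclusion $J_0[(m_0-m)/2]\subseteq\widetilde J_0$ (transported from the symplectic case in \cite{DDH}); this shows $\iota^{\otimes 2n}$ carries $V_{\mathcal R}^{\otimes 2n}/M[\neq 0]_{\mathcal R}$ isomorphically onto an $\mathcal R$-direct summand of $\widetilde V_{\mathcal R}^{\otimes 2n}/\widetilde M[\neq 0]_{\mathcal R}$, so dually $\pi^{\otimes 2n}$ is a split surjection over $\mathcal R$, and surjectivity over every $K$ follows with no cokernel analysis. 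That integral splitting is the missing idea in your proposal.
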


\begin{proof} We first prove (1). By Lemma \ref{lm51}, it suffices
to show that the map $$ \Theta_0:
\End_{\bU_{K}(\widetilde{\mathfrak{g}})}\bigl(\widetilde{V}_K^{\otimes
n}\bigr)\rightarrow\End_{\bU_{K}(\mathfrak{g})}\bigl(V_K^{\otimes
n}\bigr)
$$
is surjective. The same argument used in \cite[Section 4]{DDH} still
works (except that we use a slightly different embedding
$\mathfrak{g}\hookrightarrow\widetilde{\mathfrak{g}}$ here). So we
shall give only a sketch here. Recall that for a module $M$ over a
Hopf algebra $H$, $M^{H}:=\{x\in
M|hx=\varepsilon_{H}x,\,\forall\,h\in H\}$. We have the following
commutative diagram
$$\begin{CD}
\End_{\bU_{K}(\widetilde{\mathfrak{g}})}\bigl(\widetilde{V}_K^{\otimes
n}\bigr)@>{\sim}>>\bigl(\widetilde{V}_K^{\otimes
2n}\bigr)^{\bU_{K}(\widetilde{\mathfrak{g}})}\\
@V{\Theta_0}VV @V{\pi^{\otimes 2n}}VV  \\
\End_{\bU_{K}(\mathfrak{g})}\bigl(V_K^{\otimes
n}\bigr)@>{\sim}>>\bigl(V_K^{\otimes 2n}\bigr)^{\bU_{K}(\mathfrak{g})}\\
\end{CD},
$$
where the two horizontal maps are natural isomorphisms. Therefore,
it suffices to show that $$ {\pi^{\otimes
2n}}\Bigl(\bigl(\widetilde{V}_K^{\otimes
2n}\bigr)^{\bU_{K}(\widetilde{\mathfrak{g}})}\Bigr)=\bigl(V_K^{\otimes
2n}\bigr)^{\bU_{K}(\mathfrak{g})}.
$$
Since (by the theory of tilting modules) all the maps and modules
are defined over $\mathcal{R}$, it suffices to prove the above
equality with $K$ replaced by $\mathcal{R}$. Let $\widetilde{M}[\neq
0]_{\mathcal{R}}, M[\neq 0]_{\mathcal{R}}, B[0], \widetilde{B}[0]$
be the notations for $\mathfrak{g}, \widetilde{\mathfrak{g}}$ which
is defined in a similar way as in \cite[Section 4]{DDH}. We have the
following commutative diagram.
$$\begin{CD}
\bigl(\widetilde{V}_{\mathcal{R}}^{\otimes
2n}\bigr)^{\bU_{\mathcal{R}}(\widetilde{\mathfrak{g}})}@>{\sim}>>
\biggl(\widetilde{V}_{\mathcal{R}}^{\otimes 2n}/\widetilde{M}[\neq
0]_{\mathcal{R}}\biggr)^{\ast}@>{}>>
\biggl(\widetilde{V}_{\mathcal{R}}^{\otimes
2n}\biggr)^{\ast}\\
@V{{\pi}^{\otimes 2n}}VV @. @V{\bigl({\iota}^{\otimes 2n}\bigr)^{\ast}}VV  \\
\bigl(V_{\mathcal{R}}^{\otimes
2n}\bigr)^{\bU_{\mathcal{R}}(\mathfrak{g})}@>{\sim}>>\biggl(V_{\mathcal{R}}^{\otimes
2n}/M[\neq 0]_{\mathcal{R}}\biggr)^{\ast}
@>{}>>\biggl(V_{\mathcal{R}}^{\otimes 2n}\biggr)^{\ast}\\
\end{CD}.
$$
Hence it suffices to show that the rightmost vertical map is
surjective.\smallskip

Let $$\begin{aligned} J_0&:=\Bigl\{(i_1,\cdots,i_{2n})\in
I(m,2n)\Bigm|w_{i_1}\diamond\cdots\diamond w_{i_{2n}}\in
B[0]\Bigr\},\\
\widetilde{J}_0&:=\Bigl\{(i_1,\cdots,i_{2n})\in
I(m_0,2n)\Bigm|\tilde{w}_{i_1}\tilde{\diamond}
\cdots\tilde{\diamond}
\tilde{w}_{i_{2n}}\in\widetilde{B}[0]\Bigr\},\end{aligned}
$$
where $\{w_1,\cdots,w_m\}$ (resp.,
$\{\tilde{w}_1,\cdots,\tilde{w}_m\}$) are canonical bases of
$V_{\mathcal{R}}$ (resp., of $\widetilde{V}_{\mathcal{R}}$),
``$\diamond$" is the notation in \cite[(27.3.2)]{Lu}, and
``$\tilde\diamond$" is the similar notation for
$\widetilde{\mathfrak{g}}$. As in \cite[Corollary 4.5]{DDH}, it is
clear that the set $$ \Bigl\{w_{i_1}\otimes\cdots\otimes
w_{i_{2n}}+M[{\neq 0}]_{\mathcal{R}}\Bigm|(i_1,\cdots,i_{2n})\in
J_0\Bigr\}
$$
forms an $\mathcal{R}$-basis of $V_{\mathcal{R}}^{\otimes
{2n}}/M[{\neq 0}]_{\mathcal{R}}$, and the set $$
\Bigl\{\tilde{w}_{i_1}\otimes\cdots\otimes
\tilde{w}_{i_{2n}}+\widetilde{M}[{\neq
0}]_{\mathcal{R}}\Bigm|(i_1,\cdots,i_{2n})\in \widetilde{J}_0\Bigr\}
$$
forms an $\mathcal{R}$-basis of
$\widetilde{V}_{\mathcal{R}}^{\otimes {2n}}/\widetilde{M}[{\neq
0}]_{\mathcal{R}}$. We set $$
J_0[(m_0-m)/2]:=\Bigl\{((m_0-m)/2+i_1,\cdots,(m_0-m)/2+i_{2n})\Bigm|(i_1,\cdots,i_{2n})\in
J_0\Bigr\}.
$$

Using the same argument as in the proof of \cite[Theorem 4.7]{DDH},
we can show\footnote{Note that we get a shift here because we have
used a slight different embedding
$\mathfrak{g}\hookrightarrow\widetilde{\mathfrak{g}}$.} that
$J_0[(m_0-m)/2]\subseteq\widetilde{J}_0$. This implies that
${\iota}^{\otimes 2n}$ maps $V_{\mathcal{R}}^{\otimes 2n}/M[\neq
0]_{\mathcal{R}}$ isomorphically onto an $\mathcal{R}$-direct
summand of $\widetilde{V}_{\mathcal{R}}^{\otimes
2n}/\widetilde{M}[\neq 0]_{\mathcal{R}}$. It follows that $$
\bigl({\iota}^{\otimes
2n}\bigr)^{\ast}\biggl(\bigl(\widetilde{V}_{\mathcal{R}}^{\otimes
2n}/\widetilde{M}[\neq
0]_{\mathcal{R}}\bigr)^{\ast}\biggr)=\bigl(V_{\mathcal{R}}^{\otimes
2n}/M[\neq 0]_{\mathcal{R}}\bigr)^{\ast}, $$ as required. This
proves (1).\smallskip

For (2), we note that if $m$ is odd, then $m_0$ is odd too (as
$m_0-m$ is even). In this case, since $\theta$ acts as a scalar ($1$
or $-1$) on $V_K^{\otimes n}$, it is readily seen that
$$\begin{aligned} \End_{KO_{m_0}(K)}\bigl(\widetilde{V}_K^{\otimes
n}\bigr)&=\End_{KSO_{m_0}(K)}\bigl(\widetilde{V}_K^{\otimes
n}\bigr),\\
\End_{KO_m(K)}\bigl(V_K^{\otimes
n}\bigr)&=\End_{KSO_m(K)}\bigl(V_K^{\otimes n}\bigr).
\end{aligned}
$$
Therefore, by (1), we know $\Theta_0$ is surjective in this case.
Now we assume that $m$ is even, then $m_0$ is also even. We have the
following commutative diagram. $$\begin{CD}
\End_{KSO_{m_0}(K)}\bigl(\widetilde{V}_K^{\otimes
n}\bigr)@>{\sim}>>\bigl(\widetilde{V}_K^{\otimes
2n}\bigr)^{SO_{m_0}(K)}\\
@V{\Theta_0}VV @V{\pi^{\otimes 2n}}VV  \\
\End_{KSO_m(K)}\bigl(V_K^{\otimes
n}\bigr)@>{\sim}>>\bigl(V_K^{\otimes 2n}\bigr)^{SO_m(K)}\\
\end{CD},
$$
which implies (by (1)) that
\begin{equation}\label{hj} \pi^{\otimes
2n}\Bigl(\bigl(\widetilde{V}_K^{\otimes
2n}\bigr)^{SO_{m_0}(K)}\Bigr)=\bigl(V_K^{\otimes
2n}\bigr)^{SO_m(K)}.
\end{equation}
Since $\theta$ normalizes $SO_m(K)$, $\theta$ must stabilize
$\bigl(V_K^{\otimes 2n}\bigr)^{SO_m(K)}$. As $\theta^2=1$ and $1\neq
-1$, it follows that $\theta$ acts semisimply on $\bigl(V_K^{\otimes
2n}\bigr)^{SO_m(K)}$ with two eigenvalues $1, -1$, and
$\bigl(V_K^{\otimes 2n}\bigr)^{O_m(K)}$ is nothing but its
eigenspace belonging to $1$. By (\ref{hj}), $\pi^{\otimes 2n}$ must
map the eigenspace of $\bigl(\widetilde{V}_K^{\otimes
2n}\bigr)^{SO_{m_0}(K)}$ belonging to $1$ surjectively onto the
eigenspace of $\bigl({V_K}^{\otimes 2n}\bigr)^{SO_m(K)}$ belonging
to $1$. In other words, $$ \pi^{\otimes
2n}\Bigl(\bigl(\widetilde{V}_K^{\otimes
2n}\bigr)^{O_{m_0}(K)}\Bigr)=\bigl(V_K^{\otimes 2n}\bigr)^{O_m(K)}.
$$
Now the surjectivity of $\Theta_0$ in this case follows directly
from the following commutative diagram.
$$\begin{CD}
\End_{KO_{m_0}(K)}\bigl(\widetilde{V}_K^{\otimes
n}\bigr)@>{\sim}>>\bigl(\widetilde{V}_K^{\otimes
2n}\bigr)^{O_{m_0}(K)}\\
@V{\Theta_0}VV @V{\pi^{\otimes 2n}}VV  \\
\End_{KO_m(K)}\bigl(V_K^{\otimes
n}\bigr)@>{\sim}>>\bigl(V_K^{\otimes 2n}\bigr)^{O_m(K)}\\
\end{CD}.
$$
This completes the proof of the lemma.\end{proof}

Recall that the orthogonal bilinear form on $V$ determines an
$O_m(K)$-isomorphism $V_K\cong V_K^{\ast}$. Therefore, there is an
isomorphism
\begin{equation} \End_{K}\bigl(V_K^{\otimes n}\bigr)\cong V_K^{\otimes
n}\otimes (V_K^{\otimes n})^{\ast}\cong \bigl(V_K^{\otimes
2n}\bigr)^{\ast},\label{cano}
\end{equation}
such that for any given $\ui=(i_1,\cdots,i_n),
\underline{j}=(j_1,\cdots,j_n)\in I(m,n)$, the map which sends
$v_{\underline{l}}:=v_{l_1}\otimes\cdots\otimes v_{l_n}$ to
$\delta_{\ui,\underline{l}}v_{\uj}$ corresponds to the linear
function $$\begin{aligned} v_{k_1}\otimes v_{k_2}
\otimes\cdots\otimes v_{k_{2n}}&\mapsto
\delta_{m+1-j_1,k_{2n}}\delta_{m+1-j_2,k_{2n-1}}\cdots\delta_{m+1-j_n,k_{n+1}}\\
&\qquad\qquad\qquad\qquad
\times\delta_{k_1,i_1}\delta_{k_2,i_2}\cdots\delta_{k_n,i_n},
\end{aligned}$$ for any $(k_1,\cdots,k_{2n})\in I(m,2n)$. The
symmetric group $\BS_{2n}$ acts on $V_K^{\otimes 2n}$ by place
permutation, hence also acts on $\bigl(V_K^{\otimes
2n}\bigr)^{\ast}\cong\End_{K}\bigl(V_K^{\otimes n}\bigr)$. Similar
results also hold for $\widetilde{V}_K$ (with $m$ replaced by
$m_0$).
\smallskip

Let $D\in\BD_n$. We can write $D=D_{\ui,\uj}$, where
$$\ui=(i_1,\cdots,i_n), \uj=(j_1,\cdots,j_n),$$ such that
$(i_1,j_1,i_2,j_2,\cdots,i_n,j_n)$ is a permutation of $(1,2,$
$3,\cdots,2n)$, and for each integer $1\leq s\leq n$, the vertex
labelled by $i_s$ is connected with the vertex labelled by $j_s$.

\begin{lem} {\rm (cf. \cite{DP}, \cite[Proposition 1.6]{Ga1}, \cite{LP})} \label{keydes}
With the notations as above and (\ref{cano}) in mind, for any
$w_1,\cdots,w_{2n}\in V_K$, we have that
$$ \varphi(D_{\ui,\uj})\bigl(w_1\otimes\cdots\otimes
w_{2n}\bigr)=\prod_{s=1}^{n}(w_{i_s}, w_{j_s}).
$$
Furthermore, $\varphi$ is a $\BS_{2n}$-module homomorphism. Similar
results also hold for $\widetilde{V}_K$.
\end{lem}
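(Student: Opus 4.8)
The plan is to write down the answer explicitly on the diagram basis, verify it on the algebra generators of $\bb_n(m)$, and then extend it to all diagrams using multiplicativity; the $\BS_{2n}$-equivariance will then be automatic. Define a linear map $\Psi\colon\bb_n(m)\to\bigl(V_K^{\otimes 2n}\bigr)^{\ast}$ on the basis of Brauer $n$-diagrams by $\Psi(D_{\ui,\uj})(w_1\otimes\cdots\otimes w_{2n})=\prod_{s=1}^{n}(w_{i_s},w_{j_s})$ for $w_1,\dots,w_{2n}\in V_K$. This is well defined, since the right-hand side is unchanged when one reorders the edges of $D_{\ui,\uj}$ or interchanges the two endpoints $i_s,j_s$ of an edge (the form $(\,,)$ being symmetric); and it is manifestly $\BS_{2n}$-equivariant, because permuting the tensor slots $w_1,\dots,w_{2n}$ has exactly the same effect as permuting the $2n$ vertices of the diagram, which is the $\BS_{2n}$-action on $\bb_n(m)$. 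Thus the entire lemma is reduced to the single assertion $\Psi=\varphi$, where $\varphi$ is regarded as a map into $\bigl(V_K^{\otimes 2n}\bigr)^{\ast}$ via \eqref{cano}.

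Next I would check $\Psi=\varphi$ on the generators $s_j$ and $e_j$. Realizing each of these as a Brauer diagram and reading off its edge set in the vertex labelling implicit in \eqref{cano} (top vertices carrying the labels $1,\dots,n$, bottom vertices the labels $2n,2n-1,\dots,n+1$), one compares, through \eqref{cano}, the functional $\Psi(s_j)$ with the place-permutation action of $s_j$, and $\Psi(e_j)$ with the contraction--expansion action of $e_j$ defined above Lemma \ref{B1}. Both comparisons are short direct computations: in the action of $e_j$ the factor $\delta_{i_j,i'_{j+1}}=(v_{i_j},v_{i_{j+1}})$ is the ``cap'' pairing among the input slots, $\sum_{k}v_k\otimes v_{k'}$ is the ``cup'', and \eqref{cano} turns the latter into the complementary pairing among the dual slots. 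In particular $\varphi$ and $\Psi$ agree on the subalgebra $K\BS_n$.

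To pass from the generators to an arbitrary diagram, I would use that $\varphi\colon\bb_n(m)^{\mathrm{op}}\to\End_K(V_K^{\otimes n})$ is an algebra homomorphism, so it suffices to check that $\Psi$ is compatible with diagram multiplication: if $D_1,D_2$ are diagrams and $D_1D_2=\delta^{c}\,(D_1\!\ast\!D_2)$ in $\bb_n(m)$, where $D_1\!\ast\!D_2$ is the concatenated matching and $c$ is the number of closed loops removed, then $\Psi(D_2)\,\Psi(D_1)=\delta^{c}\,\Psi(D_1\!\ast\!D_2)$ in $\End_K(V_K^{\otimes n})$. This is a direct tensor-contraction computation: composing the two pairing functionals glues the two matchings along the $n$ shared vertices, and each resulting closed loop contributes the scalar $\operatorname{tr}(\id_{V_K})=\dim_K V_K=m=\delta$, matching the relation $e_i^2=\delta e_i$. (One could also sidestep the loop-counting entirely by writing each diagram, via Corollary \ref{Enyangbasis2}, as a product of generators in which no closed loop is ever created.) With this in hand, $\varphi$ and $\Psi$ agree on the generators and on all their products, hence on the whole diagram basis, so $\Psi=\varphi$; the displayed formula follows, and the $\BS_{2n}$-equivariance of $\varphi$ is inherited from that of $\Psi$. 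The same argument applies verbatim to $\widetilde{V}_K$ with $m_0$ in place of $m$.

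The step I expect to be the main obstacle is the last one: one must keep careful track of the reversal and dualization built into \eqref{cano}, so that concatenation of diagrams genuinely corresponds to composition of the pairing functionals, with the correct powers of $\delta$ whenever loops appear. A tempting shortcut --- using that $\BS_{2n}$ acts transitively on the set of perfect matchings of $2n$ points, so that the formula would only need to be checked on a single diagram --- is hampered by the fact that the adjacent transposition $(n,n+1)$ interchanges a top vertex with a bottom vertex and is not realized by left or right multiplication by a single generator of $\bb_n(m)$; the generators-plus-multiplicativity route therefore seems the more robust one.
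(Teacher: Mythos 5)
Your proposal is correct but organizes the argument differently from the paper. The paper's proof picks a Brauer diagram $D$, writes it in Enyang's normal form $d_1^{-1}e_1 e_3 \cdots e_{2f-1}\sigma d_2$, traces the action of each factor on a basis tensor $v_{\ui}$ to describe $v_{\ui}D$ explicitly, and then translates that description into the pairing formula via \eqref{cano}; the $\BS_{2n}$-equivariance is read off only at the end, as a consequence. You instead promote the target formula to the primitive object: you define $\Psi$ by the pairing formula, observe that it is automatically $\BS_{2n}$-equivariant, verify $\Psi=\varphi$ on the generators $s_j$ and $e_j$, and then propagate to the whole diagram basis using the multiplicativity of $\Psi$ under diagram concatenation (closed loops contributing the scalar $\dim_K V_K=m=\delta$). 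The generator-level computations are the same in both arguments; the difference is that you isolate the assembly step as a separate structural fact about the diagram calculus, whereas the paper runs the assembly implicitly inside a single direct computation of $v_{\ui}D$. Your organization has the advantage that the $\BS_{2n}$-equivariance is transparent from the start rather than appearing as a by-product; the cost is that one must actually prove the gluing-and-loop-counting rule (keeping careful track of the reversal and dualization built into \eqref{cano}), which you correctly flag as the delicate step and which you can sidestep, as you note, by using Corollary \ref{Enyangbasis2} to express each basis element as a product of generators in which no closed loop is ever created. Your remark about why transitivity of $\BS_{2n}$ on perfect matchings does not furnish a one-diagram shortcut is also sound: that route presupposes the $\BS_{2n}$-equivariance of $\varphi$, which is exactly what is to be established.
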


\begin{proof} Let $d_1^{-1}e_1e_3\cdots e_{2f-1}\sigma
d_2$ be the basis element which corresponds to the Brauer diagram
$D$, where $f$ be an integer with $0\leq f\leq [n/2]$,
$\sigma\in\BS_{\{2f+1,\cdots,n\}}$ and $d_1,d_2\in\mathfrak{D}_{f}$,
$\nu_f:=\bigl((2^f), (n-2f)\bigr)\vdash n$. Then the top horizontal
edges of $D$ connect $(2i-1)d_1$ and $(2i)d_1$, the bottom
horizontal edges of $D$ connect $(2i-1)d_2$ and $(2i)d_2$, for
$i=1,2,\cdots,f$, and the vertical edges of $D$ connect $(j)d_1$ and
$(j)\sigma d_2$, for $j=2f+1,2f+2,\cdots,n$. \smallskip

Let $\ui\in I(m,n)$. By our definitions of $\varphi$ and the set
$\mathfrak{D}_{f}$, the action of
$$D=d_1^{-1}e_1e_3\cdots e_{2f-1}\sigma d_2$$ on $v_{\ui}$ can be
described as follows. Let $(a_1,b_1),\cdots,(a_f,b_f)$ be the set of
all the horizontal edges in the top row of $D$, where $a_s<b_s$ for
each $s$ and $a_1<a_2<\cdots<a_f$. Let $(c_1,d_1),\cdots,(c_f,d_f)$
be the set of all the horizontal edges in the bottom row of $D$,
where $c_s<d_s$ for each $s$ and $c_1<c_2<\cdots<c_f$. Then for each
integer $s$ with $1\leq s\leq f$, the $(c_s, d_s)$th position of
$v_{\ui}D$ is the following sum:
$$ \delta_{i_{a_s}, m+1-i_{b_s}}\sum_{k=1}^{m}(v_{k}\otimes v_{k'}).
$$
We list those vertices in the top row of $D$ which are not connected
with horizontal edges from left to right as
$i_{k_{2f+1}},i_{k_{2f+2}},\cdots,i_{k_{n}}$. Then, for each integer
$s$ with $2f+1\leq s\leq n$, the $(s\sigma d_2)$th position of
$v_{\ui}D$ is $v_{i_{k_{s}}}$. Now it is easy to verify directly
that $\varphi(D_{\ui,\uj})\bigl(w_1\otimes\cdots\otimes
w_{2n}\bigr)=\prod_{s=1}^{n}(w_{i_s}, w_{j_s})$, from which we see
immediately that $\varphi$ is a $\BS_{2n}$-module homomorphism.
\end{proof}

We define a linear isomorphism $\Theta_1$ from the $\bb_n(m_0)$ onto
$\bb_n(m)$ as follows: $$
\Theta_1\Bigl(\widetilde{d}_1^{-1}\widetilde{e}_1\widetilde{e}_3\cdots
\widetilde{e}_{2f-1}\widetilde{\sigma}\widetilde{d_2}\Bigr)={d_1}^{-1}e_1e_3\cdots
e_{2f-1}{\sigma}{d_2},
$$
for each $0\leq f\leq [n/2]$, $\lam\vdash n-2f$, $d_1,
d_2\in\mathfrak{D}_{f}$.

\begin{lem} \label{keycd} The following diagram of maps $$\begin{CD}
\bb_n(m_0)@>{\widetilde{\varphi}}>>\End_{KO_{m_0}(K)}\bigl(\widetilde{V}_K^{\otimes
n}\bigr)\\
@V{\Theta_1}VV @V{\Theta_0}VV  \\
\bb_n(m)@>{\varphi}>>\End_{KO_m(K)}\bigl(V_K^{\otimes
n}\bigr)\\
\end{CD}
$$
is commutative.
\end{lem}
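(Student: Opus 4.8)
The plan is to reduce the assertion to a direct computation on basis elements. Since $\Theta_0\circ\widetilde\varphi$ and $\varphi\circ\Theta_1$ are both $K$-linear, it suffices to check that they agree on the basis of $\bb_n(m_0)$ supplied by Corollary \ref{Enyangbasis2}. Under the bijection between that basis and the set $\BD_n$ of Brauer $n$-diagrams described in the proof of Lemma \ref{keydes} — a bijection given in terms of combinatorial data $(f,\sigma,d_1,d_2)$ with no reference to $m$ — the map $\Theta_1$ becomes simply the identity on $\BD_n$. Hence it is enough to show, for each $D\in\BD_n$,
\[
  \pi^{\otimes n}\circ\widetilde\varphi(D)\circ\iota^{\otimes n}=\varphi(D)\quad\text{as endomorphisms of }V_K^{\otimes n},
\]
and for this it is enough to evaluate both sides on an arbitrary standard basis tensor $v_{\underline{l}}=v_{l_1}\otimes\cdots\otimes v_{l_n}$ with $\underline{l}\in I(m,n)$.

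To carry this out I would invoke the explicit description of the action of a Brauer diagram on a basis tensor given in the proof of Lemma \ref{keydes} (valid over $V_K$ and over $\widetilde V_K$ alike): the top horizontal edges of $D$ contribute Kronecker-delta contraction conditions on pairs of input indices, the bottom horizontal edges insert the ``expansion'' element ($\sum_{k=1}^{m}v_k\otimes v_{k'}$ for $V_K$, respectively $\sum_{k=1}^{m_0}\widetilde v_k\otimes\widetilde v_{m_0+1-k}$ for $\widetilde V_K$) into the corresponding pair of output slots, and the remaining vertical edges permute the surviving basis vectors into place. Writing $c:=(m_0-m)/2$, so that $\iota(v_i)=\widetilde v_{i+c}$, $\pi(\widetilde v_j)=v_{j-c}$ for $c<j\le m+c$ and $\pi(\widetilde v_j)=0$ otherwise, and noting $m_0-2c=m$, the comparison then rests on three elementary compatibilities:
\begin{enumerate}
\item[(i)] $\iota^{\otimes n}(v_{\underline{l}})=\widetilde v_{l_1+c}\otimes\cdots\otimes\widetilde v_{l_n+c}$, and $(l_a+c)+(l_b+c)=m_0+1$ holds exactly when $l_a+l_b=m+1$; thus the contraction conditions produced by $\widetilde\varphi(D)$ acting on $\iota^{\otimes n}(v_{\underline{l}})$ are literally the same as those produced by $\varphi(D)$ acting on $v_{\underline{l}}$;
\item[(ii)] $(\pi\otimes\pi)\bigl(\sum_{k=1}^{m_0}\widetilde v_k\otimes\widetilde v_{m_0+1-k}\bigr)=\sum_{k=1}^{m}v_k\otimes v_{k'}$, because the $k$-th summand survives $\pi\otimes\pi$ precisely for $c<k\le m+c$, whereupon it becomes $v_{k-c}\otimes v_{m_0+1-k-c}$ with $(k-c)+(m_0+1-k-c)=m+1$;
\item[(iii)] $\pi$ returns each surviving vertical-edge entry $\widetilde v_{l_s+c}=\iota(v_{l_s})$ to $v_{l_s}$, since $c<l_s+c\le m+c$.
\end{enumerate}
Applying $\pi^{\otimes n}$ slot by slot to $\widetilde\varphi(D)\bigl(\iota^{\otimes n}(v_{\underline{l}})\bigr)$ and using (i)--(iii) then reproduces exactly the expression for $\varphi(D)(v_{\underline{l}})$ read off from the proof of Lemma \ref{keydes}; hence $\Theta_0\bigl(\widetilde\varphi(D)\bigr)=\varphi\bigl(\Theta_1(D)\bigr)$ and the diagram commutes.

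The step I expect to be the only real obstacle is the bookkeeping in (ii) and (iii): one must check carefully that the ``out-of-range'' summands annihilated by $\pi\otimes\pi$ are exactly the terms by which the $\widetilde V_K$-expansion element exceeds the $V_K$-expansion element, and that no nonzero vertical-edge entry is killed. This is precisely the point where the shifted embedding $v_i\mapsto\widetilde v_{i+(m_0-m)/2}$ used in Section 5 (rather than the naive $v_i\mapsto\widetilde v_i$) is essential, since it makes $\iota$ an isometry and makes $m_0-2c=m$, so that all the Kronecker deltas line up. An alternative route, slightly shorter but requiring an auxiliary adjointness statement, is to pass through the identification (\ref{cano}): there $\widetilde\varphi(D_{\ui,\uj})$ corresponds (Lemma \ref{keydes}) to the functional $w_1\otimes\cdots\otimes w_{2n}\mapsto\prod_{s=1}^{n}(w_{i_s},w_{j_s})$ on $\widetilde V_K^{\otimes 2n}$, the operator $\Theta_0$ corresponds to precomposition with $\iota^{\otimes 2n}$ (because $\pi^{\otimes n}$ is adjoint to $\iota^{\otimes n}$ with respect to the form-induced pairings), and since $\iota$ preserves the bilinear form this precomposition yields precisely the functional attached to $\varphi(D_{\ui,\uj})$.
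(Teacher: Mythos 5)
Your argument is correct, and it is worth noting that your primary route and your ``alternative route'' at the end are genuinely different proofs, only the second of which matches the paper. The paper's proof is a one-liner --- ``this follows directly from Lemma \ref{keydes}'' --- and what it has in mind is exactly your alternative: under the identification \eqref{cano}, $\varphi(D_{\ui,\uj})$ and $\widetilde\varphi(D_{\ui,\uj})$ become the functionals $w_1\otimes\cdots\otimes w_{2n}\mapsto\prod_s(w_{i_s},w_{j_s})$ on $V_K^{\otimes 2n}$ and $\widetilde V_K^{\otimes 2n}$ respectively, and since $\pi$ is the adjoint of $\iota$ with respect to the two forms, $\Theta_0$ is precomposition with $\iota^{\otimes 2n}$; because $\iota$ is an isometry this precomposition manifestly carries one functional to the other, and $\Theta_1$ is the identity at the level of diagrams by construction, so the square closes. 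Your primary route is a more elementary, hands-on verification: you unwind the action of $D$ on a basis tensor (contraction/expansion/permutation) and check, slot by slot, that $\pi^{\otimes n}$ and $\iota^{\otimes n}$ pass through each of these three operations, with the shifted embedding $v_i\mapsto\widetilde v_{i+(m_0-m)/2}$ making the Kronecker-delta bookkeeping in (i)--(iii) come out right; your identities in (i)--(iii) are correct. The trade-off is clear: the direct calculation is self-contained and makes the role of the shift completely transparent, at the cost of some index-chasing, while the functional/adjointness route the paper uses is shorter and conceptually cleaner but relies on the reader already having internalized \eqref{cano} and Lemma \ref{keydes}. Either is acceptable; if you want to match the paper's economy, lead with the precomposition argument and relegate the basis computation to a remark.
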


\begin{proof} This follows directly from Lemma \ref{keydes}.
\end{proof}
\bigskip

\noindent{\bf Proof of Part b) in Theorem \ref{mainthm1} in the case
$m<n$}: Since $m_0\geq n$, by the main result in last section, we
know that $\widetilde{\varphi}$ is surjective. Since $\Theta_1$ is a
linear isomorphism, and by Lemma \ref{lm52} $\Theta_0$ is also
surjective, the commutativity of the diagram in Lemma \ref{keycd}
immediately implies that $\varphi$ is also surjective. This
completes the proof of Part b) in Theorem \ref{mainthm1} in the case
$m<n$.

\bigskip\bigskip

\section{The $\BS_{2n}$-action on $\bb_n(x)$}
\medskip

In this section, we shall first introduce (cf. \cite{FG}) the right
sign permutation action of the symmetric group $\BS_{2n}$ on the set
$\BD_n$. Then we shall construct a new $\Z$-basis for the resulting
right $\BS_{2n}$-module, which yields filtrations of $\bb_n(x)$ by
right $\BS_{2n}$-modules. Certain submodules occurring in this
filtration will play a central role in the next section.
\smallskip

For any fixed-point-free involution $\sigma$ in the symmetric group
$\BS_{2n}$, the conjugate $w^{-1}\sigma w$ of $\sigma$ by
$w\in\BS_{2n}$ is still a fixed-point-free involution. Therefore, we
have a right action of the symmetric group $\BS_{2n}$ on the set of
fixed-point-free involutions in $\BS_{2n}$. Note that the set
$\BD_n$ of Brauer $n$-diagrams can be naturally identified with the
set of fixed-point-free involutions in $\BS_{2n}$. Hence we get (cf.
\cite{FG}, \cite{Hu2}) a right permutation action of the symmetric
group $\BS_{2n}$ on the set $\BD_n$ of Brauer $n$-diagrams. We use
$``\ast"$ to denote this right permutation action. Let $\bb_{n}$
denote the free $\Z$-module spanned by all the Brauer $n$-diagrams
in $\BD_n$. The right sign permutation action of $\BS_{2n}$ on
$\bb_n$ is defined by $$D\star w:=(-1)^{\ell(w)}D\ast w.$$

We shall adopt the following labelling of the vertices in each
Brauer diagram. Namely, for each Brauer $n$-diagram $D$, we shall
label the vertices in the top row of $D$ by integers
$1,2,3,\cdots,n$ from left to right, and label the vertices in the
bottom row of $D$ by integers $n+1,n+2,n+3,\cdots,2n$ from right to
left (see Figure 6.1 for an example for $n=5$). This way of
labelling is more suitable for studying the sign permutation action
from $\BS_{2n}$.

\medskip
\begin{center}
\scalebox{0.25}[0.25]{\includegraphics{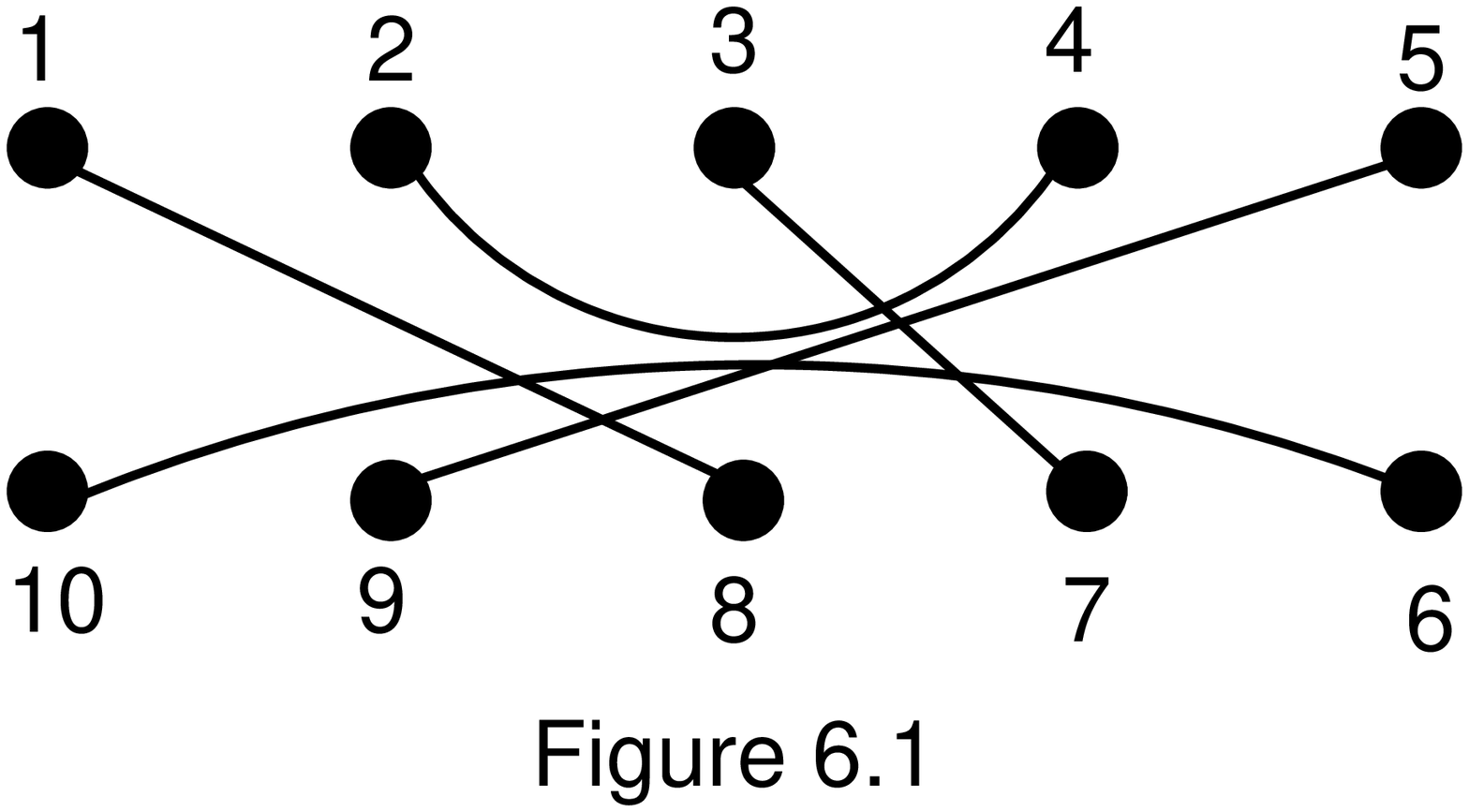}}
\end{center}
\medskip

For any commutative $\Z$-algebra $R$, we use $\bb_{n,R}$ to denote
the free $R$-module spanned by all the Brauer $n$-diagrams in
$\BD_n$. Then $\bb_{n,R}$ becomes a right $R[\BS_{2n}]$-module.
Clearly, there is a canonical isomorphism $\bb_{n,R}\cong
R\otimes_{\Z}\bb_n$, which is also a right $R[\BS_{2n}]$-module
isomorphism. Taking $R=\Z[x]$, we deduce that the Brauer algebra
$\bb_n(x)$ becomes a right $\Z[x][\BS_{2n}]$-module.\smallskip

For any $1\leq i\leq 2n$, we define $\gamma({i}):=2n+1-i$. Then
$\gamma$ is an involution on $\{1,2,\cdots,2n\}$. It is well-known
that the subgroup
$$ \bigl\{w\in\BS_{2n}\bigm|\text{$\bigl(\gamma(a)\bigr)w=\gamma(aw)$
  for any $1\leq a\leq 2n$}\bigr\}
$$
is isomorphic to the wreath product $\Z_2\!\wr\!\BS_n$ of $\Z_2$ and
$\BS_n$, which is a Weyl group of type $B_n$ (cf.
\cite{Hu1}).\smallskip

For any commutative $\Z$-algebra $R$, let $1_{R}$ denote the rank
one trivial representation of $R[\Z_2\wr\BS_n]$. By \cite[Lemma
2.1]{Hu2}, under the right permutation action $\ast$, there is a
right $R[\BS_{2n}]$-module isomorphism $$
\bb_{n,R}\cong\Ind_{R[\Z_2\wr\BS_n]}^{R[\BS_{2n}]}1_{R}.
$$
Let $\SGN_{R}$ be the rank one sign representation of $\BS_{2n}$.

\begin{lem}
Let $\BS_{2n}$ act on $\bb_{n,R}$ via the right sign permutation
action ``\,\,$\star$" . Then there is a right $R[\BS_{2n}]$-module
isomorphism $$
\bb_{n,R}\cong\SGN_{R}\otimes\Ind_{R[\Z_2\wr\BS_n]}^{R[\BS_{2n}]}1_{R}.
$$
\end{lem}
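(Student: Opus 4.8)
The plan is to reduce the statement immediately to \cite[Lemma 2.1]{Hu2} by observing that passing from the permutation action $\ast$ to the sign permutation action $\star$ is nothing but the operation of twisting by the one-dimensional sign representation $\SGN_R$.

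First I would record the elementary module-theoretic fact that underlies everything: if $(M,\cdot)$ is any right $R[\BS_{2n}]$-module and one defines a new right action $\bullet$ on the same underlying $R$-module by $m\bullet w:=(-1)^{\ell(w)}(m\cdot w)$, then $(M,\bullet)\cong\SGN_R\otimes_R(M,\cdot)$ as right $R[\BS_{2n}]$-modules, with the isomorphism given by $m\mapsto\varepsilon\otimes m$ for a fixed $R$-module generator $\varepsilon$ of $\SGN_R$. Indeed, on the right-hand side $(\varepsilon\otimes m)w=(\varepsilon w)\otimes(m\cdot w)=(-1)^{\ell(w)}\bigl(\varepsilon\otimes(m\cdot w)\bigr)$, which matches $\varepsilon\otimes(m\bullet w)$; and the map is visibly an $R$-module isomorphism since $\SGN_R$ is free of rank one. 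Here $(-1)^{\ell(w)}$ makes sense in any commutative ring and $\ell$ is the usual Coxeter length on $\BS_{2n}$, so nothing about $R$ intervenes.

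Next I would apply this with $M=\bb_{n,R}$ equipped with the permutation action $\ast$, so that the twisted action $\bullet$ is precisely the sign permutation action $\star$ defined by $D\star w=(-1)^{\ell(w)}D\ast w$. This gives $(\bb_{n,R},\star)\cong\SGN_R\otimes_R(\bb_{n,R},\ast)$. Finally, substituting the isomorphism $(\bb_{n,R},\ast)\cong\Ind_{R[\Z_2\wr\BS_n]}^{R[\BS_{2n}]}1_R$ from \cite[Lemma 2.1]{Hu2}\,---\,valid via the identification of $\BD_n$ with the set of fixed-point-free involutions in $\BS_{2n}$ and of the stabiliser of the involution $\gamma$ with $\Z_2\wr\BS_n$\,---\,and using that $\SGN_R\otimes_R(-)$ is a functor on right $R[\BS_{2n}]$-modules, one obtains the asserted isomorphism $\bb_{n,R}\cong\SGN_R\otimes\Ind_{R[\Z_2\wr\BS_n]}^{R[\BS_{2n}]}1_R$.

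There is essentially no hard step: the real content lies entirely in the already-cited \cite[Lemma 2.1]{Hu2}, and the rest is bookkeeping with the defining relation $D\star w=(-1)^{\ell(w)}D\ast w$. If anything needs care it is only the observation that the sign character and the length function are defined integrally, so that the argument is genuinely uniform in the commutative ground ring $R$; one could also remark, using the tensor identity, that the right-hand side equals $\Ind_{R[\Z_2\wr\BS_n]}^{R[\BS_{2n}]}\bigl(\Res^{\BS_{2n}}_{\Z_2\wr\BS_n}\SGN_R\bigr)$, though this reformulation is not needed for the statement.
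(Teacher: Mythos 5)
Your argument is correct and coincides with the paper's implicit reasoning: the paper states this lemma without proof, treating it as an immediate consequence of the preceding citation of \cite[Lemma 2.1]{Hu2} (giving $(\bb_{n,R},\ast)\cong\Ind_{R[\Z_2\wr\BS_n]}^{R[\BS_{2n}]}1_{R}$) together with the observation that passing from $\ast$ to $\star$ is exactly the twist by $\SGN_R$. You have simply made the bookkeeping explicit, which is the only thing there was to do.
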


Let $k$ be a positive integer. A sequence of nonnegative integers
$\lam=(\lam_1,\lam_2,\cdots)$ is said to be a composition of $k$
(denoted by $\lam\vDash k$) if $\sum_{i\geq 1}\lam_i=k$. A composition
$\lam=(\lam_1,\lam_2,\cdots)$ is said to be a partition of $k$
(denoted by $\lam\vdash k$) if $\lam_1\geq\lam_2\geq\cdots$.  The
conjugate of $\lam$ is defined to be a partition
$\lam'=(\lam'_1,\lam'_2,\cdots)$, where $\lam'_j:=\#\{i|\lam_i\geq
j\}$ for $j=1,2,\cdots$. Denote by $\mathcal{P}_n$ the set of
partitions of $n$.  For any partition $\mu$ of $2n$, denote by
$S^{\mu}$ the associated Specht module over $\BS_{2n}$. The notion of
Specht modules we use here is the same as that introduced in
\cite{Mu}.  In particular, $S^{(2n)}$ is the one-dimensional trivial
representation of $\BS_{2n}$, while $S^{(1^{2n})}$ is the one
dimensional sign representation of $\BS_{2n}$. For any commutative $\Z$-algebra
$R$, we write $S_{R}^{\mu}:=R\otimes_{\Z}S^{\mu}$.  Then
$\bigl\{S_{\Q}^{\mu}\bigm|\mu\vdash 2n\bigr\}$ is a complete set of
pairwise non-isomorphic simple $\Q[\BS_{2n}]$-modules.
\smallskip

For any composition $\lam=(\lam_1,\cdots,\lam_s)$ of $n$, let
$2\lam:=(2\lam_1,\cdots,2\lam_s)$, which is a composition of $2n$.
We define $2\PP_n:=\bigl\{2\lam\bigm|\lam\in\PP_n\bigr\}$.

\begin{lem} \label{lm22}
Let $\BS_{2n}$ act on $\bb_{n,\Q}$ via the right sign permutation
action ``\,\,$\star$" . Then there is a right $\Q[\BS_{2n}]$-module
isomorphism $$ \bb_{n,\Q}\cong\bigoplus_{\lam\in
2\PP_n}S_{\Q}^{\lam'}.
$$
\end{lem}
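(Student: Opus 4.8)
The plan is to feed the preceding lemma into two classical facts about $\Q[\BS_{2n}]$-modules. By the Lemma immediately above, under the sign permutation action ``$\star$'' we have an isomorphism of right $\Q[\BS_{2n}]$-modules
$$\bb_{n,\Q}\;\cong\;\SGN_{\Q}\otimes\Ind_{\Q[\Z_2\wr\BS_n]}^{\Q[\BS_{2n}]}1_{\Q},$$
so it suffices to decompose the induced module $\Ind_{\Q[\Z_2\wr\BS_n]}^{\Q[\BS_{2n}]}1_{\Q}$ into Specht modules and then twist by the one-dimensional sign representation.

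First I would decompose $\Ind_{\Q[\Z_2\wr\BS_n]}^{\Q[\BS_{2n}]}1_{\Q}$. Identifying $\Z_2\wr\BS_n$ with the stabilizer $\BS_2\wr\BS_n$ of a fixed perfect matching of $\{1,\dots,2n\}$, this module is the permutation module of $\BS_{2n}$ on the set of fixed-point-free involutions, whose Frobenius characteristic is the plethysm $h_n[h_2]$. Invoking the classical identity $h_n[h_2]=\sum_{\mu\vdash n}s_{2\mu}$ (equivalently, that $(\BS_{2n},\Z_2\wr\BS_n)$ is a Gelfand pair whose spherical constituents are the modules $S^{2\mu}$, $\mu\vdash n$), this gives
$$\Ind_{\Q[\Z_2\wr\BS_n]}^{\Q[\BS_{2n}]}1_{\Q}\;\cong\;\bigoplus_{\mu\vdash n}S_{\Q}^{2\mu}.$$
Because we are over $\Q$ (the semisimple case), this is purely a statement about ordinary characters; it may be quoted from the literature, or proved directly by counting the matchings fixed by a permutation of each cycle type and comparing with $\sum_{\mu\vdash n}\chi^{2\mu}$.

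It then remains to apply the standard isomorphism $\SGN_{\Q}\otimes S_{\Q}^{\lam}\cong S_{\Q}^{\lam'}$, valid for all $\lam\vdash 2n$. Tensoring the displayed decomposition with $\SGN_{\Q}$ yields
$$\bb_{n,\Q}\;\cong\;\bigoplus_{\mu\vdash n}\bigl(\SGN_{\Q}\otimes S_{\Q}^{2\mu}\bigr)\;\cong\;\bigoplus_{\mu\vdash n}S_{\Q}^{(2\mu)'}\;=\;\bigoplus_{\lam\in 2\PP_n}S_{\Q}^{\lam'},$$
the last equality being the substitution $\lam=2\mu$ as $\mu$ ranges over $\PP_n$. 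As a sanity check, both sides have dimension $\#\BD_n=(2n-1)!!$, since $\sum_{\mu\vdash n}\dim S^{2\mu}=(2n-1)!!$ and twisting by $\SGN_{\Q}$ preserves dimensions. I expect the only real obstacle to be the decomposition of $\Ind_{\Q[\Z_2\wr\BS_n]}^{\Q[\BS_{2n}]}1_{\Q}$: the identity $h_n[h_2]=\sum_{\mu\vdash n}s_{2\mu}$ is classical, but in a self-contained account one must either carry out the plethysm expansion or verify the Gelfand-pair statement, and this is where the content of the lemma lies.
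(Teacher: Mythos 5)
Your argument is correct and is essentially the same as the paper's: the paper's proof consists of citing Macdonald, Chapter VII, (2.4), which is precisely the Gelfand-pair/plethysm fact $\Ind_{\Z_2\wr\BS_n}^{\BS_{2n}}1\cong\bigoplus_{\mu\vdash n}S^{2\mu}$ that you invoke, followed by the same sign twist. You have merely spelled out the details that the paper leaves to the references.
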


\begin{proof}
This follows from \cite[Chapter VII, (2.4)]{Mac} (see \cite[Lemma 2.2]{Hu2}).
\end{proof}

For any non-negative integers $a,b$ with $a+b\leq n$, we denote by
$\BD_{(b)}^{(a)}$ the set of all the Brauer diagrams in $\BD_n$
which satisfy the following two conditions:

1) for any integer $i$ with $i\leq a$ or $i>a+b$, the vertex
labelled by $i$ is connected with the vertex labelled by
$\gamma(i)$;

2) for any integer $i$ with $a<i\leq a+b$, the vertex labelled by
$i$ is connected with the vertex labelled by $\gamma(j)$ for some
integer $j$ with $a<j\leq a+b$.

Note that one can naturally identify any Brauer diagram $D$ in
$\BD_{(b)}^{(a)}$ with an element $w(D)$ in
$\BS_{(a+1,a+2,\cdots,a+b)}$. Thus we can attach a sign
$\epsilon(D):=(-1)^{\ell(w(D))}$ to each Brauer diagram in
$\BD_{(b)}^{(a)}$, where $\ell(?)$ is the usual length function. We
define $$
 Y_{(b)}^{(a)}:=\sum_{D\in\BD_{(b)}^{(a)}}\epsilon(D)D
$$

For each integer $k$ with $0\leq k\leq n$, we set
$Y_{(k)}:=Y_{(k)}^{(0)}$.\smallskip

\begin{dfn} Let $\lam=(\lam_1,\cdots,\lam_{s})$ be a partition of $n$.
We define $$
Y_{\lam}:=Y_{(\lam_1)}^{(0)}Y_{(\lam_2)}^{(\lam_1)}\cdots
Y_{(\lam_{s})}^{(\lam_1+\lam_2+\cdots+\lam_{s-1})}\in\bb_n.
$$
\end{dfn}

We identify $\BS_n$ with the set of Brauer $n$-diagrams in $\bb_n$
which contain no horizontal edges. Let $\BS_{\lambda}$ be its
Young subgroup corresponding to $\lam$. Then we actually
have $Y_{\lam}\in\BS_{\lam}\subset\BS_n\subset\bb_n$. For any
positive integer $k$ and any composition
$\mu=(\mu_1,\cdots,\mu_s)$ of $k$, the Young diagram of $\mu$ is
defined to be the set $[\mu]:=\{(a,b)|1\leq a\leq s, 1\leq
b\leq\mu_a\}$. The elements of $[\mu]$ are called nodes of $\mu$.
A $\mu$-tableau $\ft$ is defined to be a bijective map from the
Young diagram $[\mu]$ onto the set $\{1,2,\cdots,k\}$. For each
integer $a$ with $1\leq a\leq k$, we define $\res_{\ft}(a)=j-i$ if
$\ft(i,j)=a$. We denote by $\ft^{\mu}$ the $\mu$-tableau in which
the numbers $1,2,\cdots,k$ appear in order along successive rows. We denote by $\ft_{\mu}$ the
$\mu$-tableau in which the numbers $1,2,\cdots,k$ appear in order
along successive columns. The row stabilizer of $\ft^{\mu}$, denoted by $\BS_{\mu}$, is the
Young subgroup of $\BS_k$ corresponding to $\mu$. We
define $$ x_{\mu}=\sum_{w\in\BS_{\mu}}w, \quad
y_{\mu}=\sum_{w\in\BS_{\mu}}(-1)^{\ell(w)}w.
$$
Let $w_{\mu}\in\BS_{k}$ be such that
$\ft^{\mu}w_{\mu}=\ft_{\mu}$.  For example, if
$k=8, \mu=(3,3,1,1)$, then $$ \ft^{\mu}=\begin{matrix} 1& 2& 3\\
4& 5& 6\\
7& &\\
8& &
\end{matrix},\,\,\,\,\,
\ft_{\mu}=\begin{matrix} 1& 5& 7\\
2& 6& 8\\
3& &\\
4& &
\end{matrix},\,\,\,\,\,w_{\mu}=(2,5,6,8,4)(3,7).
$$
For any partition $\lam=(\lam_1,\lam_2,\cdots,\lam_{s})$ of $n$,
we define
$$
\widehat{\lam}:=(\lam_1,\lam_2,\cdots,\lam_s,\lam_s,\cdots,\lam_2,\lam_1),
$$
which is a composition of $2n$. Let $\BS_{\widehat{\lam}}$ be the
Young subgroup of $\BS_{2n}$ corresponding to
$\widehat{\lam}$.

\begin{lem} \label{cor24}
Let $\lam=(\lam_1,\cdots,\lam_{s})$ be a partition of
$n$. Then for any $w\in\BS_{\widehat{\lam}}$, we have that
$$ Y_{\lam}\star w=Y_{\lam}.
$$
\end{lem}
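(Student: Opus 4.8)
The plan is to reduce the statement to the elementary fact that the antisymmetriser $y_\lambda=\sum_{\sigma\in\BS_\lambda}\operatorname{sgn}(\sigma)\,\sigma$ of a Young subgroup is fixed, up to the sign character, by left and right translation by $\BS_\lambda$. The first step is to identify $Y_\lambda$ explicitly inside $\Z\BS_n\subseteq\bb_n$. Unwinding the definitions, every diagram occurring in the factor $Y^{(a)}_{(\lambda_i)}$, where $a=\lambda_1+\cdots+\lambda_{i-1}$, has no horizontal edge: outside the block $B_i:=\{a+1,\dots,a+\lambda_i\}$ and its mirror image $\gamma(B_i)$ the diagram consists of the vertical strands prescribed by $\gamma$, while on $B_i$ it is the diagram of a permutation of $B_i$ whose sign equals $\epsilon(D)$, by the definitions of $w(D)$ and $\epsilon(D)$. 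Hence, under the identification of $\BS_n$ with the horizontal-edge-free Brauer $n$-diagrams (write $D_\sigma$ for the diagram attached to $\sigma\in\BS_n$), the factor $Y^{(a)}_{(\lambda_i)}$ is the antisymmetriser of $\BS_{B_i}$; since $B_1,\dots,B_s$ are pairwise disjoint, the product of these antisymmetrisers is $Y_\lambda=\sum_{\sigma\in\BS_\lambda}\operatorname{sgn}(\sigma)\,D_\sigma=y_\lambda\in\Z\BS_\lambda$, which is the assertion made just before the lemma.

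Next I would determine how $\BS_{\widehat\lambda}$ acts under $\ast$ on permutation diagrams. The subgroup $\BS_{\widehat\lambda}$ is a direct product of a ``top'' factor permuting $\{1,\dots,n\}$ with blocks $B_1,\dots,B_s$ --- this is just $\BS_\lambda$ acting on $\{1,\dots,n\}$ --- and a ``bottom'' factor permuting $\{n+1,\dots,2n\}$ with blocks $\gamma(B_1),\dots,\gamma(B_s)$; conjugation by the order-reversing involution $\gamma$ carries the bottom factor isomorphically onto $\BS_\lambda$. Identifying $D_\sigma$ with the fixed-point-free involution $\sigma\gamma\sigma^{-1}\in\BS_{2n}$ and computing the conjugate $w^{-1}(\sigma\gamma\sigma^{-1})w$ for $w=u\bar v$, with $u$ in the top factor and $\bar v$ in the bottom factor, a direct calculation gives
\[
  D_\sigma\ast w \;=\; D_{u^{-1}\sigma\hat v},\qquad \hat v:=\gamma\bar v\gamma\in\BS_\lambda .
\]
In other words, the $\ast$-action of $\BS_{\widehat\lambda}$ on $\Z\BS_n\subseteq\bb_n$ is exactly the two-sided translation action of $\BS_\lambda\times\BS_\lambda$ (the precise placement of the inverses being immaterial for what follows).

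Combining the two steps and using $D_\sigma\star w=(-1)^{\ell(w)}(D_\sigma\ast w)=\operatorname{sgn}(w)\,D_{u^{-1}\sigma\hat v}$, we get
\[
  Y_\lambda\star w \;=\; \operatorname{sgn}(w)\sum_{\sigma\in\BS_\lambda}\operatorname{sgn}(\sigma)\,D_{u^{-1}\sigma\hat v}
  \;=\;\operatorname{sgn}(w)\operatorname{sgn}(u)\operatorname{sgn}(\hat v)\sum_{\tau\in\BS_\lambda}\operatorname{sgn}(\tau)\,D_\tau ,
\]
the second equality coming from the substitution $\tau=u^{-1}\sigma\hat v$, which runs over $\BS_\lambda$ because $u,\hat v\in\BS_\lambda$. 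Since $\operatorname{sgn}$ is a character and $\operatorname{sgn}(\hat v)=\operatorname{sgn}(\gamma\bar v\gamma)=\operatorname{sgn}(\bar v)$, the scalar equals $\operatorname{sgn}(u\bar v)\operatorname{sgn}(u)\operatorname{sgn}(\bar v)=\operatorname{sgn}(u)^2\operatorname{sgn}(\bar v)^2=1$, and therefore $Y_\lambda\star w=Y_\lambda$.

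I expect the only real work to lie in the second step: the formula for $D_\sigma\ast w$, and in particular the appearance of the $\gamma$-conjugate $\hat v$ rather than $\bar v$ itself, depends delicately on the deliberately non-standard convention of numbering the bottom row of a Brauer diagram from right to left, so this bookkeeping must be done with care. The identification of $Y_\lambda$ with $y_\lambda$ in the first step is a routine, if slightly fiddly, unpacking of the definitions of $\BD^{(a)}_{(b)}$, $w(D)$, and $\epsilon(D)$, and the final step is purely formal.
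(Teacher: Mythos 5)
Your proof is correct and is essentially the same approach as the paper's, which simply observes that the commuting factors $Y_{(\lam_i)}^{(\lam_1+\cdots+\lam_{i-1})}$ make $Y_\lam$ the antisymmetriser $y_\lam$ of $\BS_\lam$ and then declares the invariance ``follows directly from definition.'' Your write-up fills in what the paper compresses: the explicit identification $Y_\lam=y_\lam$, the computation that the $\ast$-action of $\BS_{\widehat\lam}$ on permutation diagrams is the two-sided translation action of $\BS_\lam\times\BS_\lam$ (with the $\gamma$-conjugate appearing because of the right-to-left labelling of the bottom row), and the cancellation of the sign $(-1)^{\ell(w)}$ against $\operatorname{sgn}(u)\operatorname{sgn}(\hat v)$ after reindexing.
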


\begin{proof}
Since the elements $Y_{(\lam_1)}^{(0)}, Y_{(\lam_2)}^{(\lam_1)}, \cdots,
Y_{(\lam_{s})}^{(\lam_1+\lam_2+\cdots+\lam_{s-1})}$ pairwise
commute with one another, the lemma follows directly from
definition.
\end{proof}

Let $k$ be a positive integer and $\mu$ be a composition of $k$.
Recall that a $\mu$-tableau $\ft$ is called {\it row standard} if
the numbers increase along rows. We use $\RS(\mu)$ to denote the
set of all the row-standard $\mu$-tableaux. Suppose $\mu$ is a
partition of $k$. Then $\ft$ is called {\it column standard} if
the numbers increase down columns, and {\it standard} if it is
both row and column standard. We use $\Std(\mu)$ to denote the set of
all the standard $\mu$-tableaux.

Note that every partition in the set $(2\PP_n)'$ is of the form
$$\widetilde{\nu}:=(\nu_1,\nu_1,\nu_2,\nu_2,\cdots,\nu_s,\nu_s),$$ where
$\nu:=(\nu_1,\nu_2,\cdots,\nu_s)$ is a partition of $n$. Now let
$\nu=(\nu_1,\cdots,\nu_{s})$ be a partition of $n$. For any
$\ft\in\RS(\widetilde{\nu})$, let $d(\ft)\in\BS_{2n}$ be such that
$\ft^{\widetilde{\nu}}d(\ft)=\ft$. Let $Y_{\nu,\ft}:=Y_{\nu}\star
d(\ft)$. For any commutative $\Z$-algebra $R$, we define $$
\CM^{\lam}_{R}:=\text{$R$-$\Span$}\Bigl\{Y_{\nu,\ft}\Bigm|\ft\in\Std(\widetilde{\nu}),\lam\unlhd\nu\in
\PP_n\Bigr\}.
$$
We write $\CM^{\lam}=\CM_{\Z}^{\lam}$. We are interested in the
module $\CM_{R}^{\lam}$. In the remaining part of this paper, we
shall see that this module is actually a right $\BS_{2n}$-submodule
of $\bb_{n,R}$ (with respect to the action ``$\star$"), and it
shares many properties with the permutation module
$x_{\widehat{\lam}}\Z[\BS_{2n}]$. In particular, it also has a
Specht filtration, and it is stable under base change, i.e.,
$R\otimes_{\Z}\CM^{\lam}\cong \CM_{R}^{\lam}$ for any commutative
$\Z$-algebra $R$.\smallskip

For our purpose, we need to recall some results in \cite{Mu} and
\cite{Ma} on the Specht filtrations of permutation modules over the
symmetric group $\BS_{2n}$. Let $\lam, \mu$ be two partitions of
$2n$. A $\mu$-tableau of type $\lam$ is a map
$\fS:[\mu]\rightarrow\{1,2,\cdots,2n\}$ such that each $i$ appears
exactly $\lam_i$ times. $\fS$ is said to be semistandard if each row
of $\fS$ is nondecreasing and each column of $\fS$ is strictly
increasing. Let $\T_0(\mu,\lam)$ be the set of all the semistandard
$\mu$-tableaux of type $\lam$. Then $\T_0(\mu,\lam)\neq\emptyset$
only if $\mu\unrhd \lam$, where ``$\unrhd$'' is the dominance order
as defined in \cite{Mu}. For each standard $\mu$-tableau $\fs$, let
$\lambda(\fs)$ be the tableau which is obtained from $\fs$ by
replacing each entry $i$ in $\fs$ by $r$ if $i$ appear in row $r$ of
$\ft^{\lam}$. Then $\lambda(\fs)$ is a $\mu$-tableau of type $\lam$.

For each standard $\mu$-tableau $\ft$ and each semistandard
$\mu$-tableau $\fS$ of type $\lam$, we define $$
x_{\fS,\ft}:=\sum_{\fs\in\Std(\mu),\lambda(\fs)=\fS}d(\fs)^{-1}x_{\mu}d(\ft).
$$
Then by \cite[Section 7]{Mu}, the set $$
\Bigl\{x_{\fS,\ft}\Bigm|\fS\in\T_0(\mu,\lam), \ft\in\Std(\mu), \lam\unlhd\mu\vdash 2n\Bigr\}
$$
form a $\Z$-basis of $x_{\lam}\Z[\BS_{2n}]$. Furthermore, for any
commutative $\Z$-algebra $R$, the canonical surjective homomorphism
$R\otimes_{\Z}x_{\lam}\Z[\BS_{2n}]\twoheadrightarrow
x_{\lam}R[\BS_{2n}]$ is an isomorphism.

For each partition $\mu$ of $2n$ and for each semistandard
$\mu$-tableau $\fS$ of type $\lam$, according to the results in
\cite[Section 7]{Mu} and \cite{Ma}, both the following
$\Z$-submodules $$\begin{aligned}
M^{\lam}_{\fS}:&=\text{$\Z$-$\Span$}\Bigl(\bigl\{x_{\fS,\fs}\bigm|\fs\in\Std(\mu)\bigr\}\bigcup\\
&\qquad\qquad\qquad
\bigl\{x_{\fT,\ft}\bigm|\fT\in\T_0(\nu,\lam),\ft\in\Std(\nu),\mu\lhd\nu\vdash 2n\bigr\}\Bigr),\\
M^{\lam}_{\fS,\rhd}:&=\text{$\Z$-$\Span$}\Bigl\{x_{\fT,\ft}\Bigm|\fT\in\T_0(\nu,\lam),
\ft\in\Std(\nu),\mu\lhd\nu\vdash 2n\Bigr\},
\end{aligned}
$$
are $\Z[\BS_{2n}]$-submodules, and the quotient of $M^{\lam}_{\fS}$
by $M^{\lam}_{\fS,\rhd}$ is canonical isomorphic to $S^{\mu}$ so
that the images of the elements $x_{\fS,\fs}$, where
$\fs\in\Std(\mu)$, form the standard $\Z$-basis of $S^{\mu}$. In
this way, it gives rise to a Specht filtration of
$x_{\lam}\Z[\BS_{2n}]$.  Each semistandard $\mu$-tableau of type
$\lam$ yields a factor which is isomorphic to $S^{\mu}$ so that
$x_{\lam}\Z[\BS_{2n}]$ has a series of factors, ordered by $\unlhd$,
each isomorphic to some $S^{\mu}$, $\mu\unrhd \lam$; moreover, the
multiplicity of $S^{\mu}$ is the number of semistandard
$\mu$-tableaux of type $\lam$.

Let $\lam=(\lam_1,\cdots,\lam_{s})$ be a partition of $n$, where
$\lam_s>0$. We write
$$\widehat{\lam}=(a_1^{k_1},a_2^{k_2},\cdots,a_{s'}^{k_{s'}},a_{s'}^{k_{s'}},\cdots,a_2^{k_2},a_1^{k_1}),$$
where $a_1>a_2>\cdots>a_{s'}$, $k_i\in\mathbb{N}$ for each $i$,
$a_i^{k_i}$ means that $a_i$ repeats $k_i$ times. Let
$\widehat{\lam}'$ be the conjugate of $\widehat{\lam}$. Let
$\widetilde{\BS}_{\widehat{\lam}}$ be the subgroup of
$\BS_{\widehat{\lam}'}$ consisting of all the elements $w$
satisfying the following condition: for any integers $1\leq i, j\leq
s$ with $\widehat{\lam}_i=\widehat{\lam}_j$, and any integers $a,b$
with $1\leq a, b\leq\widehat{\lam}_i$, $$
\begin{aligned}
&\qquad\,\,(\ft_{\widehat{\lam}}(i,a))w=\ft_{\widehat{\lam}}(j,a)\\
&\Leftrightarrow\quad
\ft_{\widehat{\lam}}(i,b))w=\ft_{\widehat{\lam}}(j,b)\\
&\Leftrightarrow\quad
\ft_{\widehat{\lam}}(2s+1-i,a))w=\ft_{\widehat{\lam}}(2s+1-j,a)\\
&\Leftrightarrow\quad\ft_{\widehat{\lam}}(2s+1-i,b))w=\ft_{\widehat{\lam}}(2s+1-j,b).
\end{aligned}
$$

Let $\widetilde{D}_{\widehat{\lam}}$ be a complete set of right
coset representatives of $\widetilde{\BS}_{\widehat{\lam}}$ in
$\BS_{\widehat{\lam}'}$.

\begin{lem} \label{lm26} Let $\lam=(\lam_1,\cdots,\lam_{s})$ be a partition of $n$, where $\lam_s>0$. We keep
the notations as above. Let
$$ n_{\widehat{\lam}}:=\prod_{i=1}^{s'}2^{k_i}(k_i!),\quad
h_{\widehat{\lam}}:=\sum_{w\in\widetilde{D}_{\widehat{\lam}}}(-1)^{\ell(w)}w.
$$
Then $$ Y_{\lam}\star
\bigl(w_{\widehat{\lam}}y_{\widehat{\lam}'}\bigr)=n_{\widehat{\lam}}\bigl(Y_{\lam}\star
(w_{\widehat{\lam}}h_{\widehat{\lam}})\bigr),
$$
and for any commutative $\Z$-algebra $R$,
$1_{R}\otimes_{\Z}(Y_{\lam}\star
(w_{\widehat{\lam}}h_{\widehat{\lam}}))\neq 0$ in $\bb_{n,R}$.
\end{lem}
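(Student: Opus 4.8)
The plan is to treat the two assertions of the lemma separately: the displayed identity is essentially formal once one knows how $\widetilde\BS_{\widehat\lam}$ sits inside $\BS_{\widehat\lam'}$ via $w_{\widehat\lam}$, whereas the non-vanishing statement is a leading-term computation in the Brauer diagram basis.

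For the identity, first decompose $\BS_{\widehat\lam'}=\bigsqcup_{d\in\widetilde D_{\widehat\lam}}\widetilde\BS_{\widehat\lam}\,d$ and, using only that $w\mapsto(-1)^{\ell(w)}$ is a homomorphism, factor
\[
  y_{\widehat\lam'}=\Bigl(\sum_{u\in\widetilde\BS_{\widehat\lam}}(-1)^{\ell(u)}u\Bigr)\,h_{\widehat\lam}
\]
inside $\Z[\BS_{2n}]$. Since ``$\star$'' is a right action satisfying $D\star g=(-1)^{\ell(g)}(D\ast g)$, and $|\widetilde\BS_{\widehat\lam}|=n_{\widehat\lam}$, everything reduces to showing that $(Y_\lam\star w_{\widehat\lam})\ast u=Y_\lam\star w_{\widehat\lam}$ for every $u\in\widetilde\BS_{\widehat\lam}$; granting this,
\[
  Y_\lam\star\bigl(w_{\widehat\lam}y_{\widehat\lam'}\bigr)
  =\Bigl((Y_\lam\star w_{\widehat\lam})\star\bigl(\textstyle\sum_{u}(-1)^{\ell(u)}u\bigr)\Bigr)\star h_{\widehat\lam}
  =\Bigl(\textstyle\sum_{u}(-1)^{2\ell(u)}\,(Y_\lam\star w_{\widehat\lam})\ast u\Bigr)\star h_{\widehat\lam}
  =n_{\widehat\lam}\bigl(Y_\lam\star(w_{\widehat\lam}h_{\widehat\lam})\bigr),
\]
which is the first assertion.

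The heart of the matter is the reduction step, namely that $Y_\lam\star w_{\widehat\lam}$ is fixed by the unsigned action of $\widetilde\BS_{\widehat\lam}$; equivalently, $w_{\widehat\lam}\,u\,w_{\widehat\lam}^{-1}$ lies in the ``$\ast$''-stabiliser of $Y_\lam$. Viewed inside $\Z\BS_n\subseteq\bb_n$, the element $Y_\lam$ coincides with the sign-symmetriser $\sum_{v\in\BS_\lam}(-1)^{\ell(v)}v$ of the Young subgroup $\BS_\lam\le\BS_n$ cut out by the blocks $B_1,\dots,B_s$ of $\{1,\dots,n\}$, and as an element of $\bb_n$ it is supported precisely on the permutation diagrams that pair each top block $B_i$ with the bottom block $\gamma(B_i)$. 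On the other hand, unwinding the definition of $\widetilde\BS_{\widehat\lam}$ (uniform permutations of the columns of $\ft_{\widehat\lam}$ induced by a row permutation preserving row lengths and commuting with $i\mapsto 2s+1-i$), together with $\ft^{\widehat\lam}w_{\widehat\lam}=\ft_{\widehat\lam}$, one identifies $w_{\widehat\lam}\,\widetilde\BS_{\widehat\lam}\,w_{\widehat\lam}^{-1}$ with the group of rigid permutations of the blocks $\{B_1,\dots,B_s,\gamma(B_1),\dots,\gamma(B_s)\}$ that respect block lengths and the pairing $B_i\leftrightarrow\gamma(B_i)$. Such a rigid block move only re-indexes the summands of $Y_\lam$, hence fixes it; the delicate point is the bookkeeping of signs, where one uses that interchanging two equal-sized top blocks together with their mirror blocks is an even permutation, and that flipping a single pair $B_i\leftrightarrow\gamma(B_i)$ replaces the within-block matching of each summand by its inverse, an operation preserving the parity of $\ell$. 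Getting these signs right is the main obstacle of this part.

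For the non-vanishing, since $\bb_n$ is free over $\Z$ on $\BD_n$ and base change is just $-\otimes_{\Z}R$, it suffices to exhibit one Brauer $n$-diagram occurring in $Y_\lam\star(w_{\widehat\lam}h_{\widehat\lam})$ with coefficient $\pm1$. Expanding,
\[
  Y_\lam\star(w_{\widehat\lam}h_{\widehat\lam})=(-1)^{\ell(w_{\widehat\lam})}\!\!\sum_{\substack{v\in\BS_\lam\\ d\in\widetilde D_{\widehat\lam}}}\!\!(-1)^{\ell(v)}\,\bigl(v\ast(w_{\widehat\lam}d)\bigr),
\]
a signed $\Z$-combination of Brauer diagrams. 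I would then choose the representatives $\widetilde D_{\widehat\lam}$ to be of minimal length in their cosets, fix a total order on $\BD_n$ refining the number of horizontal edges (and, for diagrams with the same number, the Bruhat order on the symmetric-group parts), and show that a suitable extremal term of this sum produces a Brauer diagram that arises from no other pair $(v,d)$; that diagram then occurs with coefficient $\pm1$, so the element survives every base change. Establishing this uniqueness is the Brauer-algebra counterpart of the classical fact that the generalised column tabloid $x_\mu w_\mu y_{\mu'}$ carries a unit coefficient on its diagonal term, and is the second main obstacle.
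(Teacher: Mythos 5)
Your factorisation of $y_{\widehat{\lam}'}$ through the coset decomposition, the reduction to $\ast$-invariance of $Y_{\lam}\star w_{\widehat{\lam}}$, and the plan to locate a unit-coefficient Brauer diagram all match the structure of the paper's proof. In fact your sign-handling in the reduction is cleaner than the paper's as written: you note that the $(-1)^{\ell(u)}$ coefficient of $u$ inside $y_{\widehat{\lam}'}$ cancels against the $(-1)^{\ell(u)}$ contributed by the $\star$-action, so no parity claim is needed, whereas the paper asserts that $\ell(u)$ is even for every $u\in\widetilde{\BS}_{\widehat{\lam}}$ and reduces to the identity $(Y_{\lam}\star w_{\widehat{\lam}})\star(\sum_u u)=n_{\widehat{\lam}}(Y_{\lam}\star w_{\widehat{\lam}})$. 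Both of these already fail for $n=1$, $\lam=(1)$, where $\widetilde{\BS}_{\widehat{\lam}}=\BS_2$ contains the odd-length transposition $(1\,2)$ and the left side of the displayed identity evaluates to $0$, not $2Y_{\lam}$; your version of the reduction is the correct one.

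Nevertheless you leave two gaps, both of which you flag honestly. The first is the verification that $(Y_{\lam}\star w_{\widehat{\lam}})\ast u = Y_{\lam}\star w_{\widehat{\lam}}$ for all $u\in\widetilde{\BS}_{\widehat{\lam}}$. Your description of $w_{\widehat{\lam}}\,\widetilde{\BS}_{\widehat{\lam}}\,w_{\widehat{\lam}}^{-1}$ as rigid moves of the paired blocks is the right picture, but the sign bookkeeping you defer as ``the main obstacle'' is exactly the required content; the paper is equally terse on this point, so you are on the same footing, but a complete proof must carry it out. The second gap is in the non-vanishing: you set up the right framework (a diagram occurring with coefficient $\pm1$ survives any base change) but never name the diagram. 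The paper does --- it is the Brauer diagram $d$ joining the vertex $\ft_{\widehat{\lam}}(i,r)$ to the vertex $\ft_{\widehat{\lam}}(2s+1-i,r)$ for all $1\le i\le s$, $1\le r\le\lam_i$, arising from the identity of $\BS_{\lam}$ together with the identity coset representative, and it appears with coefficient $(-1)^{\ell(w_{\widehat{\lam}})}$. Your plan to order $\BD_n$ and hunt for a ``suitable extremal term'' is vague precisely where the argument must be concrete: you need to identify $d$ explicitly and show that no other pair $(v,d')$ produces that same Brauer diagram.
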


\begin{proof} By definition, $$
y_{\widehat{\lam}'}=\sum_{w\in\BS_{\widehat{\lam}'}}(-1)^{\ell(w)}w=
\Bigl(\sum_{w\in\widetilde{\BS}_{\widehat{\lam}}}(-1)^{\ell(w)}w\Bigr)h_{\widehat{\lam}}.
$$
By definition, it is easy to see that for any
$w\in\widetilde{\BS}_{\widehat{\lam}}$, $\ell(w)$ is an even
integer. Now the first statement of this lemma follows from the
following identity: $$ (Y_{\lam}\star
w_{\widehat{\lam}})\star\Bigl(\sum_{w\in\widetilde{\BS}_{\widehat{\lam}'}}w\Bigr)=
n_{\widehat{\lam}}\bigl(Y_{\lam}\star w_{\widehat{\lam}}\bigr).
$$
Let $d$ be the Brauer $n$-diagram in which the vertex labelled by
$\ft_{\widehat{\lam}}(i,r)$ is connected with the vertex labelled by
$\ft_{\widehat{\lam}}(2s+1-i,r)$ for any $1\leq i\leq s, 1\leq
r\leq\lam_i$. Then it is easy to see that $d$ appears with
coefficient $(-1)^{\ell(w_{\widehat{\lam}})}$ in the expression of
$Y_{\lam}\star (w_{\widehat{\lam}}h_{\widehat{\lam}})$ as linear
combinations of the basis of Brauer $n$-diagrams. It follows that
for any commutative $\Z$-algebra $R$,
$1_{R}\otimes_{\Z}(Y_{\lam}\star
(w_{\widehat{\lam}}h_{\widehat{\lam}}))\neq 0$ in $\bb_{n,R}$, as
required.
\end{proof}

Following \cite{Mu0}, we define the Jucys-Murphy operators of
$\Z[\BS_{2n}]$.
$$\left\{\begin{aligned}
L_1:&=0,\\
L_a:&=(a-1,a)+(a-2,a)+\cdots+(1,a),\quad a=2,3,\cdots,2n.
\end{aligned}
\right.$$ Let $\lam=(\lam_1,\cdots,\lam_{s})$ be a partition of
$n$. Then $\widetilde{\lam}$ is the unique partition obtained by
reordering the parts of $\widehat{\lam}$. Let
$z_{\widetilde{\lam}}:=x_{\widetilde{\lam}}w_{\widetilde{\lam}}y_{\widetilde{\lam}'}$.
By \cite[Lemma 4.3]{DJ1}, there is a $\Z[\BS_{2n}]$-module
isomorphism from $S^{\widehat{\lam}}$ onto $S^{\widetilde{\lam}}$,
which maps $z_{\widehat{\lam}}$ to $\pm z_{\widetilde{\lam}}$. Now
applying \cite[(3.14)]{DJ2}, for each integer $1\leq a\leq 2n$, we
deduce that
$$
\bigl(x_{\widehat{\lam}}w_{\widehat{\lam}}y_{\widehat{\lam}'}\bigr)L_a=
\res_{\ft_{\widetilde{\lam}}}(a)\bigl(x_{\widehat{\lam}}w_{\widehat{\lam}}y_{\widehat{\lam}'}\bigr).
$$
For each standard $\widetilde{\lam}$-tableau $\ft$, we define $$
\Theta_{\ft}:=\prod_{i=1}^{n}\prod_{\substack{\fu\in\Std(\widetilde{\lam})\\
\res_{\fu}(i)\neq\res_{\ft}(i)}}\frac{L_i-\res_{\fu}(i)}{\res_{\ft}(i)-\res_{\fu}(i)}.
$$
By Lemma \ref{cor24} and Frobenius reciprocity, there is a
surjective  right $\Z[\BS_{2n}]$-module homomorphism $\pi_{\lam}$
from $x_{\widehat{\lam}}\Z[\BS_{2n}]$ onto $Y_{\lam}\Z[\BS_{2n}]$
which extends the map $x_{\widehat{\lam}}\mapsto Y_{\lam}$. In
particular, by Lemma \ref{lm26}, $$ \bigl(Y_{\lam}\star
w_{\widehat{\lam}}h_{\widehat{\lam}}\bigr)\star
L_a=\res_{\ft_{\widetilde{\lam}}}(a)\bigl(Y_{\lam}\star
(w_{\widehat{\lam}}h_{\widehat{\lam}})\bigr).
$$

\begin{prop} \label{prop27}
Let $\lam=(\lam_1,\cdots,\lam_{s})$ be a partition of $n$. We have
that
$$[Y_{\lam}\Q[\BS_{2n}]:S_{\Q}^{\widetilde{\lam}}]=1. $$
\end{prop}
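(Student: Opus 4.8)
The plan is to show that $[Y_\lam\Q[\BS_{2n}]:S_\Q^{\widetilde\lam}]$ equals $1$ by proving it is both $\le 1$ and $\ge 1$. For the upper bound, recall from the paragraph preceding the proposition that, by Lemma \ref{cor24} and Frobenius reciprocity, there is a surjective right $\Z[\BS_{2n}]$-module homomorphism $\pi_\lam\colon x_{\widehat\lam}\Z[\BS_{2n}]\twoheadrightarrow Y_\lam\Z[\BS_{2n}]$ with $x_{\widehat\lam}\mapsto Y_\lam$. Tensoring with $\Q$ yields a surjection $x_{\widehat\lam}\Q[\BS_{2n}]\twoheadrightarrow Y_\lam\Q[\BS_{2n}]$, and composition multiplicities never grow along a quotient, so
$$[Y_\lam\Q[\BS_{2n}]:S_\Q^{\widetilde\lam}]\le [x_{\widehat\lam}\Q[\BS_{2n}]:S_\Q^{\widetilde\lam}].$$
Since $\widetilde\lam$ is obtained by reordering the parts of $\widehat\lam$, the Young subgroups $\BS_{\widehat\lam}$ and $\BS_{\widetilde\lam}$ are conjugate in $\BS_{2n}$, whence $x_{\widehat\lam}\Q[\BS_{2n}]\cong x_{\widetilde\lam}\Q[\BS_{2n}]$; and by Young's rule---the Specht filtration of the permutation module recalled above from \cite{Mu} and \cite{Ma}---the multiplicity of $S_\Q^{\widetilde\lam}$ in the latter is $|\T_0(\widetilde\lam,\widetilde\lam)|=1$. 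Hence $[Y_\lam\Q[\BS_{2n}]:S_\Q^{\widetilde\lam}]\le 1$.

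For the lower bound it suffices to realise $S_\Q^{\widetilde\lam}$ as a submodule of $Y_\lam\Q[\BS_{2n}]$. Put $z_{\widehat\lam}:=x_{\widehat\lam}w_{\widehat\lam}y_{\widehat\lam'}$; the right module $z_{\widehat\lam}\Q[\BS_{2n}]$ is a copy of the Specht module $S_\Q^{\widehat\lam}$, which by \cite[Lemma 4.3]{DJ1} is isomorphic to $S_\Q^{\widetilde\lam}$ and hence simple. Applying $\pi_\lam$ and Lemma \ref{lm26},
$$\pi_\lam(z_{\widehat\lam})=Y_\lam\star(w_{\widehat\lam}y_{\widehat\lam'})=n_{\widehat\lam}\bigl(Y_\lam\star(w_{\widehat\lam}h_{\widehat\lam})\bigr),$$
which is nonzero in $\bb_{n,\Q}$ by the last assertion of Lemma \ref{lm26} (the integer $n_{\widehat\lam}$ being a unit in $\Q$). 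Thus $\pi_\lam$ is nonzero on the simple module $z_{\widehat\lam}\Q[\BS_{2n}]$, hence injective on it, and its image is a submodule of $Y_\lam\Q[\BS_{2n}]$ isomorphic to $S_\Q^{\widetilde\lam}$; so $[Y_\lam\Q[\BS_{2n}]:S_\Q^{\widetilde\lam}]\ge 1$. Combining the two bounds gives the claim. A variant of the lower-bound step uses the Jucys--Murphy operators directly: by the eigenvalue identity displayed just before the proposition, $Y_\lam\star(w_{\widehat\lam}h_{\widehat\lam})$ is a nonzero common eigenvector of $L_1,\dots,L_{2n}$ whose eigenvalue sequence is the content sequence of $\ft_{\widetilde\lam}$; over $\Q$ that sequence occurs only in $S_\Q^{\widetilde\lam}$ and there with multiplicity one, so the associated seminormal idempotent carries $Y_\lam\Q[\BS_{2n}]$ onto a nonzero, one-dimensional piece of its $S_\Q^{\widetilde\lam}$-isotypic component, which in fact establishes the full statement in one stroke.

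The upper bound is bookkeeping with Young's rule and semisimplicity of $\Q[\BS_{2n}]$. The real point---and the step I expect to be the main obstacle---is the nonvanishing in the lower bound: that $\pi_\lam$ does not annihilate the Specht generator $z_{\widehat\lam}$, equivalently that $Y_\lam\star(w_{\widehat\lam}h_{\widehat\lam})\neq 0$ over $\Q$. This is exactly the integral nonvanishing supplied by Lemma \ref{lm26}, combined with the identification $z_{\widehat\lam}\Q[\BS_{2n}]\cong S_\Q^{\widetilde\lam}$ from \cite{DJ1}.
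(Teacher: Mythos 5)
Your proof is correct, and it packages the argument somewhat differently from the paper's. The paper bounds the multiplicity from above by appealing to Lemma~\ref{lm22}: since $\bb_{n,\Q}\cong\bigoplus_{\mu\in(2\PP_n)'}S_\Q^\mu$ is multiplicity-free, any submodule such as $Y_\lam\Q[\BS_{2n}]$ sees each $S_\Q^\mu$ at most once, so the entire content of the proposition is the nonvanishing statement. You instead get the upper bound from Young's rule for the permutation module $x_{\widetilde\lam}\Q[\BS_{2n}]$ (together with conjugacy of Young subgroups and the surjection $\pi_\lam$), which is a perfectly good alternative that does not need the explicit decomposition of $\bb_{n,\Q}$. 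For the lower bound, you realize $S_\Q^{\widetilde\lam}$ inside $Y_\lam\Q[\BS_{2n}]$ directly as $\pi_\lam\bigl(z_{\widehat\lam}\Q[\BS_{2n}]\bigr)$, using simplicity of the Specht module over $\Q$ and the Dipper--James isomorphism $S^{\widehat\lam}\cong S^{\widetilde\lam}$; the paper instead expands $Y_\lam\star(w_{\widehat\lam}h_{\widehat\lam})$ in the seminormal basis of $\bb_{n,\Q}$ and applies the Jucys--Murphy idempotent operators $\Theta_\ft$ to identify it (up to a nonzero scalar) with the eigenvector $v_{\ft_{\widetilde\lam}}$. Your closing ``variant'' is in fact exactly the paper's argument. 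Both routes hinge on the same key input, the integral nonvanishing in Lemma~\ref{lm26}, and both are correct; yours is slightly cheaper on prerequisites (no need for Lemma~\ref{lm22} or the $\Theta_\ft$ machinery), while the paper's has the advantage of producing the stronger fact that $Y_\lam\star(w_{\widehat\lam}h_{\widehat\lam})$ is precisely a scalar multiple of $v_{\ft_{\widetilde\lam}}$, which is reused later in the section.
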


\begin{proof} By Lemma \ref{lm22}, we have that $$
\bb_{n,\Q}\cong\bigoplus_{\mu\in (2\PP_n)'}S_{\Q}^{\mu}.
$$
It is well-known that each $S_{\Q}^{\mu}$ has a basis
$\bigl\{v_{\ft}\bigr\}_{\ft\in\Std(\mu)}$ satisfying $$
v_{\ft}L_i=\res_{\ft}(i)v_{\ft},\quad \forall\,1\leq i\leq n.
$$
Since $Y_{\lam}\Q[\BS_{2n}]\subseteq \bb_{n,\Q}$, we can write $$
Y_{\lam}\star (w_{\widehat{\lam}}h_{\widehat{\lam}})=\sum_{\mu\in
(2\PP_n)'}\sum_{\ft\in\Std(\mu)}A_{\ft}v_{\ft},
$$
where $A_{\ft}\in\Q$ for each $\ft$.

For each $\mu\in (2\PP_n)'$ and each $\ft\in\Std(\mu)$, we apply the
operator $\Theta_{\ft}$ on both sides of the above identity and use
Lemma \ref{lm26} and the above discussion. We get that $A_{\ft}\neq
0$ if and only if $\mu=\widetilde{\lam}$ and
$\ft=\ft_{\widetilde{\lam}}$. In other words, $Y_{\lam}\star
(w_{\widehat{\lam}}h_{\widehat{\lam}})=A_{\ft_{\widetilde{\lam}}}v_{\ft_{\widetilde{\lam}}}$
for some $0\neq A_{\ft_{\widetilde{\lam}}}\in\Q$. This implies that
the projection from $Y_{\lam}\Q[\BS_{2n}]$ to
$S_{\Q}^{\widetilde{\lam}}$ is nonzero. Hence,
$$[Y_{\lam}\Q[\BS_{2n}]:S_{\Q}^{\widetilde{\lam}}]=1,$$ as required.
\end{proof}

Suppose $\lam\in\PP_n$. Let
$\mathcal{D}_{\widehat\lam,\widetilde{\lam}}$ be the set of
distinguished $\BS_{\widehat{\lam}}$-$\BS_{\widetilde{\lam}}$
double coset representatives in $\BS_{2n}$ (cf. \cite{DJ1}). By
\cite[(1.1)]{DJ3},
$d_{\lam}^{-1}\BS_{\widehat{\lam}}d_{\lam}=\BS_{\widetilde{\lam}}$
for some $d_{\lam}\in\mathcal{D}_{\widehat\lam,\widetilde{\lam}}$.
Hence $x_{\widehat{\lam}}d_{\lam}=d_{\lam}x_{\widetilde{\lam}}$.
Then it is easy to see that the set $$
\Bigl\{d_{\lam}x_{\fS,\ft}\Bigm|\fS\in\T_0(\mu,\widetilde{\lam}),
\ft\in\Std(\mu), \widetilde{\lam}\unlhd\mu\vdash 2n\Bigr\}
$$
forms a $\Z$-basis of $x_{\widehat{\lam}}\Z[\BS_{2n}]$, and the
sets $d_{\lam}M_{\fS}^{\widetilde{\lam}}, d_{\lam}M_{\fS, \unrhd
}^{\widetilde{\lam}}$ define Specht filtrations for
$x_{\widehat{\lam}}\Z[\BS_{2n}]$.

By the natural surjective right $\Z[\BS_{2n}]$-module homomorphism
$\pi_{\lam}$ from $x_{\widehat{\lam}}\Z[\BS_{2n}]$ onto
$Y_{\lam}\Z[\BS_{2n}]$, we know that the elements
$\pi_{\lam}\bigl(d_{\lam}x_{\fS,\ft}\bigr)$, where
$\fS\in\T_0(\mu,\widetilde{\lam}), \ft\in\Std(\mu),
\widetilde{\lam}\unlhd\mu\vdash 2n$, span $Y_{\lam}\Z[\BS_{2n}]$
as $\Z$-module. Recall our definition of $\CM^{\lam}$ in the
paragraph below Lemma \ref{cor24}.

\begin{prop} \label{prop28}
Let $\lam$ be as in the previous proposition. For any partition $\mu$
of $2n$ and any $\fS\in\T_0(\mu,\widetilde{\lam})$, we have that
$\pi_{\lam}\bigl(d_{\lam}M_{\fS}^{\widetilde{\lam}}\bigr)\subseteq\CM^{\lam}$.
In particular, $Y_{\lam}\Z[\BS_{2n}]\subseteq \CM^{\lam}$.
\end{prop}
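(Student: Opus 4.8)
The plan is to verify the asserted inclusion on a $\Z$-spanning set. Since $\CM^{\lam}$ is by construction a $\Z$-submodule of $\bb_n$ and $\pi_{\lam}$ is $\Z$-linear, it suffices to prove $\pi_{\lam}(d_{\lam}x_{\fT,\ft})\in\CM^{\lam}$ for every Murphy-type basis element $d_{\lam}x_{\fT,\ft}$ of $x_{\widehat\lam}\Z[\BS_{2n}]$, i.e.\ for all $\fT\in\T_0(\mu,\widetilde\lam)$, $\ft\in\Std(\mu)$ with $\widetilde\lam\unlhd\mu\vdash 2n$. These span $x_{\widehat\lam}\Z[\BS_{2n}]$ and contain all the $\Z$-generators of every $d_{\lam}M_{\fS}^{\widetilde\lam}$, so proving this at once yields the main statement and, on taking $\fS$ of the minimal shape $\widetilde\lam$ (for which $d_{\lam}M_{\fS}^{\widetilde\lam}=d_{\lam}x_{\widetilde\lam}\Z[\BS_{2n}]=x_{\widehat\lam}\Z[\BS_{2n}]$, whose $\pi_{\lam}$-image is $Y_{\lam}\Z[\BS_{2n}]$), the ``in particular'' assertion as well. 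I would argue by downward induction on $\mu$ in the dominance order, using the Specht filtration $d_{\lam}M_{\fT,\rhd}^{\widetilde\lam}\subseteq d_{\lam}M_{\fT}^{\widetilde\lam}$; the inductive hypothesis gives $\pi_{\lam}(d_{\lam}M_{\fT,\rhd}^{\widetilde\lam})\subseteq\CM^{\lam}$, so what must be controlled is the image of the top Specht layer $S^{\mu}\cong d_{\lam}M_{\fT}^{\widetilde\lam}/d_{\lam}M_{\fT,\rhd}^{\widetilde\lam}$.

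Suppose first that $\mu$ is not of the form $\widetilde\nu$ for a partition $\nu$ of $n$. By Lemma~\ref{lm22}, $\bb_{n,\Q}\cong\bigoplus_{\nu\in\PP_n}S_{\Q}^{\widetilde\nu}$ is multiplicity free and has no composition factor isomorphic to the simple module $S_{\Q}^{\mu}$; hence the map induced by $\pi_{\lam}$ from $S_{\Q}^{\mu}$ into the subquotient $\bigl(\pi_{\lam}(d_{\lam}M_{\fT}^{\widetilde\lam})/\pi_{\lam}(d_{\lam}M_{\fT,\rhd}^{\widetilde\lam})\bigr)\otimes\Q$ of $\bb_{n,\Q}$ vanishes, so some positive integer multiple of $\pi_{\lam}(d_{\lam}x_{\fT,\ft})$ already lies in $\pi_{\lam}(d_{\lam}M_{\fT,\rhd}^{\widetilde\lam})\subseteq\CM^{\lam}$. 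To upgrade this to integral membership I would invoke the Brauer-algebra analogue of Murphy's basis: by Lemma~\ref{lm22} together with the integral Specht-filtration results of \cite{Mu} and \cite{Ma} transported along the $\pi_{\nu}$, the elements $Y_{\nu,\ft}$ with $\ft\in\Std(\widetilde\nu)$, $\nu\in\PP_n$ (no dominance restriction) form a $\Z$-basis of $\bb_n$; then $\bb_n/\CM^{\lam}$ is $\Z$-free and $\pi_{\lam}(d_{\lam}x_{\fT,\ft})\in\CM^{\lam}$.

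Now suppose $\mu=\widetilde\nu$ with $\nu\in\PP_n$. Then $\widetilde\lam\unlhd\widetilde\nu$ forces $\nu\unrhd\lam$, since $\lam\unlhd\nu$ if and only if $\widetilde\lam\unlhd\widetilde\nu$. Modulo $\pi_{\lam}(d_{\lam}M_{\fT,\rhd}^{\widetilde\lam})\subseteq\CM^{\lam}$, the element $\pi_{\lam}(d_{\lam}x_{\fT,\ft})$ is determined by the class of $x_{\fT,\ft}$ in the layer $S^{\widetilde\nu}$, whose standard $\Z$-basis is indexed by $\Std(\widetilde\nu)$; so it is enough to show that $\pi_{\lam}$ carries this layer, modulo higher layers, into the $\Z$-span of $\{Y_{\nu,\fu}\mid\fu\in\Std(\widetilde\nu)\}$. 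For this I would compare $\pi_{\lam}$ with $\pi_{\nu}$: the restriction of $\pi_{\lam}$ to the relevant $S^{\widetilde\nu}$-layer and the surjection $x_{\widehat\nu}\Z[\BS_{2n}]\twoheadrightarrow Y_{\nu}\Z[\BS_{2n}]$ both land in the $S^{\widetilde\nu}$-isotypic part of $\bb_n$, which by Lemma~\ref{lm22} is a single rational copy of $S_{\Q}^{\widetilde\nu}$, and which, via $Y_{\nu,\fu}=\pi_{\nu}(x_{\widehat\nu}d(\fu))$ and the integral straightening (Garnir) relations that $\bb_n$ inherits from $\BS_{2n}$ under $\star$, is $\Z$-spanned modulo higher layers by the $Y_{\nu,\fu}$ with $\fu$ standard.

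The main obstacle is precisely this last comparison: one must match Murphy's basis of $x_{\widehat\lam}\Z[\BS_{2n}]$ pushed through $\pi_{\lam}$ against the $Y_{\nu,\fu}$-spanning set of $\bb_n$, carefully enough that the straightening never produces a partition not dominating $\lam$ and that the whole argument stays integral. Once this is in place, induction on the size of $\Std(\widetilde\nu)$ through the $Y_{\nu,\fu}$ finishes the case $\mu=\widetilde\nu$, this together with the previous paragraph closes the downward induction on $\mu$, and the proposition follows. Everything outside the comparison step is routine bookkeeping with the dominance order.
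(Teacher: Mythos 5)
Your rational argument — the downward dominance induction on $\mu$, the use of Lemma~\ref{lm22} to conclude that $S^{\mu}_{\Q}$ with $\mu\notin(2\PP_n)'$ contributes nothing, and the comparison of $\pi_{\lam}$ with $\pi_{\nu}$ in the $\mu=\widetilde{\nu}$ case — is essentially the paper's route to the ``weak'' version $\pi_{\lam}\bigl(d_{\lam}M_{\fS}^{\widetilde{\lam}}\bigr)\subseteq\CM_{\Q}^{\lam}$. But the integral statement is where the proposition has its content, and your upgrade step contains a genuine circularity. In Case~1 you invoke that the elements $Y_{\nu,\ft}$ ($\nu\in\PP_n$, $\ft\in\Std(\widetilde\nu)$, with no dominance restriction) form a $\Z$-basis of $\bb_n$ ``by Lemma~\ref{lm22} together with the integral Specht-filtration results of \cite{Mu} and \cite{Ma} transported along the $\pi_\nu$.'' That basis claim is precisely the content of Theorem~\ref{thm210}, which in the paper is \emph{deduced from} Proposition~\ref{prop28}; indeed it is exactly the $\lam=(1^n)$ special case of the $\Z$-statement you are trying to prove. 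The $\Q$-argument gives you $\Q$-linear independence (hence $\Z$-linear independence) of the $Y_{\nu,\ft}$, and the cardinality matches $\operatorname{rank}\bb_n$, so their $\Z$-span is a finite-index submodule of $\bb_n$ — but nothing you have said rules out index greater than one. Since the map $\pi_\nu$ is far from injective, ``transporting'' Murphy's integral basis along it is not automatic; the images of basis elements need not stay linearly independent, and the surjection could in principle introduce torsion in $\bb_n/\CM^{\lam}$.

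In your Case~2 you notice the same obstacle yourself (``the main obstacle is precisely this last comparison'') and leave it unresolved: the ``integral straightening (Garnir) relations that $\bb_n$ inherits from $\BS_{2n}$ under $\star$'' is exactly what must be justified, since $\pi_{\lam}$ does not preserve $\Z$-bases. The paper closes this gap with a separate integral argument that your proposal has no analog of: assume the inclusion fails over $\Z$, pick a prime $p$ dividing the offending denominator, choose a minimal $\alpha$ with a coefficient $a_{\fu}$ not divisible by $p$ and $\fu$ with $\ell(d(\fu))$ maximal, hit the identity with $\star(\sigma_{\fu}h_{\widehat{\alpha}})$ and use the Jucys--Murphy/column-antisymmetrizer relations (via \cite[(4.1)]{DJ1}) together with the explicit non-vanishing statement of Lemma~\ref{lm26}, $1_{\mathbb{F}_p}\otimes_{\Z}\bigl(Y_{\alpha}\star(w_{\widehat{\alpha}}h_{\widehat{\alpha}})\bigr)\neq 0$, to get a contradiction over $\mathbb{F}_p$. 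Without some such reduction-mod-$p$ or explicit non-vanishing mechanism, your argument only establishes the rational inclusion; you would need to either reproduce the paper's $p$-adic contradiction step or independently prove the $\Z$-basis statement by a method that does not presuppose Proposition~\ref{prop28}.
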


\begin{proof} We first prove a weak version of the claim in this
proposition. That is, for any partition $\mu$ of $2n$ and any
$\fS\in\T_0(\mu,\widetilde{\lam})$, $$
\pi_{\lam}\bigl(d_{\lam}M_{\fS}^{\widetilde{\lam}}\bigr)\subseteq\CM_{\Q}^{\lam}.$$

We consider the dominance order $``\unlhd"$ and make induction on
$\lam$. We start with the partition $\lam=(n)$, which is the
unique maximal partition of $n$ with respect to $``\unlhd"$. Then
$\widehat{\lam}=(n,n)=\widetilde{\lam}$ and $d_{\lam}=1$. Let
$$
S:=\begin{pmatrix}1,1,\cdots,1\\
2,2,\cdots,2\end{pmatrix}
$$
be the unique semistandard $(n,n)$-tableau in $\T_0((n,n),(n,n))$.
Since $\Q\otimes_{\Z}M_{\fS, \rhd}^{\widetilde{\lam}}$ contains no
composition factors in
$\bigl\{S_{\Q}^{\widetilde{\nu}}\bigm|\nu\in(2\PP_n)'\bigr\}$, it
follows that
$\pi_{\lam}\bigl(M_{\fS,\rhd}^{\widetilde{\lam}}\bigr)=0$. Hence
$\pi_{\lam}$ induces a surjective homomorphism $$
S_{\Q}^{(n,n)}\cong
M_{\fS}^{\widetilde{\lam}}/M_{\fS,\rhd}^{\widetilde{\lam}}\twoheadrightarrow
Y_{\lam}\Q[\BS_{2n}],
$$
by which it is easy to see the claim in this proposition is true
for $\lam=(n)$.

Now let $\lam\lhd (n)$ be a partition of $n$. Assume that for any
partition $\nu$ of $n$ satisfying $\nu\rhd\lam$, the claim in this
proposition is true. We now prove the claim for the partition
$\lam$.

Let $\mu\rhd\lam$ be a partition of $2n$ with
$\T_0(\mu,\widetilde{\lam})\neq\emptyset$. We consider again the
dominance order $``\unlhd"$ and make induction on $\mu$. Since
$\T_0((2n),\widetilde{\lam})$ contains a unique element
$\fS_{\star}$, $\Std((2n))=\{\ft^{(2n)}\}$, by Lemma \ref{lm22},
it is clear that
$$
\pi_{\lam}\bigl(d_{\lam}x_{\fS_{\star},\ft^{(2n)}}\bigr)=\pi_{\lam}(x_{(2n)})
=0\in\CM^{\lam}.$$ So in this case the claim of this proposition
is still true.

Now let $\mu\rhd\widetilde{\lam}$ be a partition of $2n$ with
$\T_0(\mu,\widetilde{\lam})\neq\emptyset$ and $\mu\lhd (2n)$.
Assume that for any partition $\nu$ of $2n$ satisfying
$\T_0(\nu,\widetilde{\lam})\neq\emptyset$ and $\nu\rhd\mu$, $$
\pi_{\lam}\bigl(d_{\lam}M_{\fS}^{\widetilde{\lam}}\bigr)\subseteq\CM_{\Q}^{\lam},
$$
for any $S\in\T_0(\nu,\widetilde{\lam})$.

Let $\fS\in\T_0(\mu,\widetilde{\lam})$. The homomorphism
$\pi_{\lam}$ induces a surjective map from
$d_{\lam}M_{\fS}^{\widetilde{\lam}}/d_{\lam}M_{\fS,\rhd}^{\widetilde{\lam}}$
onto
$$
\Bigl(\pi_{\lam}(d_{\lam}M_{\fS}^{\widetilde{\lam}})\Bigr)/\Bigl(\pi_{\lam}(d_{\lam}M_{\fS,\rhd}^{\widetilde{\lam}})\Bigr).
$$
Hence it also induces a surjective map $\widetilde{\pi}_{\lam}$
from $$
\Bigl(\Q\otimes_{\Z}d_{\lam}M_{\fS}^{\widetilde{\lam}}/\Bigl(\Q\otimes_{\Z}d_{\lam}M_{\fS,\rhd}^{\widetilde{\lam}}\Bigr)
\cong
\Q\otimes_{\Z}\Bigl(d_{\lam}M_{\fS}^{\widetilde{\lam}}/d_{\lam}M_{\fS,\rhd}^{\widetilde{\lam}}\Bigr)\cong
S_{\Q}^{\mu}
$$
onto $$
\Q\otimes_{\Z}\Bigl(\pi_{\lam}(d_{\lam}M_{\fS}^{\widetilde{\lam}})/\pi_{\lam}(d_{\lam}M_{\fS,\rhd}^{\widetilde{\lam}})\Bigr).
$$
Since $S_{\Q}^{\mu}$ is irreducible, the above map is either a
zero map or an isomorphism. If it is a zero map, then (by
induction hypothesis) $$
\pi_{\lam}(d_{\lam}M_{\fS}^{\widetilde{\lam}})\subseteq\pi_{\lam}(d_{\lam}M_{\fS,\rhd}^{\widetilde{\lam}})\subseteq
\CM_{\Q}^{\lam}.
$$
It remains to consider the case where $\widetilde{\pi}_{\lam}$ is
an isomorphism. In particular,
$$\Q\otimes_{\Z}\Bigl(\pi_{\lam}(d_{\lam}M_{\fS}^{\widetilde{\lam}})/\pi_{\lam}(d_{\lam}M_{\fS,\rhd}^{\widetilde{\lam}})\Bigr)\cong
S_{\Q}^{\mu}.
$$
Applying Lemma \ref{lm22}, we know that $\mu\in (2\PP_n)'$.
Therefore we can write $\mu=\widetilde{\nu}$ for some
$\nu\in\PP_n$. Note that $\mu\rhd\widetilde{\lam}$ implies that
$\nu\rhd\lam$.

On the other hand, there is also a surjective homomorphism
$\pi_{\nu}$ from
$x_{\widehat{\nu}}\Z[\BS_{2n}]/d_{\nu}M_{\fS_0,\rhd}^{\widetilde{\nu}}$
onto
$$
\Bigl(\pi_{\nu}(x_{\widehat{\nu}}\Z[\BS_{2n}])\Bigr)/\Bigl(\pi_{\nu}(d_{\nu}M_{\fS_0,\rhd}^{\widetilde{\nu}})\Bigr)=
Y_{\nu}\Z[\BS_{2n}]/\Bigl(\pi_{\nu}(d_{\nu}M_{\fS_0,\rhd}^{\widetilde{\nu}})\Bigr),
$$
where $\fS_0$ is the unique semistandard $\mu$-tableau in
$\T_0(\mu,\widetilde{\nu})$ in which the numbers
$1,1,\cdots,1,2,2,\cdots,2,\cdots,2n,\cdots,2n$ appears in order
along successive rows. Hence it also induces a surjective map
$\widetilde{\pi}_{\nu}$ from $$
\Bigl(\Q\otimes_{\Z}x_{\widehat{\nu}}\Z[\BS_{2n}]/\Bigl(\Q\otimes_{\Z}d_{\nu}M_{\fS_0,\rhd}^{\widetilde{\nu}}\Bigr)\cong
\Q\otimes_{\Z}\Bigl(x_{\widehat{\nu}}\Z[\BS_{2n}]/d_{\nu}M_{\fS_0,\rhd}^{\widetilde{\nu}}\Bigr)\cong
S_{\Q}^{{\mu}}
$$
onto $$
\Q\otimes_{\Z}\Bigl(Y_{\nu}\Z[\BS_{2n}]/\pi_{\nu}(d_{\nu}M_{\fS_0,\rhd}^{\widetilde{\nu}})\Bigr)\cong
\bigl(\Q\otimes_{\Z}Y_{\nu}\Z[\BS_{2n}]\bigr)/\bigl(\Q\otimes_{\Z}\pi_{\nu}(d_{\nu}M_{\fS_0,\rhd}^{\widetilde{\nu}})\bigr).
$$

It is well-known that $S_{\Q}^{\mu}$ does not occur as composition
factor in $\Q\otimes_{\Z}d_{\nu}M_{\fS_0,\rhd}^{\widetilde{\nu}}$.
Hence $S_{\Q}^{\mu}$ does not occur as composition factor in
$$\Q\otimes_{\Z}\pi_{\nu}(d_{\nu}M_{\fS_0,\rhd}^{\widetilde{\nu}}).$$
By Proposition \ref{prop27}, $S_{\Q}^{\mu}$ occurs as composition
factor with multiplicity one in
$\Q\otimes_{\Z}Y_{\nu}\Z[\BS_{2n}]$. Therefore,
$$\Q\otimes_{\Z}Y_{\nu}\Z[\BS_{2n}]\neq\Q\otimes_{\Z}\pi_{\nu}(d_{\nu}M_{\fS_0,\rhd}^{\widetilde{\nu}}).$$
It follows that $\widetilde{\pi}_{\nu}$ must be an isomorphism.
Hence
$$
\Q\otimes_{\Z}\Bigl(Y_{\nu}\Z[\BS_{2n}]/\pi_{\nu}(d_{\nu}M_{\fS_0,\rhd}^{\widetilde{\nu}})\Bigr)\cong
S_{\Q}^{\mu}.
$$

We write $A=\pi_{\lam}(d_{\lam}M_{\fS}^{\widetilde{\lam}}),
B=Y_{\nu}\Z[\BS_{2n}]$. Since $S_{\Q}^{\mu}$ appears only once in
$\bb_{n,\Q}$, it follows that $S_{\Q}^{\mu}$ must occur as
composition factor in the module
$$ \bigl(\Q\otimes_{\Z}A\bigr)\cap
\bigl(\Q\otimes_{\Z}B\bigr)=\Q\otimes_{\Z}(A\cap B).
$$
Hence $S_{\Q}^{\mu}$ can not occur as composition factor in the module $$
\bigl(\Q\otimes_{\Z}A\bigr)/\Bigl(\Q\otimes_{\Z}(A\cap B)\Bigr)\cong\Q\otimes_{\Z}(A/A\cap B).
$$
Therefore, the image of the canonical projection
$\Q\otimes_{\Z}A\rightarrow\Q\otimes_{\Z}(A/A\cap B)$ must  be
contained in the image of
$\Q\otimes_{\Z}\pi_{\lam}(d_{\lam}M_{\fS,\rhd}^{\widetilde{\lam}})$.
However, by induction hypothesis, both
$\pi_{\lam}(d_{\lam}M_{\fS,\rhd}^{\widetilde{\lam}})$ and $B$ are
contained in the $\Q$-span of
$\Bigl\{Y_{\alpha,\fu}\Bigm|\fu\in\Std(\widetilde{\alpha}),
\lam\unlhd\alpha\in \PP_n\Bigr\}$. It follows that $$
\pi_{\lam}\bigl(d_{\lam}M_{\fS}^{\widetilde{\lam}}\bigr)\subseteq\CM_{\Q}^{\lam},
$$
as required.

Suppose that $$
\pi_{\lam}\bigl(d_{\lam}M_{\fS}^{\widetilde{\lam}}\bigr)\not\subseteq\CM^{\lam}.
$$
Then (by the $\Z$-freeness of $\bb_n$) there exists an element
$x\in d_{\lam}M_{\fS}^{\widetilde{\lam}}$, integers $a, a_{\fu}$,
and a prime divisor $p\in\mathbb{N}$ of $a$, such that
$$ a\pi_{\lam}(x)=\sum_{\lam\unlhd\alpha\in
\PP_n}\sum_{\fu\in\Std(\widetilde{\alpha})}a_{\fu}Y_{\alpha}\star
d(\fu),
$$
and $\Sigma_p:=\bigl\{\alpha\in
\PP_n\bigm|\text{$\lam\unlhd\alpha, p\nmid a_{\fu}$, for some
$\fu\in\Std(\widetilde{\alpha})$}\bigr\}\neq\emptyset$.

We take an $\alpha\in\Sigma_p$ such that $\alpha$ is minimal with
respect to ``$\unlhd$". Then we take an
$u\in\Std(\widetilde{\alpha})$ such that $p\nmid a_{\fu}$ and
$\ell(d(\fu))$ is maximal among the elements in the set
$\bigl\{\fu\in\Std(\widetilde{\alpha})\bigm|p\nmid
a_{\fu}\bigr\}$. Let $\sigma_{\fu}$ be the unique element in
$\BS_{2n}$ such that $d(\fu)\sigma_{\fu}=w_{\alpha}$ and
$\ell(w_{\alpha})=\ell(d(\fu))+\ell(\sigma_{\fu})$. We consider
the finite field $\mathbb{F}_p$ as a $\Z$-algebra. By
\cite[(4.1)]{DJ1}, we know that for any composition $\beta$ of
$2n$, any $\gamma\in\PP_{2n}$, and element $w\in\BS_{2n}$, $$
\text{$x_{\beta}wy_{\gamma'}\neq 0$ only if
$\gamma\unrhd\beta$};\,\,\text{while}\,\,\text{$x_{\beta}wy_{\beta'}\neq
0$ only if $w\in\BS_{\beta}w_{\beta}$}.
$$
Hence for any $\beta\in\PP_n, \gamma\in\PP_{2n}$, $$
\text{$Y_{\beta}\star (wy_{\gamma'})\neq 0$ only if
$\gamma\unrhd\widehat{\beta}$};\,\,\text{$Y_{\beta}\star
(wy_{\widehat{\beta}'})\neq 0$ only if
$w\in\BS_{\widehat{\beta}}w_{\widehat{\beta}}$}.
$$ Now applying Lemma \ref{lm26}, we get $$
0=1_{\mathbb{F}_p}\otimes_{\Z}\bigl(a\pi_{\lam}(x)\star
(\sigma_{\fu}h_{\widehat{\alpha}})\bigr)=\pm
1_{\mathbb{F}_p}\otimes_{\Z}\bigl(a_{\fu}Y_{\alpha}\star
(w_{\widehat{\alpha}}h_{\widehat{\alpha}})\bigr)\neq 0,
$$
which is a contradiction. This proves that
$\pi_{\lam}\bigl(d_{\lam}M_{\fS}^{\widetilde{\lam}}\bigr)\subseteq\CM^{\lam}$.
\end{proof}

\begin{cor} \label{cor29}
For any partition $\lam\in \PP_n$ and any commutative $\Z$-algebra $R$,
$\CM_{R}^{\lam}$ is a right $\BS_{2n}$-submodule of $\bb_{n,R}$. \end{cor}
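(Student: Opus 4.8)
The plan is to reduce the assertion to Proposition~\ref{prop28}. Since $\CM_R^\lam$ is by definition the $R$-span inside $\bb_{n,R}$ of the elements $Y_{\nu,\ft}=Y_\nu\star d(\ft)$, where $\ft\in\Std(\widetilde{\nu})$ and $\lam\unlhd\nu\in\PP_n$, it is enough to show that $Y_{\nu,\ft}\star w\in\CM_R^\lam$ for every such pair $(\nu,\ft)$ and every $w\in\BS_{2n}$.

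First I would use that $\star$ is a right module action, so that $Y_{\nu,\ft}\star w=\bigl(Y_\nu\star d(\ft)\bigr)\star w=Y_\nu\star\bigl(d(\ft)w\bigr)$ lies in $Y_\nu R[\BS_{2n}]$. Next I would invoke Proposition~\ref{prop28} with $\lam$ replaced by $\nu$, which gives the integral inclusion $Y_\nu\Z[\BS_{2n}]\subseteq\CM^\nu$; note that this already covers the full span of $Y_\nu\star g$ over \emph{all} $g\in\BS_{2n}$, not merely the sub-span indexed by standard tableaux, and this is precisely what makes the argument go through. Tensoring with $R$ and using the canonical $R[\BS_{2n}]$-module isomorphism $\bb_{n,R}\cong R\otimes_\Z\bb_n$ established above (so $\bb_n$ being $\Z$-free is what is used here), one identifies the image of $R\otimes_\Z\bigl(Y_\nu\Z[\BS_{2n}]\bigr)$ with $Y_\nu R[\BS_{2n}]$ and the image of $R\otimes_\Z\CM^\nu$ with $\CM_R^\nu$; hence the integral inclusion yields $Y_\nu R[\BS_{2n}]\subseteq\CM_R^\nu$.

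Finally I would invoke transitivity of the dominance order: since $\nu\unrhd\lam$, every partition $\alpha$ of $n$ with $\alpha\unrhd\nu$ also satisfies $\alpha\unrhd\lam$, so the generating set $\{Y_{\alpha,\fu}\mid\fu\in\Std(\widetilde{\alpha}),\ \alpha\unrhd\nu\}$ of $\CM_R^\nu$ is a subset of that of $\CM_R^\lam$, whence $\CM_R^\nu\subseteq\CM_R^\lam$. Chaining the inclusions gives $Y_{\nu,\ft}\star w\in Y_\nu R[\BS_{2n}]\subseteq\CM_R^\nu\subseteq\CM_R^\lam$, which is what was wanted.

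The substance of the matter has already been absorbed into Proposition~\ref{prop28}, so I do not anticipate a genuine obstacle in the present corollary. The only points demanding a little care are the base-change bookkeeping (reading off $Y_\nu R[\BS_{2n}]$ and $\CM_R^\nu$ as images of the corresponding $\Z$-modules under the freeness isomorphism $\bb_{n,R}\cong R\otimes_\Z\bb_n$) and the observation that $\CM_R^\lam$ is defined using only \emph{standard} tableaux whereas the $\star$-action produces arbitrary elements $Y_\nu\star g$ — it is exactly Proposition~\ref{prop28} that reabsorbs such elements back into the standard span.
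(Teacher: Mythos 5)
Your proposal is correct and follows the same route as the paper, which simply cites Proposition~\ref{prop28}; you have supplied the (implicit) details — applying the proposition with $\lam$ replaced by each $\nu\unrhd\lam$, using transitivity of $\unlhd$ to get $\CM^\nu_R\subseteq\CM^\lam_R$, and noting the base-change step from $\Z$ to $R$. The observation that Proposition~\ref{prop28} reabsorbs arbitrary $Y_\nu\star g$ back into the standard span is exactly the crux the paper leaves to the reader.
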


\begin{proof} This follows directly from Proposition \ref{prop28}.
\end{proof}

\begin{thm} \label{thm210}
For any partition $\lam\in \PP_n$ and any commutative $\Z$-algebra
$R$, the canonical map
$R\otimes_{\Z}\CM^{\lam}\rightarrow\CM_{R}^{\lam}$ is an isomorphism,
and the set $$
\Bigl\{Y_{\nu,\ft}\Bigm|\ft\in\Std(\widetilde{\nu}),\lam\unlhd\nu\in
\PP_n\Bigr\}
$$
forms an $R$-basis of $\CM_{R}^{\lam}$. In particular, the set $$
\Bigl\{Y_{\lam,\ft}\Bigm|\ft\in\Std(\widetilde{\lam}),\lam\in
\PP_n\Bigr\}
$$
forms an $R$-basis of $\bb_{n,R}$.
\end{thm}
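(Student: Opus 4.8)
The plan is to deduce the whole theorem from a single linear independence assertion, which I will call $(\ast)$: for every field $k$, the images in $\bb_{n,k}$ of the elements $Y_{\nu,\ft}$ with $\nu\in\PP_n$ and $\ft\in\Std(\widetilde\nu)$ are $k$-linearly independent. Granting $(\ast)$, everything follows formally. By Lemma \ref{lm22} we have $\bb_{n,\Q}\cong\bigoplus_{\nu\in\PP_n}S^{\widetilde\nu}_\Q$, so the number of pairs $(\nu,\ft)$ above equals $\sum_{\nu\in\PP_n}\dim S^{\widetilde\nu}=\dim_\Q\bb_{n,\Q}=(2n-1)(2n-3)\cdots3\cdot1$, which is the $\Z$-rank of $\bb_{n,\Z}$ and hence the cardinality of the diagram basis $\BD_n$. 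Thus the matrix expressing the $Y_{\nu,\ft}$ in the basis $\BD_n$ is a square matrix over $\Z$, and $(\ast)$, applied over $\Q$ and over every $\mathbb{F}_p$, forces its determinant to be a unit of $\Z$. Therefore $\{Y_{\nu,\ft}\mid\nu\in\PP_n,\ \ft\in\Std(\widetilde\nu)\}$ is a $\Z$-basis of $\bb_{n,\Z}$, hence an $R$-basis of $\bb_{n,R}=R\otimes_\Z\bb_{n,\Z}$ for every commutative $\Z$-algebra $R$ --- this is the final assertion of the theorem. For a general $\lam\in\PP_n$, the set $B_\lam:=\{Y_{\nu,\ft}\mid\ft\in\Std(\widetilde\nu),\ \lam\unlhd\nu\in\PP_n\}$ is a subset of this $\Z$-basis, so its image in $\bb_{n,R}$ is $R$-linearly independent for every $R$; since $\CM^\lam$ is by definition the $\Z$-span of $B_\lam$, it is $\Z$-free with basis $B_\lam$, whence $R\otimes_\Z\CM^\lam$ is $R$-free with basis $\{1\otimes Y_{\nu,\ft}\}$ and the canonical map $R\otimes_\Z\CM^\lam\to\CM^\lam_R\subseteq\bb_{n,R}$ carries it onto the $R$-linearly independent family spanning $\CM^\lam_R$; so that map is an isomorphism and $B_\lam$ is an $R$-basis of $\CM^\lam_R$.

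It remains to prove $(\ast)$, and here I would simply rerun the ``highest term'' argument from the last paragraph of the proof of Proposition \ref{prop28}, with an arbitrary field $k$ in place of $\mathbb{F}_p$. Suppose $\sum_{\mu\in\PP_n}\sum_{\ft\in\Std(\widetilde\mu)}c_{\mu,\ft}\,Y_{\mu,\ft}=0$ in $\bb_{n,k}$ with not all coefficients zero. Choose $\nu\in\PP_n$ minimal with respect to $\unlhd$ among those $\mu$ for which some $c_{\mu,\ft}\neq0$, and among the $\ft\in\Std(\widetilde\nu)$ with $c_{\nu,\ft}\neq0$ choose one with $\ell(d(\ft))$ maximal; let $\sigma_\ft$ be defined by $d(\ft)\sigma_\ft=w_{\widehat\nu}$ and $\ell(w_{\widehat\nu})=\ell(d(\ft))+\ell(\sigma_\ft)$, with $w_{\widehat\nu}$ as in Lemma \ref{lm26}. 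Now apply $\star(\sigma_\ft h_{\widehat\nu})$ to the relation. Using the surjections $\pi_\mu\colon x_{\widehat\mu}\Z[\BS_{2n}]\twoheadrightarrow Y_\mu\Z[\BS_{2n}]$ provided by Lemma \ref{cor24} and Frobenius reciprocity, together with the vanishing facts ``$x_\beta w y_{\gamma'}\neq0$ only if $\gamma\unrhd\beta$'' and ``$x_\beta w y_{\beta'}\neq0$ only if $w\in\BS_\beta w_\beta$'' of \cite[(4.1)]{DJ1}, one checks that every term with $\mu\rhd\nu$ is killed and that, among the terms with $\mu=\nu$, only the one indexed by $\ft$ survives; hence the left-hand side collapses to $\pm c_{\nu,\ft}\bigl(Y_\nu\star(w_{\widehat\nu}h_{\widehat\nu})\bigr)$. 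By the second assertion of Lemma \ref{lm26}, $Y_\nu\star(w_{\widehat\nu}h_{\widehat\nu})$ is nonzero in $\bb_{n,R}$ for every commutative $\Z$-algebra $R$, in particular in $\bb_{n,k}$; so $c_{\nu,\ft}=0$, a contradiction. This establishes $(\ast)$ and with it the theorem.

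The only real obstacle is the combinatorial bookkeeping in this last step: one must check carefully that right multiplication by $\sigma_\ft h_{\widehat\nu}$ annihilates every term except the chosen one --- the terms with $\mu\rhd\nu$ being controlled by the dominance condition in \cite[(4.1)]{DJ1} and the remaining terms with $\mu=\nu$ by the coset condition $w\in\BS_{\widehat\nu}w_{\widehat\nu}$ together with the maximality of $\ell(d(\ft))$. What makes the argument work over an \emph{arbitrary} $\Z$-algebra --- in particular over $\mathbb{F}_2$, where the Brauer algebra is still defined although the orthogonal-group machinery of the earlier sections is not available --- is that one works with $h_{\widehat\nu}$ rather than $y_{\widehat{\nu}'}$: the scalar $n_{\widehat\nu}$ relating the two in Lemma \ref{lm26} need not be invertible, whereas $Y_\nu\star(w_{\widehat\nu}h_{\widehat\nu})$ stays nonzero after every base change, which is exactly what Lemma \ref{lm26} was arranged to provide.
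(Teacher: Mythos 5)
Your proof is correct, and it reorganizes the paper's deduction in a meaningful way. The paper works from the spanning side: taking $\lam=(1^n)$ in Proposition \ref{prop28} gives $\bb_n=Y_{(1^n)}\Z[\BS_{2n}]\subseteq\CM^{(1^n)}\subseteq\bb_n$, and then the count $\#\{(\nu,\ft)\}=\sum_{\nu\in\PP_n}\dim S^{\widetilde\nu}_{\Q}=\dim_\Q\bb_{n,\Q}$ from Lemma \ref{lm22} forces the spanning set of $\CM^{(1^n)}=\bb_n$ to be a $\Z$-basis, with the general-$\lam$, general-$R$ statements then following by $\Z$-freeness exactly as in your last step. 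You instead isolate linear independence over every field as the single claim $(\ast)$ and conclude by the unit-determinant argument; this is a genuinely different logical shape (independence-plus-determinant rather than spanning-plus-count), and it buys you independence from the rational, inductive first half of the proof of Proposition \ref{prop28}: the only inputs needed are the base-change-stable nonvanishing $Y_\nu\star(w_{\widehat\nu}h_{\widehat\nu})\neq0$ from Lemma \ref{lm26} and the Dipper--James vanishing criteria, which is precisely the content of the final paragraph of Proposition \ref{prop28}'s proof read as a linear-independence statement over $\mathbb{F}_p$, so your claim that it transfers to an arbitrary field is sound. Your closing remark that working with $h_{\widehat\nu}$ rather than $y_{\widehat\nu'}$ is what makes this uniform in the characteristic is exactly the right diagnosis. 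Two minor cautions: in the leading-term step the partitions $\mu$ that must be annihilated are all those with $\widehat\nu\not\unrhd\widehat\mu$, which by the minimality of $\nu$ is every $\mu\neq\nu$ occurring in the relation --- this includes partitions \emph{incomparable} to $\nu$, not only $\mu\rhd\nu$ as you wrote, though the Dipper--James criterion ``$Y_\mu\star(wy_{\widehat\nu'})\neq0$ only if $\widehat\nu\unrhd\widehat\mu$'' handles both cases uniformly, so nothing breaks; and the passage from the $y_{\widehat\nu'}$-vanishing statements to the corresponding cancellation after multiplying by $h_{\widehat\nu}$ is the bookkeeping you rightly flag --- the paper leaves it equally implicit at the same spot, so you inherit rather than introduce that gap, but a self-contained writeup of $(\ast)$ would have to spell it out instead of citing the paper's paragraph.
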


\begin{proof} We take $\lam=(1^n)$, then
  $Y_{\lam}\Z[\BS_{2n}]=\bb_n$.
It is well-known that $
\bb_{n,R}\cong R\otimes_{\Z}\bb_{n}$ for any commutative
$\Z$-algebra $R$. Applying Proposition \ref{prop28}, we get that for
any commutative $\Z$-algebra $R$, the set
$$ \Bigl\{Y_{\lam,\ft}\Bigm|\ft\in\Std(\widetilde{\lam}),\lam\in
\PP_n\Bigr\}
$$
must form an $R$-basis of $\bb_{n,R}$. By the $R$-linear
independence of the elements in this set and Corollary
\ref{cor29}, we also get that, for any partition $\lam\in 2\PP_n$,
the set $$
\Bigl\{Y_{\nu,\ft}\Bigm|\ft\in\Std(\widetilde{\nu}),\lam\unlhd\nu\in
\PP_n\Bigr\}
$$
must form an $R$-basis of $\CM_{R}^{\lam}$. Therefore, for any
commutative $\Z$-algebra $R$, the canonical map
$R\otimes_{\Z}\CM^{\lam}\rightarrow \CM_{R}^{\lam}$ is an
isomorphism.
\end{proof}

\begin{thm} \label{thm211}
For any partition $\lam\in \PP_n$ and any commutative $\Z$-algebra
$R$, we define $$
\CM_R^{\rhd\lam}:=\text{$R$-$\Span$}\Bigl\{Y_{\nu,\ft}\Bigm|\ft\in\Std(\widetilde{\nu}),\lam\lhd\nu\in
\PP_n\Bigr\}.
$$
Then $\CM_R^{\rhd\lam}$ is a right $R[\BS_{2n}]$-submodule of
$\CM_R^{\lam}$, and there is a $R[\BS_{2n}]$-module isomorphism $$
\CM_R^{\lam}/\CM_R^{\rhd\lam}\cong S_R^{\widetilde{\lam}}.
$$
In particular, $\bb_{n,R}$ has a Specht filtration.
\end{thm}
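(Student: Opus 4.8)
The plan is to establish the isomorphism over $\Z$ and to obtain the general case by base change using Theorem \ref{thm210}. From the explicit $R$-basis of $\CM_R^\lam$ in Theorem \ref{thm210} one reads off that $\CM_R^{\rhd\lam}=\sum_{\lam\lhd\nu\in\PP_n}\CM_R^\nu$ and that $\CM_R^\lam=Y_\lam R[\BS_{2n}]+\CM_R^{\rhd\lam}$; the first identity together with Corollary \ref{cor29} shows that $\CM_R^{\rhd\lam}$ is a right $R[\BS_{2n}]$-submodule of $\CM_R^\lam$. Moreover $\CM_R^\lam/\CM_R^{\rhd\lam}$ is $R$-free on the images of $\{Y_{\lam,\ft}\mid\ft\in\Std(\widetilde{\lam})\}$, hence free of rank $\#\Std(\widetilde{\lam})=\dim_\Q S_\Q^{\widetilde{\lam}}$, and $\CM^{\rhd\lam}$, being spanned by part of a $\Z$-basis of $\CM^\lam$, is a $\Z$-direct summand of $\CM^\lam$. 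So it suffices to produce a $\Z[\BS_{2n}]$-isomorphism $\CM^\lam/\CM^{\rhd\lam}\cong S^{\widetilde{\lam}}$: tensoring the $\Z$-split sequence $0\to\CM^{\rhd\lam}\to\CM^\lam\to S^{\widetilde{\lam}}\to 0$ with $R$ and invoking the base-change isomorphisms of Theorem \ref{thm210} then gives $\CM_R^\lam/\CM_R^{\rhd\lam}\cong S_R^{\widetilde{\lam}}$ for every commutative $\Z$-algebra $R$.

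Over $\Q$ I would argue by downward induction on $\lam$ in the dominance order. By Lemma \ref{lm22}, $\bb_{n,\Q}$ is, up to isomorphism, the multiplicity-free direct sum $\bigoplus_{\mu\in\PP_n}S_\Q^{\widetilde{\mu}}$ (the indexing set $(2\PP_n)'$ being exactly $\{\widetilde{\mu}\mid\mu\in\PP_n\}$). The claim is that, inside this decomposition, $\CM_\Q^\lam=\bigoplus_{\mu\unrhd\lam}S_\Q^{\widetilde{\mu}}$; the base case $\lam=(1^n)$ is $\CM_\Q^{(1^n)}=\bb_{n,\Q}$. For the inductive step, the hypothesis gives $\CM_\Q^{\rhd\lam}=\sum_{\nu\rhd\lam}\CM_\Q^\nu=\bigoplus_{\mu\rhd\lam}S_\Q^{\widetilde{\mu}}$; by Proposition \ref{prop28} we have $Y_\lam\Q[\BS_{2n}]\subseteq\CM_\Q^\lam$, and by Proposition \ref{prop27} it contains a copy of $S_\Q^{\widetilde{\lam}}$, so $\CM_\Q^\lam\supseteq\bigoplus_{\mu\unrhd\lam}S_\Q^{\widetilde{\mu}}$; since $\dim_\Q\CM_\Q^\lam=\sum_{\mu\unrhd\lam}\#\Std(\widetilde{\mu})=\sum_{\mu\unrhd\lam}\dim S_\Q^{\widetilde{\mu}}$ by Theorem \ref{thm210}, equality follows. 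In particular $\CM_\Q^\lam/\CM_\Q^{\rhd\lam}\cong S_\Q^{\widetilde{\lam}}$.

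To identify $\CM^\lam/\CM^{\rhd\lam}$ over $\Z$ with the correct integral form I would use the Murphy--Mathas Specht filtration of $x_{\widehat{\lam}}\Z[\BS_{2n}]$ (transported by $d_\lam$, as recalled just before Proposition \ref{prop28}): for the unique semistandard $\widetilde{\lam}$-tableau $\fS_0$ of type $\widetilde{\lam}$ one has $d_\lam M_{\fS_0}^{\widetilde{\lam}}=x_{\widehat{\lam}}\Z[\BS_{2n}]$ and $x_{\widehat{\lam}}\Z[\BS_{2n}]/d_\lam M_{\fS_0,\rhd}^{\widetilde{\lam}}\cong S^{\widetilde{\lam}}$. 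Composing $\pi_\lam$ with the projection $\CM^\lam\to\CM^\lam/\CM^{\rhd\lam}$ gives a surjective $\Z[\BS_{2n}]$-homomorphism $\bar\pi_\lam\colon x_{\widehat{\lam}}\Z[\BS_{2n}]\twoheadrightarrow\CM^\lam/\CM^{\rhd\lam}$ (surjective since $\CM^\lam=Y_\lam\Z[\BS_{2n}]+\CM^{\rhd\lam}$). Over $\Q$, $(\bar\pi_\lam)_\Q$ is a surjection onto a module isomorphic to $S_\Q^{\widetilde{\lam}}$; because $\#\T_0(\widetilde{\lam},\widetilde{\lam})=1$, the module $x_{\widehat{\lam}}\Q[\BS_{2n}]$ contains $S_\Q^{\widetilde{\lam}}$ with multiplicity one, so it has a unique submodule with quotient $\cong S_\Q^{\widetilde{\lam}}$, namely $(d_\lam M_{\fS_0,\rhd}^{\widetilde{\lam}})_\Q$; hence $\ker(\bar\pi_\lam)_\Q=(d_\lam M_{\fS_0,\rhd}^{\widetilde{\lam}})_\Q$. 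As the target of $\bar\pi_\lam$ is $\Z$-free, $\ker\bar\pi_\lam=x_{\widehat{\lam}}\Z[\BS_{2n}]\cap\ker(\bar\pi_\lam)_\Q\supseteq d_\lam M_{\fS_0,\rhd}^{\widetilde{\lam}}$, so $\bar\pi_\lam$ factors through a surjection $S^{\widetilde{\lam}}\twoheadrightarrow\CM^\lam/\CM^{\rhd\lam}$ of free $\Z$-modules of equal finite rank, which is an isomorphism since its kernel is torsion inside a free $\Z$-module.

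Granting $\CM_R^\lam/\CM_R^{\rhd\lam}\cong S_R^{\widetilde{\lam}}$, the Specht filtration of $\bb_{n,R}$ is read off by fixing a total order $(n)=\lam^{(1)},\dots,\lam^{(N)}=(1^n)$ of $\PP_n$ refining $\unrhd$ and setting $G_j:=\sum_{i\le j}\CM_R^{\lam^{(i)}}$; from the bases of Theorem \ref{thm210} one checks $G_{j-1}\cap\CM_R^{\lam^{(j)}}=\CM_R^{\rhd\lam^{(j)}}$, so $0=G_0\subset G_1\subset\cdots\subset G_N=\bb_{n,R}$ has successive quotients $G_j/G_{j-1}\cong\CM_R^{\lam^{(j)}}/\CM_R^{\rhd\lam^{(j)}}\cong S_R^{\widetilde{\lam^{(j)}}}$. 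I expect the main obstacle to be the $\Q$-step: pinning the relevant subquotient of $\bb_{n,\Q}$ down as exactly $S_\Q^{\widetilde{\lam}}$ rather than merely some Specht module of the same dimension, which is why Lemma \ref{lm22}, Propositions \ref{prop27} and \ref{prop28}, and the dimension count of Theorem \ref{thm210} must all be combined. Once that is settled the integral lift is essentially formal, the only delicate points being the purity identity $\ker\bar\pi_\lam=x_{\widehat{\lam}}\Z[\BS_{2n}]\cap\ker(\bar\pi_\lam)_\Q$ and the elementary fact that a surjection of free $\Z$-modules of equal finite rank is an isomorphism.
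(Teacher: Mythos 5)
Your proposal is correct and reaches the same conclusion, but the rational step is organized differently from the paper's. For the decomposition $\CM_\Q^{\lam}\cong\bigoplus_{\mu\unrhd\lam}S_\Q^{\widetilde{\mu}}$ the paper argues directly: it considers, for each $\mu$, the projection $\rho_{\mu}^{\lam}$ of $\CM_\Q^{\lam}$ onto the $S_\Q^{\widetilde{\mu}}$-isotypic component and shows $\rho_{\mu}^{\lam}\neq 0$ forces $\mu\unrhd\lam$, by hitting $\CM_\Q^{\lam}$ with $w_{\widehat{\mu}'}x_{\widehat{\mu}}w_{\widehat{\mu}}y_{\widehat{\mu}'}$ and invoking the K\"unzer--Mathas hook-length computation to rule out the nonzero case when $\mu\ntrianglerighteq\lam$; a dimension count then finishes. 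You replace that nonzero-projection argument with a downward induction on $\lam$ in the dominance order: the inductive hypothesis identifies $\CM_\Q^{\rhd\lam}=\bigoplus_{\mu\rhd\lam}S_\Q^{\widetilde{\mu}}$, Propositions \ref{prop27} and \ref{prop28} supply the extra copy of $S_\Q^{\widetilde{\lam}}$ inside $\CM_\Q^{\lam}$, and the dimension count from Theorem \ref{thm210} closes the gap. This avoids the hook-length lemma entirely, at the (mild) cost of needing the multiplicity-free decomposition of Lemma \ref{lm22} to convert a sum of submodules into a direct sum. The integral lift (factoring $\bar\pi_\lam$ through $S^{\widetilde{\lam}}$ by showing $d_\lam M_{\fS_0,\rhd}^{\widetilde{\lam}}$ dies, then a rank count) is essentially the paper's, though you conclude injectivity by the torsion-free-kernel argument rather than by tracking the standard basis as the paper does; both are sound.

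One small slip: you label $\lam=(1^n)$ as the base case of a \emph{downward} induction on dominance, but $(1^n)$ is the minimal partition and is in fact the last case. The actual base case is $\lam=(n)$, where $\CM_\Q^{\rhd(n)}=0$; your inductive step applies vacuously there and gives $\CM_\Q^{(n)}\cong S_\Q^{(n,n)}$, so nothing breaks, but the phrasing should be corrected.
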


\begin{proof}
It suffices to consider the case where $R=\Z$. We first show that $$
\CM_{\Q}^{\lam}\cong\oplus_{\lam\unlhd\mu\in
\PP_n}S_{\Q}^{\widetilde{\mu}},\quad
\CM_{\Q}^{\rhd\lam}\cong\oplus_{\lam\lhd\mu\in
\PP_n}S_{\Q}^{\widetilde{\mu}}.
$$
For each $\mu\in \PP_n$, we use $\rho_{\mu}^{\lam}$ to denote the
composite of the embedding
$\CM_{\Q}^{\lam}\hookrightarrow\bb_{n,\Q}$ and the projection
$\bb_{n,\Q}\twoheadrightarrow S_{\Q}^{\widetilde{\mu}}$. Suppose
that $\rho_{\mu}^{\lam}\neq 0$. Then $\rho_{\mu}^{\lam}$ must be a
surjection. We claim that $\mu\unrhd\lam$. In fact, if
$\mu\ntrianglerighteq\lam$, then for any $\lam\unlhd\nu\in \PP_n$,
$\mu\ntrianglerighteq\nu$, and
$x_{\widehat{\nu}}\Z[\BS_{2n}]w_{\widehat{\mu}'}x_{\widehat{\mu}}w_{\widehat{\mu}}y_{\widehat{\mu}'}=0$,
hence $Y_{\nu,\ft}\star
(w_{\widehat{\mu}'}x_{\widehat{\mu}}w_{\widehat{\mu}}y_{\widehat{\mu}'})=0$
for any $\ft\in\Std(\widetilde{\nu})$. It follows that
$\CM_{\Q}^{\lam}(w_{\widehat{\mu}'}x_{\widehat{\mu}}w_{\widehat{\mu}}y_{\widehat{\mu}'})=0$.
Therefore,
$S_{\Q}^{\widetilde{\mu}}(w_{\mu'}x_{\widehat{\mu}}w_{\widehat{\mu}}y_{\widehat{\mu}'})=0$.
On the other hand, since $S_{\Q}^{\widetilde{\mu}}\cong
x_{\widehat{\mu}}w_{\widehat{\mu}}y_{\widehat{\mu}'}\Q[\BS_{2n}]$,
and by \cite[Lemma 5.7]{KM},
$$
x_{\widehat{\mu}}w_{\widehat{\mu}}y_{\widehat{\mu}'}(w_{\widehat{\mu}'}x_{\widehat{\mu}}
w_{\widehat{\mu}}y_{\widehat{\mu}'})=\Bigl(\prod_{(i,j)\in[\widehat{\mu}]}h_{i,j}^{\widehat{\mu}}\Bigr)
x_{\widehat{\mu}}w_{\widehat{\mu}}y_{\widehat{\mu}'}\neq 0,
$$
where $h_{i,j}^{\widehat{\mu}}$ is the $(i,j)$-hook length in
$[\widehat{\mu}]$, we get a contradiction. Therefore,
$\rho_{\mu}^{\lam}\neq 0$ must imply that $\mu\unrhd\lam$. Now
counting the dimensions, we deduce that
$\CM_{\Q}^{\lam}\cong\oplus_{\lam\unlhd\mu\in
\PP_n}S_{\Q}^{\widetilde{\mu}}$. In a similar way, we can prove
that $\CM_{\Q}^{\rhd\lam}\cong\oplus_{\lam\lhd\mu\in
\PP_n}S_{\Q}^{\widetilde{\mu}}$. It follows that
$\CM_{\Q}^{\lam}/\CM_{\Q}^{\rhd\lam}\cong
S_{\Q}^{\widetilde{\lam}}$.

We now consider the natural map from
$x_{\widehat{\lam}}\Z[\BS_{2n}]$ onto $\CM^{\lam}/\CM^{\rhd\lam}$.
Since $\Q\otimes_{\Z}d_{\lam}M_{\fS_0,\rhd}^{\widetilde{\lam}}$
does not contain $S_{\Q}^{\widetilde{\lam}}$ as a composition
factor, it follows that (by Proposition \ref{prop28}) the image of
$d_{\lam}M_{\fS_0,\rhd}^{\widetilde{\lam}}$ must be $0$. Therefore
we get a surjective map from $S^{\widetilde{\lam}}$ onto
$\CM^{\lam}/\CM^{\rhd\lam}$. This map sends the standard basis of
$S^{\widetilde{\lam}}$ to the canonical basis of
$\CM^{\lam}/\CM^{\rhd\lam}$. So it must be injective as well, as
required.
\end{proof}
\bigskip\bigskip

\section{The second main result}
\medskip

In this section, we shall use Theorem \ref{mainthm1} and the results
obtained in Section 6 to give an explicit and characteristic-free
description of the annihilator of the $n$-tensor space $V^{\otimes
n}$ in the Brauer algebra $\bb_n(m)$.
\medskip

Let $K$ be an arbitrary infinite field of odd characteristic. Let
$m, n\in\mathbb{N}$. Let $V$ be the $m$-dimensional
orthogonal $K$-vector space we introduced before. Let
$O({V})$ be the corresponding orthogonal group, acting
naturally on $V$, and hence on the $n$-tensor space $V^{\otimes n}$
from the left-hand side. As we mentioned in the introduction, this
left action on $V^{\otimes n}$ is centralized by the specialized
Brauer algebra $\bb_n(m)_K:=K\otimes_{\Z}\bb_n(m)$, where $K$ is
regarded as $\Z$-algebra in a natural way. The Brauer algebra
$\bb_n(m)_K$ acts on $n$-tensor space $V^{\otimes n}$ from the
right-hand side. Let $\varphi$ be the natural $K$-algebra
homomorphism $$
\varphi:(\bb_n(m)_K)^{\mathrm{op}}\rightarrow\End_{K}\bigl(V^{\otimes
n}\bigr).
$$
Recall (see (\ref{cano})) that there is an isomorphism
$\End_{K}\bigl(V^{\otimes n}\bigr)\cong \bigl(V^{\otimes
2n}\bigr)^{\ast}$, and the place permutation action of the
symmetric group $\BS_{2n}$ on $V^{\otimes 2n}$ naturally induces an
action on $\End_{K}\bigl(V^{\otimes n}\bigr)\cong\bigl(V^{\otimes
2n}\bigr)^{\ast}$. By Lemma \ref{keydes}, we know that $\varphi$ is
a $\BS_{2n}$-module homomorphism. Using the
$O_m(\mathbb{C})$-$\mathbb{C}\BS_{2n}$-bimodule decomposition of
$V_{\mathbb{C}}^{\otimes 2n}$ (cf. \cite{KT}), it is easy to check
that $$
\dim\Bigl(V_{\mathbb{C}}^{\otimes 2n}\Bigr)^{O_m(\mathbb{C})}=\sum_{\substack{\lam\in(2\PP_n)'\\
\lam_1\leq m}}\dim S^{\lam}.
$$
Now applying Theorem \ref{mainthm1}, Lemma \ref{key2} and our
previous discussion, we deduce that

\begin{lem} \label{lm32} With the notations as
above, we have that $$ \dim(\Ker\varphi)=\sum_{\substack{\lam\in(2\PP_n)'\\
\lam_1>m}}\dim S^{\lam}.
$$
\end{lem}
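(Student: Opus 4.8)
\textbf{Proof proposal for Lemma \ref{lm32}.}

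The plan is to identify $\dim(\Ker\varphi)$ by combining the three pieces of information now available: the $\BS_{2n}$-equivariant decomposition of the Brauer algebra (Lemma \ref{lm22}), the fact that $\varphi$ is a $\BS_{2n}$-module homomorphism (Lemma \ref{keydes}), and the known dimension of the $O(V)$-invariants in $V^{\otimes 2n}$ over $\mathbb{C}$ (the Kudla--Tanisaki type result quoted as \cite{KT}), together with the characteristic-independence of $\dim\End_{KO(V)}(V^{\otimes n})$ coming from Theorem \ref{mainthm1} and Lemma \ref{key2}. Concretely, via the isomorphism \eqref{cano} we view $\varphi$ as a $K[\BS_{2n}]$-module map from $\bb_{n,K}(m)$ (with the $\star$-action) to $\bigl(V_K^{\otimes 2n}\bigr)^{\ast}$, whose image is $\End_{KO(V)}\bigl(V^{\otimes n}\bigr)$ by Theorem \ref{mainthm1}(b). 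Hence $\dim(\Ker\varphi)=\dim\bb_n(m)-\dim\End_{KO(V)}\bigl(V^{\otimes n}\bigr)$.

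First I would compute the right-hand dimension over $\mathbb{C}$, where the representation theory is semisimple. By Lemma \ref{lm22}, $\bb_{n,\mathbb{C}}\cong\bigoplus_{\lam\in 2\PP_n}S_{\mathbb{C}}^{\lam'}$ as a $\mathbb{C}[\BS_{2n}]$-module, i.e.\ $\bigoplus_{\mu\in(2\PP_n)'}S_{\mathbb{C}}^{\mu}$, each with multiplicity one. On the target side, the stated isomorphism $\End_{\mathbb{C}}\bigl(V_{\mathbb{C}}^{\otimes n}\bigr)\cong\bigl(V_{\mathbb{C}}^{\otimes 2n}\bigr)^{\ast}$ is $\BS_{2n}$-equivariant, and the image of $\varphi$ is $\bigl(V_{\mathbb{C}}^{\otimes 2n}\bigr)^{O_m(\mathbb{C})}$ regarded inside the dual. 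The decomposition of $V_{\mathbb{C}}^{\otimes 2n}$ as an $O_m(\mathbb{C})$-$\mathbb{C}\BS_{2n}$-bimodule (cf.\ \cite{KT}) shows that the multiplicity of the trivial $O_m(\mathbb{C})$-module is the sum over exactly those $\mu\in(2\PP_n)'$ with $\mu_1\le m$ (the first-column condition $\lam'_1+\lam'_2\le m$ of Lemma \ref{B1}, transposed), each contributing one copy of $S^{\mu}$; this is precisely the displayed formula $\dim\bigl(V_{\mathbb{C}}^{\otimes 2n}\bigr)^{O_m(\mathbb{C})}=\sum_{\lam\in(2\PP_n)',\ \lam_1\le m}\dim S^{\lam}$ stated just above the lemma. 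Since every irreducible $\BS_{2n}$-constituent of $\bb_{n,\mathbb{C}}$ occurs with multiplicity one and $\varphi$ is $\BS_{2n}$-equivariant and surjective onto the invariants, Schur's lemma forces $\varphi$ to kill exactly the isotypic components $S_{\mathbb{C}}^{\mu}$ with $\mu_1>m$; therefore $\dim(\Ker\varphi_{\mathbb{C}})=\sum_{\lam\in(2\PP_n)',\ \lam_1>m}\dim S^{\lam}$.

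Then I would remove the hypothesis $K=\mathbb{C}$. By Theorem \ref{thm210}, $\dim_K\bb_n(m)$ is independent of $K$ (it is just the rank of the Brauer algebra), and by Theorem \ref{mainthm1}(b) together with Lemma \ref{key2} the dimension of $\End_{KO(V)}\bigl(V^{\otimes n}\bigr)$ is independent of the infinite field $K$ of odd characteristic; hence so is $\dim(\Ker\varphi)=\dim_K\bb_n(m)-\dim\End_{KO(V)}\bigl(V^{\otimes n}\bigr)$. Evaluating at $K=\mathbb{C}$ gives the claimed formula for all such $K$.

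The main obstacle is pinning down, cleanly and in a self-contained way, the first-column bound ``$\lam_1>m$'' in the answer: one must correctly match the partition $\lam\in(2\PP_n)'$ appearing in the $\BS_{2n}$-decomposition of $\bb_{n,\mathbb{C}}$ with the partition indexing an $O_m(\mathbb{C})$-$\mathbb{C}\BS_{2n}$-bimodule constituent of $V_{\mathbb{C}}^{\otimes 2n}$, and verify that the condition for the corresponding $O_m(\mathbb{C})$-module to be trivial is exactly $\lam_1\le m$ (equivalently, $\lam'_1+\lam'_2\le m$ in the notation of Lemma \ref{B1}). This is a bookkeeping step comparing the conventions in \cite{KT} with the $2\PP_n$/conjugation conventions of Section 6, and once it is done the rest is Schur's lemma plus the base-change invariance already established.
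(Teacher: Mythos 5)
Your argument is correct and is essentially the same as the paper's: identify $\dim(\Ker\varphi)=\dim\bb_n(m)-\dim\End_{KO(V)}(V^{\otimes n})$ using Theorem \ref{mainthm1}(b), compute the right-hand side over $\mathbb{C}$ from the Koike--Terada bimodule decomposition together with Lemma \ref{lm22}, and then transfer to arbitrary $K$ via the characteristic-independence established in Lemma \ref{key2}. The Schur's-lemma digression identifying \emph{which} isotypic components are killed is not needed for the dimension count but does no harm.
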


We remark that when $K=\mathbb{C}$, the above result was deduced in
the work of \cite{LP} and \cite[Proposition 1.6]{Ga1}.
\bigskip\bigskip

\noindent {\bf Proof of Theorem \ref{mainthm2}:} For any
$\lam\in\PP_n$, it is easy to see that $\lam\unrhd (m+1,1^{n-m-1})$
if and only if $\lam_1>m$. Therefore, by Theorem \ref{thm210} and
Lemma \ref{lm32}, $\dim\Ker\varphi=\dim\CM_{K}^{(m+1,1^{n-m-1})}$.
Furthermore, by Lemma \ref{lm32}, $\Ker\varphi$ is a
$\BS_{2n}$-submodule of $\bb_n(m)$. Therefore, by Lemma \ref{lm32},
to prove the theorem, it suffices to show that
$Y_{\lam}\in\Ker\varphi$ for each partition $\lam\in\PP_n$
satisfying $\lam_1>m$.

By definition,
$Y_{\lam}:=Y_{(\lam_1)}^{(0)}Y_{(\lam_2)}^{(\lam_1)}\cdots
Y_{(\lam_{s})}^{(\lam_1+\lam_2+\cdots+\lam_{s-1})}$. By \cite{Ha},
we know that $Y_{(\lam_1)}^{(0)}\in\Ker\varphi$ whenever $\lam_1>m$.
It follows that $Y_{\lam}\in\Ker\varphi$ as required. This completes
the proof of the theorem.

\bigskip\bigskip\bigskip\bigskip

\bibliographystyle{amsplain}

\affiliationone{Stephen Doty\\
Department of Mathematics and Statistics\\
Loyola University Chicago\\
6525 North Sheridan Road\\
Chicago IL 60626\\
USA \email{doty@math.luc.edu}}

\affiliationone{Jun Hu\\
Department of Applied Mathematics\\
Beijing Institute of Technology\\
Beijing 100081\\
P.R. China\\
\email{junhu303@yahoo.com.cn}}
\end{document}